\newtheorem{definition}{Definition}[section]
\newtheorem{theorem}{Theorem}[section]
\newtheorem{proposition}[theorem]{Proposition}
\newtheorem{lemma}[theorem]{Lemma}
\newtheorem{remark}{Remark}[section]
\renewcommand{\le}{\leqslant}
\renewcommand{\ge}{\geqslant}
\newcommand{\ind}{\mathds{1}}
\newcommand{\eps}{\varepsilon}
\def\qed{ \hfill $\blacksquare$}
    \let\eps=\varepsilon
     \let\gl=\lambda
\newcommand{\cL}{\mathcal{L}}
\newcommand{\va}{\mathbf{a}}\newcommand{\vb}{\mathbf{b}}
\newcommand{\vp}{\mathbf{p}}
\newcommand{\vx}{\mathbf{x}}
\newcommand{\vy}{\mathbf{y}}
\newcommand{\bR}{\mathbb{R}}
\newcommand{\bZ}{\mathbb{Z}}
\newcommand{\dR}{\mathds{R}}
\newcommand{\wb}{w^{(b)}}
\newcommand{\ellb}{\ell^{(b)}}
\DeclareMathOperator{\E}{\mathds{E}}
\DeclareMathOperator{\pr}{\mathds{P}}
\DeclareMathOperator{\var}{Var}
\newcommand{\bes}{\begin{equation*}}
\newcommand{\ees}{\end{equation*}}
\newcommand{\be}{\begin{equation}}
\newcommand{\ee}{\end{equation}}
\begin{document}


\keywords{}

\author[Dey]{Partha S.~Dey}
\address{Partha Dey, Department of Mathematics, University of Illinois at Urbana-Champaign, 1409 W.~Green St, Urbana, IL 61801, U.S.A.,  email: {\tt psdey@illinois.edu }}

\author[Joseph]{Mathew Joseph}
\address{Mathew Joseph, Indian Statistical Institute Bangalore, 8th Mile Mysore Road, RVCE Post, Bengaluru 560059, email: {\tt m.joseph@isibang.ac.in}}

\author[Peled]{Ron Peled}
\address{Ron Peled, 231 Schreiber Building, School of Mathematical Sciences, Tel Aviv University, Ramat Aviv, Tel Aviv 69978,
Israel, email: {\tt peledron@post.tau.ac.il }}

\title{Longest increasing path within the critical strip}

\begin{abstract}
A Poisson point process of unit intensity is placed in the square $[0,n]^2$. An increasing path is a curve connecting $(0,0)$ with $(n,n)$ which is non-decreasing in each coordinate. Its length is the number of points of the Poisson process which it passes through. Baik, Deift and Johansson proved that the maximal length of an increasing path has expectation $2n-n^{1/3}(c_1+o(1))$, variance $n^{2/3}(c_2+o(1))$ and that it converges to the Tracy-Widom distribution after suitable scaling. Johansson further showed that all maximal paths have a displacement of $n^{\frac23+o(1)}$ from the diagonal with probability tending to one as $n\to \infty$. Here we prove that the maximal length of an increasing path restricted to lie within a strip of width $n^{\gamma},  \gamma<\frac23$, around the diagonal has expectation $2n-n^{1-\gamma+o(1)}$, variance $n^{1 - \frac{\gamma}{2}+o(1)}$ and that it converges to the Gaussian distribution after suitable scaling.
\end{abstract}

\maketitle

\section{Introduction}\label{sec:intro}
The problem of determining the distribution of the length $\tilde{L}_n$ of the longest increasing subsequence in a random uniform permutation of $\{1,2,\cdots, n\}$ was first posed by Ulam and considered by Hammersley in his seminal paper~\cite{hamm}.  It has since attracted a lot of attention in the mathematical community; see Romik's book~\cite{romi} for a lively history of this problem and the remarkable progress that has been made since it was first introduced. Using the subadditive ergodic theorem,  Hammersley~\cite{hamm} was able to show the existence of a constant $c$ such that $ \tilde{L}_n/ \sqrt n \to c$ in probability as $n \to \infty$. A few years later, Logan and Shepp~\cite{loga-shep-77}, and Vershik and Kerov~\cite{vers-kero-77} were able to compute the exact value $c=2$ using an analysis of random Young tableaux. A different proof was provided by Aldous and Diaconis~\cite{aldo-diac95} by studying of the hydrodynamical limit of an interacting particle system implicit in the work of Hammersley, which is now called the Hammersley process in his honor.

The above results can be considered as a law of large numbers for $\tilde L_n$. The limiting distribution for $\tilde L_n$, under appropriate centering and scaling, was found to be the Tracy-Widom distribution in the landmark paper by Baik, Deift and Johansson \cite{baik-deif-joha-99}. The paper is a tour-de-force using the Robinson-Schensted-Knuth correspondence between permutations and Young tableaux, and elements of Riemann-Hilbert theory. It is convenient to work with the following equivalent formulation of the problem, as done already by Hammersley~\cite{hamm} and as we will do in the rest of the paper. Consider a Poisson point process of unit intensity in the square $[0,n]^2$. Let $P_n$ be the (random) number of points inside the square. An increasing path is a curve connecting $(0,0)$ with $(n,n)$ which is non-decreasing in each coordinate. Its \emph{length} is the number of points of the Poisson process which it passes through. Denote by $L_n$ the length of the longest increasing path. The relation between $L_n$ and $\tilde{L}_n$ is given by the fact, which is straightforward to check directly, that conditioned on $P_n = k$, the distribution of $L_n$ equals the distribution of $\tilde L_{k}$. One can therefore obtain the limiting distribution of $\tilde L_{P_n}$ by studying the {\it Poissonized} problem, and obtain similar results for $\tilde L_n$ via a de-Poissonization argument. Among the key findings of~\cite{baik-deif-joha-99} is that, as $n\to\infty$,
\begin{align}
  &\E L_n = 2n - n^{1/3}(c_1 + o(1)),\label{eq:unrestricted_exp}\\
  &\var L_n = n^{2/3}(c_2 + o(1)),\label{eq:unrestricted_var}\\
  &\frac{L_n - \E L_n}{\sqrt{\var L_n}}\text{ converges in distribution to the Tracy-Widom distribution}
\end{align}
with $c_1,c_2>0$ absolute constants.

In a subsequent development, Johansson \cite{joha-ptrf-00} showed using a geometric argument that the {\it transversal exponent} of the longest increasing path is $\frac 23$. More precisely, as the longest increasing path in $[0,n]^2$ need not be unique, it is shown there that with probability tending to $1$ as $n\to \infty$ all the longest increasing paths have a displacement of order $n^{\frac23 +o(1)}$ around the diagonal. The result raises the question of studying the maximal length attainable for increasing paths restricted to lie closer to the diagonal. Addressing this question is the main purpose of this paper.

Let $0< \gamma<\frac{2}{3}$ and set $L_n^{(\gamma)}$ to be the maximal length of an increasing path restricted to lie in $[0,n]^2\cap\{(x,y)\colon |y-x|\le n^{\gamma}\}$. As $\gamma$ decreases, the length $L_n^{(\gamma)}$ corresponds to a maximum over a smaller set. It is then clear that $L_n^{(\gamma)}$ decreases and one may further
\begin{theorem}\label{thm:strip} Fix $0< \gamma<\frac23$.  We have the following asymptotic behavior as $n\to \infty$:
\be \begin{split} \label{eq:e:var}
\E L_n^{(\gamma)} &= 2n - n^{1-\gamma +o(1)}, \\
\var L_n^{(\gamma)} &= n^{1-\frac{\gamma}{2} +o(1)}
\end{split}\ee
and
\be \label{eq:gauss:lim}
\frac{L_n^{(\gamma)}-\E L_n^{(\gamma)}}{\sqrt{\var{L_n^{(\gamma)}}}} \Rightarrow N(0,1).
\ee
\end{theorem}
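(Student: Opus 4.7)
The natural scale for the problem is $k_*=n^{3\gamma/2}$, chosen so that the BDJ transversal fluctuation $k_*^{2/3}$ matches the strip width $n^\gamma$. Tiling the diagonal with $m_*=n/k_*=n^{1-3\gamma/2}$ blocks on this scale gives per-block BDJ deficit $k_*^{1/3}=n^{\gamma/2}$ and variance $k_*^{2/3}=n^\gamma$, which sum to the claimed totals $n^{1-\gamma}$ and $n^{1-\gamma/2}$. The hypothesis $\gamma<\tfrac23$ is precisely what forces $m_*\to\infty$, enabling a central limit theorem; at $\gamma=\tfrac23$ one has $m_*=1$ and the exponents reduce to the BDJ ones.

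For the lower bound, take $k=n^{3\gamma/2}/\log^{c}n$ for a suitable constant $c$, tile the diagonal with disjoint squares $Q_i=[(i-1)k,ik]^2$ for $1\le i\le m=n/k$, and let $M_i$ denote the length of the longest increasing chain from $((i-1)k,(i-1)k)$ to $(ik,ik)$ in $Q_i$. By \eqref{eq:unrestricted_exp}--\eqref{eq:unrestricted_var} applied within $Q_i$, $\E M_i=2k-c_1k^{1/3}(1+o(1))$ and $\var M_i=c_2k^{2/3}(1+o(1))$. By Johansson's transversal fluctuation theorem applied in $Q_i$, the maximising chain stays within $\{|y-x|\le k^{2/3}\log^{O(1)}k\}\subset\{|y-x|\le n^\gamma\}$ with probability $1-o(1)$ (with $c$ chosen so the polylog is absorbed). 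On this event the box chains concatenate into a valid in-strip path, whence $L_n^{(\gamma)}\ge\sum_i M_i$; independence of the $M_i$ (Poisson spatial independence) and summation give $\E L_n^{(\gamma)}\ge 2n-n^{1-\gamma+o(1)}$ and $\var L_n^{(\gamma)}\ge n^{1-\gamma/2-o(1)}$.

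The matching upper bound is the crux. A naive slab decomposition into rectangles of shape $k\times(k+2n^\gamma)$ is too lossy, since each has BDJ mean $\approx 2k+2n^\gamma$, contributing an excess $\Theta(n^{1-\gamma/2})$ that dominates the sought deficit $n^{1-\gamma}$ once $\gamma>\tfrac13$. Instead I would stratify any in-strip chain by its $y$-coordinate $y_j$ at each crossing of $x=jk$ (so $|y_j-jk|\le n^\gamma$), discretise the $y_j$'s on a grid of scale $n^\gamma$ yielding $O(C^m)$ configurations with $C$ a small constant, and apply a point-to-point BDJ bound between consecutive gates. Writing $\delta_j:=y_{j+1}-y_j-k$, the concavity $2\sqrt{k(k+\delta_j)}\le 2k+\delta_j$ makes the sum of means telescope (using $\sum_j\delta_j=y_m-y_0-mk=0$) down to $2n-mc_1k^{1/3}(1+o(1))=2n-n^{1-\gamma+o(1)}$; Tracy--Widom concentration for the $M_i$ combined with the $O(C^m)$ union bound confines fluctuations to scale $n^{1-\gamma}\sqrt{\log C}$, which is strictly smaller than the deficit for $C$ chosen small enough (and is in any case $o(n^{1-\gamma})$ in comparison to the more favourable heavy-tail regime $n^{2/3-\gamma/2}\ll n^{1-\gamma}$, the last inequality using $\gamma<\tfrac23$). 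This yields $\E L_n^{(\gamma)}\le 2n-n^{1-\gamma-o(1)}$. The Gaussian limit \eqref{eq:gauss:lim} follows from the classical CLT applied to the i.i.d.\ sum $\sum_i M_i$ at scale $\sqrt{mk^{2/3}}=n^{(1-\gamma/2)/2}$ (Tracy--Widom summands having all moments), together with a concentration argument showing $L_n^{(\gamma)}-\sum_i M_i=o_{\pr}(n^{(1-\gamma/2)/2})$, which transfers the CLT. The main obstacle is the upper bound and the tightness of the CLT transfer, both of which operate at the marginal scale $k_*$ where the strip width equals the natural BDJ transversal fluctuation; the analysis only closes because the telescoping identity $\sum\delta_j=0$ eliminates the leading $y$-deviations and $\gamma<\tfrac23$ leaves strict room between the fluctuation and deficit scales.
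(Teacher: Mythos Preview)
Your scales are right and your expectation lower bound is fine, but two of the steps you flag as routine are in fact the whole difficulty, and as written they do not go through.

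\textbf{Variance lower bound.} From $L_n^{(\gamma)}\ge\sum_i M_i$ (on the good transversal event) you correctly get $\E L_n^{(\gamma)}\ge\E\sum_i M_i$, but you cannot conclude $\var L_n^{(\gamma)}\ge\var\sum_i M_i$: a pointwise inequality carries no variance information. Nothing in your argument prevents $L_n^{(\gamma)}$ from being a heavily smoothed functional of the $M_i$ with much smaller variance.

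\textbf{CLT transfer and variance upper bound.} You assert $L_n^{(\gamma)}-\sum_i M_i=o_{\pr}\bigl(n^{(1-\gamma/2)/2}\bigr)$ without proof. Your own bounds do not deliver this: the expectation argument controls $\E[L_n^{(\gamma)}-\sum_i M_i]$ only up to $n^{1-\gamma+o(1)}$, and your union bound over $O(C^m)$ gate configurations confines fluctuations only to scale $n^{2/3-\gamma/2}$ (the heavy-tail regime you mention). For every $\gamma<\tfrac23$ one has
\[
1-\gamma \;>\; \tfrac23-\tfrac{\gamma}{2} \;>\; \tfrac12-\tfrac{\gamma}{4}=\tfrac12\bigl(1-\tfrac{\gamma}{2}\bigr),
\]
so both scales strictly exceed the standard deviation. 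The maximal in-strip path has no reason to pass near the corners $(ik,ik)$; bounding how much it gains by not doing so is exactly the crux. Without this, neither the variance upper bound $n^{1-\gamma/2+o(1)}$ nor the Gaussian limit follows.

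The paper fills this gap with a \emph{regeneration-point} construction: in blocks of length $\approx n\sqrt{\log n}$ (with $n\approx w^{3/2}$ the critical scale) it proves, via a van den Berg--Kesten disjoint-occurrence argument, that with probability at least $n^{-\epsilon}$ there is a single Poisson point $\mathbf p$ through which \emph{every} restricted geodesic between \emph{arbitrary} left and right boundary points passes. Conditioning on alternate blocks then yields an \emph{exact} decomposition of $L^R$ into independent inter-regeneration pieces, which is what drives the variance lower bound and the Lindeberg CLT. The moment upper bounds come from a separate recursive dyadic splitting that controls $\Delta(R)=\max_{\mathbf a,\mathbf b}L^R(\mathbf a,\mathbf b)-\min_{\mathbf a,\mathbf b}L^R(\mathbf a,\mathbf b)$ on short buffer blocks, not from a union bound over gate positions. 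Your telescoping identity $\sum_j\delta_j=0$ is a clean way to see the expectation upper bound, but it does not supply the independence structure the variance and CLT need.
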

The Theorem is further refined by the results in Section~\ref{sec:results} below.

The Gaussian fluctuations of the length of the longest increasing path inside the strip $n^{\frac23-\epsilon}$ are remarkably  different from the Tracy-Widom fluctuations of the length of the longest path within the strip $n^{\frac23+\epsilon}$. A tantalizing problem would be to study the transition when paths within the critical strip, having width of order $n^{\frac23}$, are considered.

The heuristic for the proof is as follows. Partition the strip $[0,n]^2\cap\{(x,y)\colon |y-x|\le n^{\gamma}\}$ into $n^{1-\frac{3\gamma}{2}+o(1)}$ diagonally-aligned rectangles (which we shall call {\it blocks}) of length $n^{\frac{3\gamma}{2}+o(1)}$ (the strip is partitioned apart from initial and final portions which are small enough to be ignored; see Figure~\ref{fig:partition of strip for main theorem}). The length of the longest increasing path within each block obeys the scaling given by \eqref{eq:unrestricted_exp}, \eqref{eq:unrestricted_var} as the height of each block is larger than the $2/3$-power of its length, allowing for transversal fluctuations which are essentially unrestricted due to the result of \cite{joha-ptrf-00}. If the length of the longest increasing path (LIP) in the entire strip was  the sum of the lengths of the LIPs within each block, then we would have a sum of i.i.d. random variables and we could apply the Lindeberg-Feller theorem to get our result. However, this is not true -- the restriction of a LIP (in the entire strip) to a block need not be maximal within this block. A key ingredient in our proof is Theorem \ref{thm:Om}, as a consequence of which we are able to show the existence of many blocks with {\it regeneration points} (these are the points $\mathbf{p}$ in \eqref{eq:Om}). A longest increasing path within the entire strip is obtained by concatenating the longest increasing paths between successive regeneration points. The regeneration points provide the necessary independence structure required for a Gaussian limit.

A model related to the above is directed last passage percolation on the two-dimensional integer lattice. One starts with a collection $\{Y_x\}_{x\in \bZ_{\ge 0}^2}$ of i.i.d. random variables, and is interested in studying $G_{m,n}= \max_{\pi} \sum_{x\in \pi} Y_x$, where the maximum is over all up-right paths starting from the origin and terminating at $(m,n)$. A law of large numbers and a Tracy-Widom limit for $G_{n,n}$ have been obtained for the case of exponential and geometric weights $Y_x$ (\cite{joha}, see the lecture notes \cite{sepp-lect}); the memory-less property of these distributions is used crucially. The model with exponential weights is connected to the totally asymmetric simple exclusion process (TASEP) with step initial condition (a particle on every $x \in \bZ_{<0}$). The random variable $G_{m,n}$ has the same distribution as the time it takes for the $m$th particle to move $n$ places to the right (see \cite{sepp-lect}). One might ask if a result similar to Theorem \ref{thm:strip} holds for $G_{n,n}^{(\gamma)}$, where we change the definition of $G_{n,n}$ so that $\pi$ lies within a strip of width $n^{\gamma}$ around the diagonal. While we have not attempted to do so, our arguments rely mainly on moderate deviation bounds of the type given in Lemma \ref{lem:E:Var} and  Lemma \ref{lem:L:tail} and may adapt for other models where similar bounds are known.

Let us also mention the following equivalent formulation of $G_{m,n}^{(\gamma)}$. Consider TASEP with particles to the left of the origin and a source with infinitely many particles at $-[n^{\gamma}]$. Particles follow the same rules as ordinary TASEP except that now the particles at the source jump (in order) to the site $-[n^{\gamma}]+1$ after it becomes vacant. There is a sink at $[n^{\gamma}]$ in which all particles eventually fall. It can then be argued that the distribution of $G_{m,n}^{(\gamma)}$ is the same as that of the time it takes for the $m$th particle (which might now start from the source) to make $n$ steps to the right.

The literature on first/last passage percolation has several results of relevance to this work. For (undirected) first passage percolation on $\bZ^d$ with weights $Y_x$ having all moments, \cite{chatt-dey} proved a Gaussian  central limit theorem for the first passage time when the paths are restricted to thin rectangles of the form $[-h_n,h_n]\times[-n,n]^{d-1}$ where $h_n =o(n^{1/(d+1)})$.  Under natural but unproven assumptions they are able to extend their result up to $h_n=n^{\xi'}$ where $\xi'<\xi$, the transversal exponent (see \cite{chat}, \cite{auff-damr}).  For directed last passage percolation on $\bZ_{\ge 0}^2$ and for $Y_x$ having all moments, one gets a Tracy-Widom limit for the last passage time for rectangles of the form $[0,n]\times [0,n^a]$ where $a<\frac37$ (\cite{baik-suid}, \cite{suid}, \cite{bodi-mart}). This is remarkably different from the Gaussian behavior of first passage percolation. In our present set-up, it is easy to see by a scaling argument that the behavior of the LIP in any rectangle with sides parallel to the axes is the same as the behavior of the LIP in a square of the same area,  and we would have a Tracy-Widom limit. With similar scaling, Theorem \ref{thm:strip} implies a Gaussian limit for the LIP in any inclined rectangle if its width is sufficiently smaller than its length. Another result which deals with the length of the LIP in non-square domains is that of \cite{baik-rain}. They consider the length of the LIP in a right-angled triangle with extra points distributed uniformly on the hypotenuse.  Using non-geometric arguments that link the model with random matrix theory, they are able to show different limiting behaviors depending on the number of points on the diagonal. Lastly, we mention a recent result of \cite{basu-sly-sido} where they prove a law of large numbers for the length of the LIP  in a square with points from a Poisson point process of intensity $\lambda$ added to the diagonal. Using geometric arguments of a similar flavor as ours, they are able to show that the limiting constant is strictly greater than $2$ for any $\lambda>0$. The analogue of this for the last passage percolation model is called the {\it slow bond} problem.

As an interesting aside, we note that the study of the longest increasing subsequence in a restricted geometry also arises in application areas; see~\cite{bachmat2006analysis}, \cite[Chapter 3]{bachmat2014mathematical} for an application to airplane boarding times.

\subsection{Notation and Main Results}\label{sec:results}

We consider a Poisson process of unit intensity on $\dR^2$.
Instead of working with the usual $(x,y)$ coordinates in $\dR^2$, we
shall use diagonal coordinates $(t,s)$. Here $t$ measures the
distance along the line $y=x$ and $s$ measures the distance along
the line $y=-x$. The correspondence between the coordinate systems is
\begin{equation*}
  \begin{split}
    &x = \frac{t-s}{\sqrt 2},\quad y = \frac{t+s}{\sqrt 2}\qquad\text{ and } \qquad
    t = \frac{x+y}{\sqrt 2},\quad s = \frac{y-x}{\sqrt 2}.
  \end{split}
\end{equation*}
Thus a diagonal rectangle $R$ of length $t_0$ and
width $s_0$ with lower-left corner at $(0,0)$ is the region
$\{(t,s)\colon 0\le t\le t_0,\, 0\le s\le s_0\}$ (see Figure~\ref{fig:coord}). From now on we will use always the $(t,s)$ coordinate system and thus an increasing path will refer to a path in the $t$--$s$ plane whose slope at every point is in $[-1,1]$ (in the sense that $\frac{s_2 - s_1}{t_2-t_1}\in[-1,1]$ for pairs of distinct points $(t_1, s_1)$, $(t_2, s_2)$ along the path).
\begin{figure}[htbp]
   \centering
   \includegraphics[width=250pt]{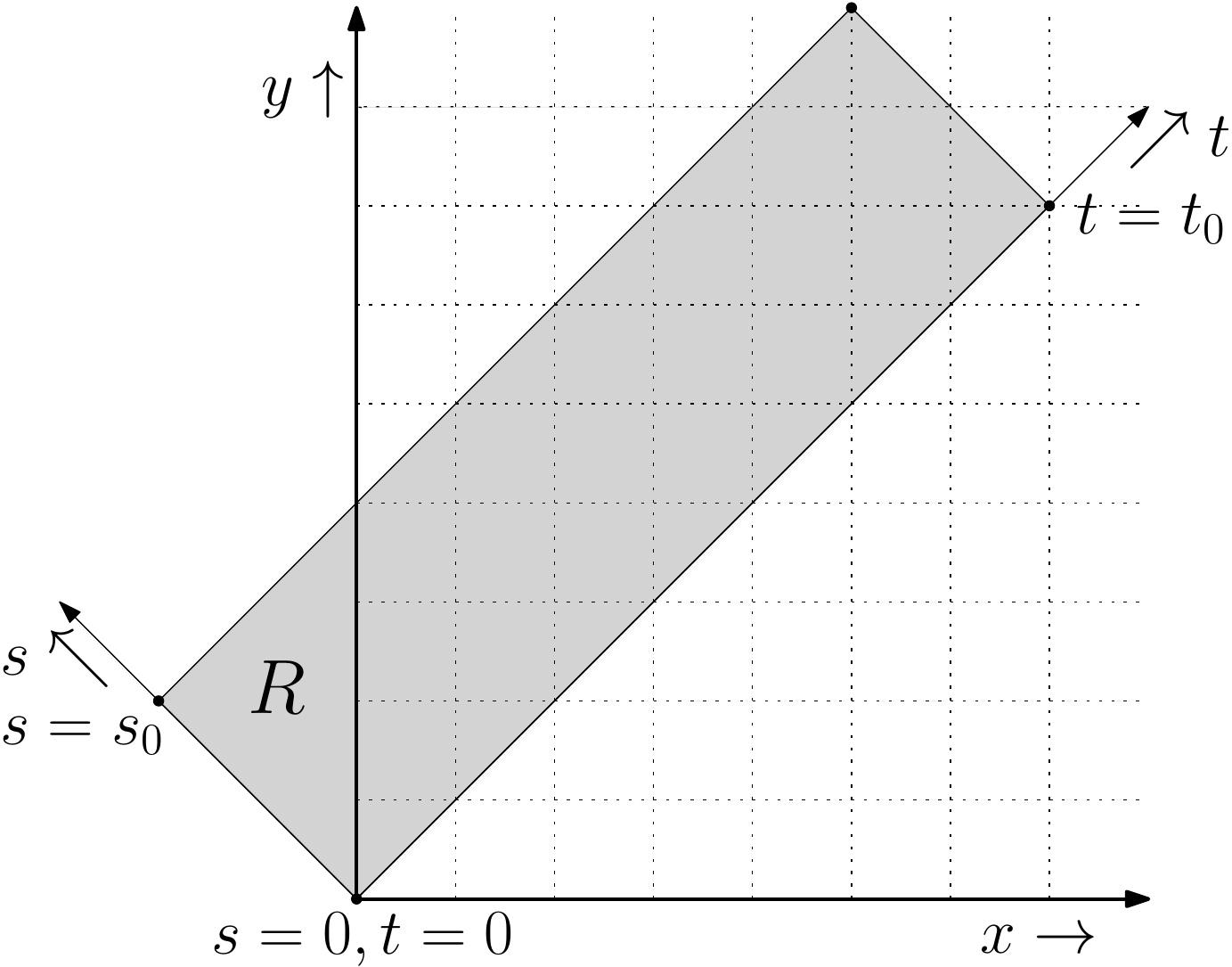}
      \caption{Diagonal coordinate system with diagonal rectangle $R$}
   \label{fig:coord}
\end{figure}

Denote by $\cL\big( (t_1,s_1), (t_2,s_2)\big)$ the collection of LIPs between points $(t_1,s_1)$ and $(t_2,s_2)$ in the $t$--$s$ plane. The length of, that is  the number of points on, any LIP in this collection is denoted by $L\big((t_1,s_1),(t_2,s_2)\big)$ and its expectation by $E\big((t_1,s_1),(t_2,s_2)\big)$. For a fixed domain $R$ (e.g.~a diagonal rectangle), $\cL^R\big((t_1,s_1),(t_2,s_2)\big)$ will denote the collection of LIPs between $(t_1,s_1)$ and $(t_2,s_2)$ constrained to lie within the domain $R$. We will use the notation $L^R\big((t_1,s_1),(t_2,s_2)\big)$ for the corresponding length and $E^R\big((t_1,s_1),(t_2,s_2)\big)$ for the expectation of this length. We write $L^R$ to denote the length of the LIP in $R$ when allowing arbitrary initial and end points in $R$.
We shall use the shorthand
\bes
\begin{split}
L_t &:= L\big((0,0),(t,0)\big) \qquad\text{ and }\qquad
L_{t,s} := L \big((0,0),(t,s)\big).
\end{split}
\ees
Note that $L_{t,s}=0$ if $|s|>t$ and $L_{t,s}$ equals $L_{\sqrt{t^2-s^2}}$ in distribution when $|s|\le t$, since the area of the rectangle determines the distribution of the length of the LIP (by applying a linear transformation to the Poisson process).
Correspondingly, we set
\bes\begin{split}
E_t:= \E L_t\qquad & \text{and} \qquad E_{t,s} := \E L_{t,s}\,,  \\
\sigma_t:= \sqrt{\var(L_t)} \qquad &  \text{and} \qquad \sigma_{t,s}:=\sqrt{\var(L_{t,s})}\,.
\end{split}
\ees

Consider now a diagonal rectangle $R$ of length $\ell$ and width $w$. The following theorem gives bounds on the expectation of $L^R$. The first assertion in \eqref{eq:e:var} follows as a consequence.
\begin{theorem}\label{thm:exp} Fix $0<\delta< 1/6$. Consider a diagonal rectangle $R$ of length $\ell$ and width $w$ such that $w(\log w)^{\frac53} \le\ell^{\frac23}$. There exist constants $c(\delta), \, C(\delta)$ depending only on $\delta$ such that for $w$ large enough
\begin{align}\label{eq:e:lr}
\sqrt 2 \ell -C(\delta)\cdot  (\log w)^{\frac13+\delta}\cdot \frac{\ell }{w} \le \E L^R \le \sqrt 2 \ell - \frac{c(\delta)}{  (\log w)^{\frac53-\delta} }\cdot \frac{\ell}{w}
\end{align}
\end{theorem}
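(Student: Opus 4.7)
The approach is to partition the diagonal rectangle $R$ into diagonally-aligned sub-rectangles (``blocks'') $B_1,\ldots,B_k$, each of common length $a\approx w^{3/2}$ (up to polylogarithmic corrections in $w$) and common width $w$, and reduce both bounds in \eqref{eq:e:lr} to BDJ-type asymptotics within a single block. The plan relies on the mean expansion $E_t = \sqrt{2}t - \Theta(t^{1/3})$ (Lemma~\ref{lem:E:Var}), the moderate-deviation tail bound $\pr(|L_t-E_t|\ge x) \le e^{-c x^{3/2}/t^{1/2}}$ (Lemma~\ref{lem:L:tail}), and Johansson's transversal-fluctuation estimate. With $a\sim w^{3/2}$ the width $w$ of each block is comparable to the natural transversal-fluctuation scale $a^{2/3}$ of an unconstrained LIP of length $a$, so the strip constraint is only marginally active.

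\textbf{Lower bound on $\E L^R$.} Set $a:=w^{3/2}/(\log w)^{\eta_0}$ for a suitable $\eta_0=\eta_0(\delta)$, and in each block $B_i$ consider the LIP constrained to $B_i$ from $(ia,0)$ to $((i+1)a,0)$. A short argument using the transversal-fluctuation moderate-deviation bound shows that the unconstrained LIP between these two points stays inside $B_i$ with probability $1-o(a^{-2/3})$ for $\eta_0$ sufficiently large, so its expected length is at least $E_a - o(a^{1/3}) \ge \sqrt{2}a - Ca^{1/3}$. Concatenating the blockwise LIPs along the common centres $(ia,0)$ produces an admissible increasing path in $R$ of expected length at least $k(\sqrt{2}a - Ca^{1/3}) = \sqrt{2}\ell - C\ell a^{-2/3}$. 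Matching $\eta_0$ with $\delta$ so that $a^{-2/3}=(\log w)^{1/3+\delta}/w$ up to constants gives $\E L^R\ge \sqrt{2}\ell - C(\delta)(\log w)^{1/3+\delta}\ell/w$.

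\textbf{Upper bound on $\E L^R$.} Using the same partition, any increasing path in $R$ must cross each line $t=ia$ at some point $(ia,s_i)$ with $|s_i|\le w/2$, and taking these as waypoints gives
\[
L^R \le \sup_{\vec s} \sum_{i=0}^{k-1} L\bigl((ia,s_i),((i+1)a,s_{i+1})\bigr),
\]
where each summand is the unconstrained LIP between consecutive waypoints, and for fixed $\vec s$ the summands are independent. Splitting each summand into its mean and centred parts, the mean contribution is bounded uniformly in $\vec s$ by $k(\sqrt{2}a - c a^{1/3}) = \sqrt{2}\ell - c\ell a^{-2/3}$, using $|s_{i+1}-s_i|\le w\ll a$ together with the BDJ mean expansion. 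The remaining task is to bound the fluctuation $\E\sup_{\vec s}\sum_i(L_i-\E L_i)$ by a smaller-order term; after balancing parameters this should yield $\le c\ell a^{-2/3}/(\log w)^{\eta_1}$ for a suitable $\eta_1=\eta_1(\delta)$, producing the claimed upper bound on $\E L^R$.

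\textbf{Main obstacle.} The delicate step is the fluctuation bound in the upper half of the proof. A plain union bound over a discretised $\vec s$-grid loses an entropy factor of $\sqrt{k\log(w/\alpha)}$ (in the Gaussian tail regime) or $(k\log(w/\alpha))^{2/3}$ (in the sub-exponential regime), each of which is comparable to or exceeds the available deficit $\ell a^{-2/3}$. Pushing the argument through requires a careful chaining / multi-scale argument that exploits the near-monotone H\"older dependence of $L_i$ on its endpoints $(s_i,s_{i+1})$, together with the rapid $e^{-x^{3/2}/t^{1/2}}$ upper-tail bound of Lemma~\ref{lem:L:tail}. Matching the polylog exponents $1/3+\delta$ and $5/3-\delta$ between the two bounds then pins down the precise choices of $\eta_0$, $\eta_1$ and the discretisation mesh.
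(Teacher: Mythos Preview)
Your lower bound is essentially the paper's: partition $R$ into blocks of length $n$ where $w=n^{2/3}(\log n)^{1/3+\delta}$, connect midpoints, and invoke the transversal-fluctuation bound (Lemma~\ref{lem:tr_fl}) so that the restricted and unrestricted midpoint-to-midpoint LIPs coincide with overwhelming probability. Summing the BDJ deficits $c_1 n^{1/3}$ over $\ell/n$ blocks gives exactly $C(\delta)(\log w)^{1/3+\delta}\ell/w$.

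For the upper bound there is a genuine gap. You correctly diagnose that a union bound over waypoint sequences $\vec s$ pays an entropy cost that already swamps the deficit $\ell a^{-2/3}$, but you then leave the resolution to an unspecified ``chaining / multi-scale argument''. This is the entire content of the upper bound and cannot be deferred; it is not at all clear that endpoint chaining recovers the exponent $5/3-\delta$. The paper avoids the waypoint-entropy problem by a different mechanism. First, Propositions~\ref{prop:ER:lbd} and~\ref{prop:Delta:bd} establish that for a block $S$ of length $n$ and width $w=n^{2/3}(\log n)^{1/3+\delta}$ the spread
\[
\Delta(S)=\max_{\vx,\vy}L^S(\vx,\vy)-\min_{\vx,\vy}L^S(\vx,\vy)
\]
satisfies $\|\Delta(S)\|_1\le C\,w^2/n=C\,n^{1/3}(\log n)^{2/3+2\delta}$; the lower half of this uses a delicate construction of near-optimal restricted paths through a dyadic sequence of sub-rectangles. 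This single estimate controls \emph{all} endpoint choices at once, with no union bound over $\vec s$. Second, $R$ is partitioned not into blocks of length $\sim n$ but into \emph{long} blocks $R_i$ of length $n'=n(\log n)^3$, each of which is split into a middle piece flanked by two short side pieces $S_i^{(1)},S_i^{(3)}$ of length $n$. The middle piece cancels when comparing the free-endpoint LIP $X_i$ in $R_i$ against the midpoint-to-midpoint LIP $L^R_i$, so $\E|X_i-L^R_i|\le 2\|\Delta(S_i^{(1)})\|_1\le C\,n^{1/3}(\log n)^{2/3+2\delta}$. Meanwhile the BDJ deficit of $L^R_i$ is $c(n')^{1/3}=c\,n^{1/3}(\log n)$. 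Since $\delta<1/6$ gives $2/3+2\delta<1$, the error is strictly smaller than the deficit, and summing over the $\ell/n'$ long blocks leaves a net deficit of order $\ell/(n')^{2/3}=\ell/\bigl(w(\log w)^{5/3-\delta}\bigr)$. The two-scale structure is not optional: the $\Delta$-bound of Proposition~\ref{prop:Delta:bd} requires $w\gtrsim(\text{length})^{2/3}(\log)^{1/3+\delta}$, so it cannot be applied directly at scale $n'$; one must harvest the deficit at the long scale while paying the endpoint-uncertainty cost only at the short scale.
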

The first term $\sqrt{2} \ell$ matches the leading term in the expectation of the length of the unrestricted LIP in square domains. Our next result is a lower bound on the variance of $L^R$.
\begin{theorem} \label{thm:var:lbd} Consider a diagonal rectangle $R$ of length $\ell$ and width $w$ with
$\frac{w}{\log \log w}\le \ell^{\frac23}$. There exists an $\epsilon_0>0$ such that for any $0<\epsilon<\epsilon_0$,
we have

 \bes
\var(L^R) \ge \left(\frac{\ell}{w^{\frac12}}\right)^{1-\eps}
\ees
for $w$ large enough.
\end{theorem}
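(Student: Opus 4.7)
The plan is to partition $R$ into $N = \lfloor \ell/T \rfloor$ consecutive diagonal sub-blocks $R_1,\ldots,R_N$ of length $T$ and width $w$, where $T$ is chosen slightly below $w^{3/2}$, say $T = w^{3/2}/(\log w)^A$ with $A = A(\eps)>0$ to be fixed. Because $T^{2/3}\le w$, the transverse constraint imposed by the strip does not affect the LIP inside any individual block; consequently $L^{R_i}$ obeys the Baik--Deift--Johansson scaling, and a variance lower bound of the form $\var L^{R_i} \ge T^{2/3}/(\log T)^{C_1}\ge w/(\log w)^{C_2}$ for absolute constants $C_1,C_2$ follows either from the full BDJ asymptotics or, for robustness, from the moderate-deviation estimates of Lemma~\ref{lem:E:Var} together with a variance-from-tails argument applied to $L^{R_i}$. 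If the $L^{R_i}$ were mutually independent and $L^R$ were their sum, one would obtain
\bes
\var L^R \ge N\cdot \frac{w}{(\log w)^{C_2}} = \frac{\ell}{w^{1/2}}\cdot (\log w)^{A-C_2},
\ees
which exceeds $(\ell/w^{1/2})^{1-\eps}$ for $A$ large enough in terms of $\eps$.

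Neither assumption holds literally: a global LIP need not restrict to maximizers in each block, and consecutive blocks interact through their shared boundaries. To repair this I would invoke Theorem~\ref{thm:Om} at each inter-block boundary (identified with a diagonal segment $t = iT$ intersected with the strip) to produce, with probability $1-o(1)$ and on all but $o(N)$ boundaries, a regeneration point $\vp_i$ through which every LIP of $R$ is forced to pass. Conditionally on $\{\vp_i\}$, $L^R$ decomposes as the sum of LIP lengths between consecutive regeneration points, and these summands depend on Poisson points in pairwise disjoint regions, hence are conditionally independent. The conditional-variance inequality
\bes
\var L^R \ge \E\!\left[\var\!\left(L^R \,\big|\, \{\vp_i\}\right)\right] = \sum_i \E\!\left[\var\!\left(L^R_{\vp_{i-1}\to \vp_i}\,\big|\,\vp_{i-1},\vp_i\right)\right]
\ees
then reduces matters to the per-segment variance, which matches the per-block estimate up to the cost of fixing endpoints.

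The main obstacle is the regeneration step: I must extract from Theorem~\ref{thm:Om} that a fraction $1-o(1)$ of boundaries carry such a $\vp_i$, and that its transverse location is confined to a window of size $o(w)$ around the most likely crossing; both facts rely on the tail bounds of Lemma~\ref{lem:L:tail}, which pin down where the LIP crosses a given cross-section. A secondary and more routine issue is that the block-variance estimate above is for the free-endpoint quantity $L^{R_i}$ whereas after regeneration one needs the variance with prescribed endpoints $\vp_{i-1},\vp_i$; since the endpoints are localized within a window of diameter $o(T)$, shifting them changes the LIP length by only a polylogarithmic number of points, a lower-order correction that can be absorbed into the constants. Assembling these ingredients, together with a short bookkeeping step to discard the $o(N)$ atypical blocks, and choosing $A = A(\eps)$ sufficiently large, yields the stated bound $\var L^R \ge (\ell/w^{1/2})^{1-\eps}$.
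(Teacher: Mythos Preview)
Your outline has the right skeleton --- partition into blocks of size roughly $w^{3/2}$, use regeneration points from Theorem~\ref{thm:Om}, and pass to a conditional-variance sum --- but two of the load-bearing steps do not work as stated.

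First, Theorem~\ref{thm:Om} does \emph{not} give regeneration with probability $1-o(1)$; it gives $\pr(\Omega)\ge n^{-\eps}$, and this is essentially sharp for the argument. So you cannot ``extract from Theorem~\ref{thm:Om} that a fraction $1-o(1)$ of boundaries carry such a $\vp_i$''; only a fraction $\ge n^{-\eps}$ of blocks will regenerate. The paper accepts this loss: it counts only those indices $i$ for which \emph{two consecutive} designated blocks each contain a regeneration point (probability $\ge n^{-2\eps}$, by independence), and absorbs the factor $n^{-O(\eps)}$ into the exponent, which is exactly why the conclusion is $(\ell/w^{1/2})^{1-\eps}$ rather than $\ell/w^{1/2}$ up to polylogs. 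Your accounting, which needs $N(1-o(1))$ good boundaries, would not survive this.

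Second, conditioning on the points $\{\vp_i\}$ themselves does not give conditional independence of the inter-segment lengths: $\vp_i$ is a functional of the Poisson points in block $i$, and the segment from $\vp_i$ to $\vp_{i+1}$ uses points in both blocks $i$ and $i+1$, so adjacent segments share randomness even after conditioning. The paper avoids this by conditioning on the $\sigma$-algebra $\mathcal{G}$ generated by the Poisson points in \emph{alternate} (odd) blocks; the regeneration points in the odd blocks become $\mathcal{G}$-measurable, and the residual randomness in distinct even blocks is genuinely independent. Finally, the per-segment conditional variance is not obtained by transferring the free-endpoint BDJ variance to fixed endpoints (your claim that $\vp_i$ is localized to a window of size $o(w)$ is neither proved nor used); instead the paper combines the expectation upper bound \eqref{eq:e:lr} with the lower-tail construction behind \eqref{eq:a1} to show that, conditionally on $\mathcal{G}$, the segment length exceeds its conditional mean by $\gtrsim n^{1/3}(\log n)^{5/6}$ with probability $\ge n^{-C\eps}$, which yields the needed conditional-variance lower bound directly.
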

Complementing this result, we provide upper bounds for all centered moments of $L^R$. These two theorems together prove the second assertion in \eqref{eq:e:var}.
\begin{theorem} \label{thm:mom:ubd} Consider a diagonal rectangle $R$ of length $\ell$ and width $w$ such that
$\frac{w}{(\log w)^{1/2}}\le\ell^{\frac23}$. For each fixed $k\ge 1$,  there exists a constant $C(k)$ depending only on $k$ such that for $w$ large enough
\bes
\left\| L^R - \E L^R\right\|_{2k} \le C(k)\cdot \frac{\ell^{\frac12}}{w^{\frac14}}\cdot (\log w)^{1+k},
\ees
where we use the notation $\| X\|_p := (\E X^p)^{1/p}$.
\end{theorem}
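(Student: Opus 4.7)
The plan is to partition $R$ along its length into $N \asymp \ell/\ell_0$ consecutive diagonal sub-rectangles $B_1,\ldots,B_N$, each of the critical length $\ell_0\asymp w^{3/2}$ (up to a polylogarithmic factor so that Theorem~\ref{thm:exp} applies to each $B_j$) and common width $w$, and then to apply the Burkholder--Davis--Gundy (BDG) inequality to a Doob martingale. Writing $\xi_j$ for the Poisson configuration inside $B_j$ and $\mathcal{F}_j=\sigma(\xi_1,\ldots,\xi_j)$, form $M_j=\E[L^R\mid\mathcal{F}_j]$ with martingale differences $D_j=M_j-M_{j-1}$, so that $L^R-\E L^R=\sum_j D_j$ and
\[ \|L^R-\E L^R\|_{2k}\le C(k)\Bigl(\sum_{j=1}^N\|D_j\|_{2k}^2\Bigr)^{1/2}. \]
Since $\sqrt{N}\cdot w^{1/2}\asymp \ell^{1/2}/w^{1/4}$, the target reduces to proving $\|D_j\|_{2k}\le C(k)\,w^{1/2}(\log w)^{1+k}$ for each block.

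To bound an individual increment, I would use the resampling identity $D_j=\E\bigl[L^R(\xi)-L^R(\xi^{(j)})\mid\mathcal{F}_j\bigr]$, where $\xi^{(j)}$ replaces $\xi_j$ by an independent copy $\xi_j'$. A standard path-surgery---keep the $\xi$-optimum intact outside $B_j$ and swap its portion through $B_j$ for an optimal $\xi_j'$-path between the same random entry/exit points $A,B\in\partial B_j$---yields
\[ L^R(\xi)-L^R(\xi^{(j)})\le L^{B_j}(A,B;\xi_j)-L^{B_j}(A,B;\xi_j'), \]
together with the symmetric bound obtained by exchanging the roles of $\xi_j$ and $\xi_j'$. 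On a high-probability event that $(A,B)$ lie in a central sub-region of $\partial B_j$---controlled via the transversal-fluctuation and regeneration estimates underpinning Theorem~\ref{thm:Om}---the fixed-endpoint LIP in $B_j$ through $(A,B)$ is within a negligible error of the free-endpoint $L^{B_j}$, so $|D_j|$ is essentially dominated by the centered local fluctuations $|L^{B_j}(\xi_j)-\E L^{B_j}|+|L^{B_j}(\xi_j')-\E L^{B_j}|$. On the complementary, low-probability event the crude bound $|D_j|\le L^{B_j}(\xi_j)+L^{B_j}(\xi_j')$ combined with the moderate-deviation tails of Lemma~\ref{lem:L:tail} keeps the contribution to $\|D_j\|_{2k}$ negligible.

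The remaining single-block moment estimate $\|L^{B_j}-\E L^{B_j}\|_{2k}\le C(k)\,w^{1/2}(\log w)^{1+k}$ is at the critical scale $w\asymp \ell_0^{2/3}$, where the strip restriction is essentially inactive; it should follow by combining the centering provided by Theorem~\ref{thm:exp} with integration of the tail bounds of Lemma~\ref{lem:L:tail} against $2k$-th powers. Assembling everything and plugging the increment bound into BDG gives the claimed $C(k)\,(\ell^{1/2}/w^{1/4})(\log w)^{1+k}$.

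The main obstacle is the quantitative passage from the fixed-endpoint LIP $L^{B_j}(A,B;\cdot)$ with random $(A,B)$ to the free-endpoint $L^{B_j}$ in $L^{2k}$ rather than merely in probability; a naive upper bound $|D_j|\le \|L^{B_j}\|_{\infty}$ loses a factor $\ell_0/w^{1/2}=w$ and is disastrous. The needed $L^{2k}$ comparison has to be obtained by a union bound over candidate endpoint locations combined with iterated integration of tail estimates, and it is precisely this step that produces the $(\log w)^{1+k}$ factor in the final estimate.
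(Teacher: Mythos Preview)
Your high-level strategy---flat block decomposition, Doob martingale, BDG---is viable and genuinely different from the paper's route. The paper instead performs a \emph{recursive dyadic splitting}: at stage $i$ a rectangle of length $\ell_i$ is cut into two halves of length $\ell_{i+1}$ separated by a short middle block $S$ of length $n$ (where $w=n^{2/3}(\log n)^{1/3+\delta}$), the deterministic sandwich
\[
L^R_1(i{+}1)+\min_{\vx,\vy}L^S(\vx,\vy)+L^R_2(i{+}1)\;\le\; L^R(i)\;\le\; L^R_1(i{+}1)+\max_{\vx,\vy}L^S(\vx,\vy)+L^R_2(i{+}1)
\]
is invoked, and Proposition~\ref{prop:Delta:bd} together with Lemma~\ref{lem:xns:bd} bound the middle-block oscillation by $C(k)\,w^2/n$. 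Independence of the two halves and backward induction on $i$ then yield the claim; the $k$-dependent power $(\log w)^{1+k}$ arises from the binomial expansion of $\E[(X+Y)^{2k'}]$ at each inductive step. Your BDG approach replaces the recursion by a single sum and, if executed correctly, actually produces a $k$-independent power of $\log w$, so the recursive bookkeeping is slightly wasteful in the logarithm.

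There is, however, a concrete gap in your handling of the random entry/exit points $(A,B)$. The assertion that $(A,B)$ lie in a ``central sub-region'' of $\partial B_j$ with high probability is unfounded: the global geodesic in the strip $R$ may enter and leave a block at any height, and neither Lemma~\ref{lem:tr_fl} (which concerns point-to-point paths, not strip geodesics crossing an internal boundary) nor Theorem~\ref{thm:Om} (which is about merging of corner-to-corner geodesics and only holds with probability $\ge n^{-\eps}$, far too small for an $L^{2k}$ estimate) says anything about this. The correct fix is not to localize $(A,B)$ but to use the uniform endpoint control of Proposition~\ref{prop:Delta:bd}: since $\|\Delta(B_j)\|_{2k}\le C(k)\,w^2/\ell_0$, one has for \emph{every} pair $A,B$
\[
\bigl|L^{B_j}(A,B;\xi_j)-L^{B_j}(A,B;\xi_j')\bigr|\le \Delta(B_j;\xi_j)+\Delta(B_j;\xi_j')+\bigl|L^{B_j}(\xi_j)-\E L^{B_j}\bigr|+\bigl|L^{B_j}(\xi_j')-\E L^{B_j}\bigr|,
\]
and combining this with Lemma~\ref{lem:xns:bd} gives $\|D_j\|_{2k}\le C(k)\,w^2/\ell_0$ directly, with no need to restrict $(A,B)$ or to invoke regeneration. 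Plugging this into BDG then completes your argument.
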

The theorems above suggest that the variance decreases as the width increases for each fixed~$\ell$. It is worth noting that such a monotonicity property was a key (unproven) assumption made in the paper \cite{chatt-dey} for first passage percolation. Our final theorem is a Gaussian limit theorem for $L^R$ when the width is less than the critical width.
\begin{theorem}\label{thm:clt}
Fix $\gamma<\frac{2}{3}$. Consider a sequence of diagonal rectangles $R = R(\ell)$ of length $\ell$ and width $w(\ell)$ such that $w(\ell)\le \ell^\gamma$.
Then, as $\ell\to \infty$:
\begin{equation} \label{eq:clt}
\frac{L^R-\E L^R}{\sqrt{\var\, L^R}} \Rightarrow N(0,1).
\end{equation}
\end{theorem}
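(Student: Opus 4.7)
The plan is to realize $L^R$ as a sum of independent contributions obtained from a partition of $R$ into diagonal sub-blocks joined at regeneration points, and then to invoke the Lindeberg-Feller central limit theorem. The independence is produced by the regeneration points supplied by Theorem~\ref{thm:Om}, while the moment estimates required for Lyapunov's condition are supplied by Theorems~\ref{thm:exp}--\ref{thm:mom:ubd}.

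Fix a small $\delta>0$ and choose a block scale $\ell_b$ with $w^{3/2+\delta}\le \ell_b \le w^{3/2+2\delta}$. Since $w\le\ell^\gamma$ with $\gamma<2/3$, one may pick $\delta$ so small that $\ell_b = o(\ell)$, and in particular the number of blocks $N:=\lfloor \ell/\ell_b\rfloor$ tends to infinity. Partition $R$ along the $t$-axis into consecutive diagonal sub-rectangles $B_1,\ldots,B_N$ of length $\ell_b$ and width $w$; within each $B_i$ the ratio $w/\ell_b^{2/3}$ tends to $0$, so the block is in the regime covered by Theorems~\ref{thm:exp}, \ref{thm:var:lbd}, \ref{thm:mom:ubd}. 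Now apply Theorem~\ref{thm:Om} separately in each block: for each $i$, with probability at least $1-\eta$ (for a fixed small $\eta>0$) there exists a Poisson point $\mathbf{p}_i \in B_i$ such that every increasing path in $R$ traversing $B_i$ is forced to pass through $\mathbf{p}_i$. Because the Poisson process is independent in disjoint blocks, the events ``$B_i$ is good'' are mutually independent, so the indices $i_1<i_2<\cdots<i_M$ of good blocks form a renewal sequence whose gaps have geometric tails.

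Writing
\begin{equation*}
L^R \;=\; L^R(\text{start}\to \mathbf{p}_{i_1}) + \sum_{k=1}^{M-1} X_k + L^R(\mathbf{p}_{i_M}\to \text{end}), \qquad X_k := L^R(\mathbf{p}_{i_k}\to \mathbf{p}_{i_{k+1}}),
\end{equation*}
the variables $X_k$ are mutually independent since they are functionals of the Poisson process on disjoint diagonal regions. Each $X_k$ spans a diagonal region of length $(i_{k+1}-i_k)\ell_b$ and width $w$, so Theorems~\ref{thm:var:lbd} and \ref{thm:mom:ubd} give
\begin{equation*}
\var X_k \asymp (i_{k+1}-i_k)\cdot w^{1+o(1)}, \qquad \|X_k-\E X_k\|_4 \;\lesssim\; \bigl[(i_{k+1}-i_k)\ell_b\bigr]^{1/2}\cdot w^{-1/4}\cdot \mathrm{polylog}(w).
\end{equation*}
Summing over $k$ and using the boundedness of gap moments yields $\sum_k \var X_k \asymp \ell/w^{1/2+o(1)}$, matching the target variance asserted in \eqref{eq:e:var}, while the two boundary terms are $O_{L^2}(\ell_b^{1/2}/w^{1/4}) = o(\sqrt{\var L^R})$ and hence absorbed by Slutsky. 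The Lyapunov ratio at exponent $4$ is bounded by $\mathrm{polylog}(w)\cdot \ell_b/\ell$, which tends to $0$ because $\ell_b \le w^{3/2+2\delta}\le \ell^{\gamma(3/2+2\delta)}$ and $\gamma<2/3$. Lindeberg-Feller then delivers the Gaussian limit \eqref{eq:clt}.

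The principal obstacle is the regeneration step itself: Theorem~\ref{thm:Om} must produce a point $\mathbf{p}_i$ through which \emph{every} LIP (not merely a typical one) is forced to pass, so that the decomposition $L^R = \sum_k X_k + \text{boundary}$ is \emph{exact} rather than approximate and the independence of the summands is genuine. A secondary difficulty is handling runs of consecutive bad blocks; these can be absorbed into a single $X_k$ covering the enlarged diagonal region, for which the moment bound of Theorem~\ref{thm:mom:ubd} still applies, and their total contribution is negligible thanks to the geometric tail of the gaps in the renewal sequence.
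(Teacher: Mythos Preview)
Your proposal has two genuine gaps. First, you misquote Theorem~\ref{thm:Om}: it asserts only $\pr(\Omega)\ge n^{-\epsilon}$, not $\pr(\Omega)\ge 1-\eta$ for a fixed $\eta$. Thus blocks are good only with polynomially small probability, so the gaps $i_{k+1}-i_k$ in your renewal sequence have mean of order $n^{\epsilon}$, not $O(1)$; your moment and boundary estimates do not take this into account. Second, and more seriously, the claim that the $X_k$ are mutually independent is unjustified. The regeneration point $\mathbf{p}_{i_k}$ is a functional of the Poisson configuration in the entire block $B_{i_k}$, and both $X_{k-1}$ and $X_k$ depend on that configuration (the former to compute the LIP arriving at $\mathbf{p}_{i_k}$, the latter to compute the LIP departing from it). Conditioning on $\mathbf{p}_{i_k}$ does not decouple the two halves of $B_{i_k}$, because the very property of being a regeneration point is a global event for the block. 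In the paper the regeneration points are used only for the variance \emph{lower bound} (Section~\ref{sec:var:lbd}), and there the issue is sidestepped by conditioning on the Poisson process in alternate blocks and applying the conditional variance formula; no CLT-type independence is claimed.

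The paper's proof of Theorem~\ref{thm:clt} avoids regeneration altogether and is considerably simpler than what you attempt. It partitions $R$ \emph{deterministically} into alternating long blocks of length $\ell^a$ (with $\tfrac{2+3\gamma}{4}<a<1$) and short buffer blocks of length $n$ (where $w=n^{2/3}(\log n)^{1/3+\delta}$). The maximal lengths $X_1,\ldots,X_m$ in the long blocks are trivially i.i.d.\ because they depend on disjoint Poisson regions. The short buffers absorb the discrepancy between $L^R$ and $\sum_i X_i$: by Proposition~\ref{prop:Delta:bd} and Lemma~\ref{lem:xns:bd}, the total buffer error has $L^k$-norm $O\bigl(m\,n^{1/3}(\log n)^{2/3+2\delta}\bigr)$, which is $o\bigl(\sqrt{m\,\var X_1}\bigr)$ precisely because $a>\tfrac{2+3\gamma}{4}$. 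Lindeberg--Feller then applies directly to $\sum_i X_i$ using Theorems~\ref{thm:var:lbd} and~\ref{thm:mom:ubd} for the variance lower bound and fourth-moment upper bound on each $X_i$. No random decomposition or renewal bookkeeping is needed.
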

\begin{remark} In the above results we have considered $L^R$, the length of the LIP within the rectangle $R$. One can also obtain similar results for $L^R(\mathbf{a},\mathbf{b})$, the length of the LIP among paths starting at $\mathbf{a}$, ending at $\mathbf{b}$ and restricted to lie within $R$, where $\mathbf{a},\mathbf{b}$ are points on the left and right boundaries of the rectangle $R$. This can be done by arguing as in the proof of Theorem \ref{thm:strip} given in Section \ref{sec:CLT}.
\end{remark}
\begin{remark} The main ingredients in our results are the tail estimates in Lemma \ref{lem:L:tail} and Lemma \ref{lem:tr_fl}. We believe that results of a similar flavor hold for other models where corresponding estimates are known.
\end{remark}
We now describe the outline of the paper. Section \ref{sec:prelim} describes the moderate deviation results as well as the van-den-Berg-Kesten type inequality that our analysis relies upon, and collects several consequences. Section \ref{sec:exp:lbd} gives a lower bound on the expectation of the length of the LIP in diagonal rectangles and Section \ref{sec:up:bd} provides error estimates for the length of the LIP in {\it short blocks}. We next prove Theorem \ref{thm:exp} in Section \ref{sec:exp}. Section \ref{sec:merge} contains the main argument of the paper in which we give a lower bound on the probability that LIPs (between arbitrary starting and ending points on the left and right boundaries respectively) in each block meet at a point. We use this result to prove Theorem \ref{thm:var:lbd} in Section \ref{sec:var:lbd}. After this we prove Theorem \ref{thm:mom:ubd} in Section \ref{sec:mom:ubd} and then Theorem \ref{thm:clt} in Section \ref{sec:CLT}. We end the paper with a few open questions for the reader.

Throughout the article, $c,C$ will be used to denote arbitrary positive constants which may change from line to line. If the constants depend
on a certain parameter this shall be indicated in parenthesis.

\subsection*{A remark on timing}
A preliminary version of this paper containing full statements and proofs of the above results is available since December 2015 on the website of the first author. The second author presented these results at seminars in Bristol, Oxford and Sussex in 2015--16 while the third author discussed these at the June 2015 ``Groups, Graphs and Stochastic Processes'' Banff workshop\footnote{A video is available at \url{http://www.birs.ca/events/2015/5-day-workshops/15w5146/videos/watch/201506231638-Peled.html}}. For various reasons, polishing of the draft and its final posting to the arXiv were greatly delayed, until its final appearance there on August 2018. In the interim, many results on last passage percolation, in the continuum and on the lattice have appeared, including \cite{basu-sark-sly,basu-gang-sly,basu-gang-hamm,basu2018time,hammond2016brownian,hammond2017modulus,hammond2017rarity, hammond2017patchwork, hamm-sark}. Among these, especially related to the present work is a recent result of Basu, Ganguly and Hammond \cite{basu-gang-hamm} who consider the length of the longest increasing path between $(0,0)$ and $(n,n)$, constrained to enclose an atypically large area of at least $(1/2+\alpha)n^2$. Apart from finding the law of large numbers for the model (in fact, they also find a limiting curve), the key result of their work is that the fluctuations around the limiting curve are of order $n^{1/2}$ and the convex hull facet length is of order $n^{3/4}$. This should be contrasted with the orders $n^{1/3}$ and $n^{2/3}$ for the unconstrained model. In a second related work which appeared very recently, Basu and Ganguly \cite{basu2018time} find, among other results, upper and lower bounds on the variance of the last passage percolation time in thin cylinders which match up to a multiplicative constant. The latter two works kindly acknowledged our preprint.

\section{Preliminaries}\label{sec:prelim}
In this section we collect several results which we make use of in the
sequel.
\subsection{Asymptotic Behavior}
The following was proved in \cite{baik-deif-joha-99}.
\begin{lemma}\label{lem:E:Var} There exist positive constants $c_0, c_1$ such that
\begin{align}
\sigma_t &    = c_0 t^{1/3}\big(1+o(1)\big) \label{eq:sig:asy}, \\
 E_t & = \sqrt{2}\, t - c_1 t^{1/3} \big(1+o(1)\big) \label{eq:E:asy}
\end{align}
as $t\to \infty$. We also have for all $k \ge 1$,
\begin{equation} \label{eq:mom:bd}
 \big\|L_t- \sqrt{2}\, t \big \|_k \le C t^{1/3}.
\end{equation}
for some constant $C>0$.
\end{lemma}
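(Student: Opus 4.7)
The plan is to recognize this lemma as essentially a restatement, in diagonal coordinates, of the theorem of Baik--Deift--Johansson, together with a standard moment estimate. The first step would be to observe the change of coordinates. The definition $L_t := L\big((0,0),(t,0)\big)$ refers to points in the diagonal $(t,s)$-plane; the corresponding Poisson process (rotated by $\pi/4$) in the original $(x,y)$-coordinates lies in a square with diagonal of length $t$, i.e.\ a square of side $t/\sqrt{2}$. Since the Poisson point process of unit intensity is invariant under rotations of the plane, $L_t$ has the same law as the length $\widetilde L_{t/\sqrt 2}$ of the LIP (in the original coordinates) within $[0,t/\sqrt 2]^2$.

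Having made this identification, the asymptotic formulas \eqref{eq:sig:asy} and \eqref{eq:E:asy} reduce immediately to the Baik--Deift--Johansson results \eqref{eq:unrestricted_exp} and \eqref{eq:unrestricted_var}, as quoted in the introduction. Substituting $n = t/\sqrt{2}$ into $\E \widetilde L_n = 2n - c_1 n^{1/3}(1+o(1))$ yields $E_t = \sqrt{2}\, t - c_1' t^{1/3}(1+o(1))$, and similarly for the variance, with constants $c_0, c_1 > 0$ absorbing the factors of $\sqrt{2}^{1/3}$ or $\sqrt{2}^{2/3}$. So the first two assertions of the lemma are, up to a rescaling of constants, a direct citation.

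The moment estimate \eqref{eq:mom:bd} does not follow from distributional convergence alone; it requires uniform integrability of $(L_t - \sqrt 2\, t)/t^{1/3}$ to arbitrary order. I would obtain this from moderate deviation tail bounds, namely that there exist $c, C, \alpha > 0$ such that, for all $t$ large and all $x \ge 1$,
\begin{equation*}
\pr\!\left( \big| L_t - \sqrt{2}\, t \big| \ge x\, t^{1/3} \right) \le C e^{-c x^{\alpha}},
\end{equation*}
which are now classical: the lower-tail bound (with $\alpha = 3/2$) is due to Deuschel--Zeitouni / L\"owe--Merkl, and the upper tail (with an exponent sufficient for the present purpose) is due to Baik--Deift--McLaughlin--Miller--Zhou / Ledoux--Rider. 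Integrating these tail bounds, i.e.\ writing $\E |L_t - \sqrt 2\, t|^k = k \int_0^\infty x^{k-1} \pr(|L_t - \sqrt 2 t| \ge x)\,dx$ and making the substitution $x = y t^{1/3}$, produces $\E |L_t - \sqrt 2 t|^k \le C(k)\, t^{k/3}$ and hence \eqref{eq:mom:bd}.

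The only real subtlety, and the step I would flag as the main obstacle if one were to give a self-contained proof rather than quote the literature, is the upper-tail estimate at scale $t^{1/3}$: the lower tail is comparatively robust (it follows from a superadditivity / concentration argument together with the mean asymptotics), while the upper tail on the correct scale requires genuine input from the integrable structure of the Poissonized model (RSK, Gessel's formula, orthogonal polynomials). Once both tails are in hand, the moment bound is a short calculation, and all three assertions of the lemma follow.
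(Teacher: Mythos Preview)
Your proposal is correct and matches the paper's treatment: the paper does not prove this lemma but simply cites \cite{baik-deif-joha-99}, and your write-up makes explicit the coordinate change and the standard tail-integration argument for \eqref{eq:mom:bd} that justify this citation. One small slip: you attribute the exponent $\alpha=3/2$ to the lower tail, whereas the lower tail in fact carries the cubic exponent (cf.\ Lemma~\ref{lem:L:tail}); this is immaterial for the moment bound, since any $\alpha>0$ suffices.
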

The next lemma gives tail bounds on the deviation of the length of the LIP
from its mean. This lemma is the black box for our arguments and shall be referred to
several times during the course of the paper. It collects results proved in \cite{baik-deif-joha-99},
\cite{lowe-merk} and \cite{lowe-merk-roll}.
\begin{lemma}\label{lem:L:tail}  The following hold.
\begin{enumerate}
\item (Upper tail bounds) There exist constants $c, C>0$ such that for sufficiently large $t$,
\begin{align}\label{eq:up:tail}
c \exp\big(-CT^{3/2}\big)\le \pr \left(\frac{L_t- E_t}{\sigma_t} \ge T\right) \le C \exp\big(-c T^{3/2}\big)
\end{align}
for all $1\le T\le t^{2/3}$. \\
\item (Lower tail bound) There exist constants $c, C>0$ such that for sufficiently large $t$,
\begin{align}\label{eq:low:tail}
\pr \left(\frac{L_t- E_t}{\sigma_t} \le -T\right) \le C\exp\big(-c T^{3}\big)
\end{align}
for all $1\le T\le t^{2/3}$.
\end{enumerate}
\end{lemma}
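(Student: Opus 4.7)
The plan is to treat the three bounds by complementary methods, since the cubic-in-$T$ lower tail reflects a different mechanism from the $3/2$-power upper tails. For the upper tail upper bound in \eqref{eq:up:tail}, I would first try a direct first-moment estimate. The expected number of increasing $k$-subsequences of the Poisson process on the domain of area $A = t^2/2$ corresponding to $L_t$ equals $A^k/(k!)^2$, so Markov's inequality gives $\pr(L_t \ge k) \le A^k/(k!)^2$. Setting $k = E_t + T\sigma_t$ and applying Stirling's formula, the logarithm of this bound becomes $-cT^{3/2}$ once $k$ exceeds the critical threshold $e\sqrt{A}$, \ie once $T \gtrsim t^{2/3}$. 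This handles only the upper end of the stated range. To cover all moderate deviations, I would invoke the determinantal description of the Poissonized Plancherel measure (via the RSK correspondence and Johansson's identification with a Bessel-type determinantal point process), which expresses $\pr(L_t \le k)$ as a Fredholm determinant. Operator-norm bounds on the kernel then yield the uniform estimate $\exp(-cT^{3/2})$ for $1 \le T \le t^{2/3}$.

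The matching lower bound in \eqref{eq:up:tail} I would derive from the convergence of $(L_t - E_t)/\sigma_t$ to the Tracy--Widom distribution $F_2$, combined with its right-tail asymptotic $\log(1 - F_2(T)) \sim -\tfrac{4}{3} T^{3/2}$. To upgrade convergence in distribution to pointwise tail bounds at scales $T \to \infty$, one needs quantitative control of the Fredholm determinant away from the central scaling window, again available from the kernel estimates of \cite{baik-deif-joha-99}. For the lower tail \eqref{eq:low:tail}, I would first record that a purely probabilistic concatenation argument (splitting the rectangle into $M^2$ diagonal sub-squares, concatenating sub-LIPs, and applying Hoeffding to the sum of independent sub-lengths of standard deviation $\sim (t/M)^{1/3}$) yields at best $\exp(-cT^{3/2})$ after optimizing in $M$, and so cannot reach the cubic rate. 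The $T^3$ rate instead reflects the non-Gaussian left tail of Tracy--Widom, $\log F_2(-s) \sim -\tfrac{1}{12}s^3$, and the proof proceeds through the determinantal representation: shifting the threshold into the relevant scaling window pushes the eigenvalues $\lambda_i$ of the correlation kernel close to $1$, so that the gap probability $\pr(L_t \le k) = \prod_i (1-\lambda_i)$ decays cubically in the normalized shift parameter.

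The main obstacle throughout is making the determinantal machinery quantitative: one needs operator-norm and trace-norm bounds on the Bessel/Airy kernel that are uniform in the deviation parameter across the entire moderate-deviation range, rather than merely the pointwise convergence supplied by \cite{baik-deif-joha-99}. These technical estimates form the core of the proofs of the tail bounds, and the refinements needed to push them through the full range of $T$ are carried out in \cite{lowe-merk} and \cite{lowe-merk-roll}. Since the present paper uses the lemma only as a black box in subsequent sections, I would not attempt to reprove these estimates but would instead record the precise forms required and cite those references directly.
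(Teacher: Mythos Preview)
Your proposal correctly identifies that the two upper bounds (the right-tail upper bound in \eqref{eq:up:tail} and the left-tail bound in \eqref{eq:low:tail}) are black-box citations to \cite{baik-deif-joha-99}, \cite{lowe-merk}, \cite{lowe-merk-roll}, and the paper does exactly this. However, your treatment of the \emph{lower} bound in \eqref{eq:up:tail} differs substantially from the paper's. You propose extracting it from the Tracy--Widom right-tail asymptotic together with quantitative Fredholm-determinant estimates uniform across the moderate-deviation window, acknowledging this as the main technical obstacle. The paper instead gives a short, purely probabilistic argument that avoids all determinantal machinery: from distributional convergence to Tracy--Widom one obtains a constant $\lambda>0$ and a fixed integer $k$ such that $\pr(L_q - E_q \ge k\sigma_q)\ge e^{-\lambda}$ uniformly in $q\ge 1$. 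One then sets $r = tT^{-3/2}$, splits $[0,t]$ into $\lceil T^{3/2}\rceil$ consecutive sub-segments of length $r$, and by independence the event that the LIP in each sub-segment exceeds its mean by $k\sigma_r$ has probability at least $e^{-\lambda(\lceil T^{3/2}\rceil+1)}$. Superadditivity of path lengths then forces $L_t \ge E_t + T\sigma_t$ on this event, yielding the bound $c\exp(-CT^{3/2})$ directly.

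The trade-off is this: your route, if carried out, would give sharper constants tied to the Tracy--Widom tail exponent $4/3$, but requires the heavy analytic input you flag as the main obstacle. The paper's argument needs only the \emph{qualitative} distributional convergence at a single fixed threshold plus the natural superadditive structure of the model, and is entirely self-contained once Lemma~\ref{lem:E:Var} is in hand. Since the lemma is used only as a black box downstream, either approach suffices, but you should be aware that the elementary concatenation argument exists and is what the paper actually presents.
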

\begin{proof}
The upper bounds in \eqref{eq:up:tail} and \eqref{eq:low:tail} have been
proved in Lemma 7.1 of \cite{baik-deif-joha-99}; see also equations (1.4) and (1.5) in \cite{joha-ptrf-00}.
We now prove the lower bound in \eqref{eq:up:tail}. From Lemma~\ref{lem:E:Var} and super-additivity of $E_{r}$, we have that
\[
0\le \frac{\sqrt{2}r-E_{r}}{\sigma_{r}}\le a \text{ and } b_1\le \frac{\sigma_{r}}{r^{1/3}}\le b_2 \text{ for all } r\ge 1
\]
for some universal constants $a,b_{1},b_{2}>0$.  From the distributional convergence of $2^{1/6}t^{-1/3}(L_t-\sqrt{2} t)$ to the Tracy-Widom
disrtibution (see \cite{baik-deif-joha-99}) we note that there
exists a number $\gl>0$ such that
\bes
\pr(L_q-E_q\ge k \sigma_q) \ge e^{-\gl} \quad \mbox{ for all } q\ge 1.
\ees
Let $k=\lceil a+{b_2}/{b_1}\rceil$ and choose $r= t T^{-3/2}\ge 1$ and now consider the event
\bes
A=\bigcap_{j=0}^{\lceil T^{3/2}\rceil} \Big\lbrace L\big((jr,0), ((j+1)r,0)\big) -E_r \ge k\sigma_r\Big\rbrace
\ees
The events in the braces are all independent and hence $\pr(A)\ge e^{-\gl( \lceil T^{3/2}\rceil +1)}$. Next note that on the event $A$, we have
\bes \begin{split} L_t
& \ge \sum_{j=0}^{\lceil T^{3/2}\rceil}  L\big((jr,0), ((j+1)r,0)\big) \\
& \ge  k T^{3/2} \sigma_r + T^{3/2} E_r \\
&\ge  \Bigl(\frac{\sqrt{2}r-E_r}{\sigma_r} + \frac{b_2}{b_1}\Bigr)T^{3/2} \sigma_r+ T^{3/2} E_r \\
&\ge  \sqrt{2}rT^{3/2} + \frac{\sigma_t}{t^{1/3}}\cdot\frac{r^{1/3}}{\sigma_r}\cdot T^{3/2}\sigma_r
=\sqrt{2}t +T\sigma_t
\ge E_t+T\sigma_t.
 \end{split} \ees
 This completes the proof of the lemma.
\end{proof}
The following fundamental lemma is a consequence of Lemma \ref{lem:E:Var}.
\begin{lemma} \label{lem:s:t} There exists a small $\alpha_0>0$ such that the following hold.
\begin{enumerate}
\item There exist positive constants $c,C$ such that whenever $|s|/t\le \alpha_0$
\begin{align}\label{eq:E:diff}
\frac{cs^2}{t}\le E_t - E_{t,s} \le \frac{Cs^2}{t}
\end{align}
\item There exist constants $c_2,c_3> 0$ such that whenever $|s|/t\le \alpha_0$
\bes
c_2 t^{1/3} \le \sigma_{t,s} \le c_3 t^{1/3}
\ees
when $t$ is large enough. The upper bound holds for all $|s|\le t$.
\item The tail bounds in Lemma \ref{lem:L:tail} hold (perhaps with different constants)
with $L_{t}, E_t, \sigma_t$ replaced
by $L_{t,s}, E_{t,s}, \sigma_{t,s}$ respectively.
\end{enumerate}
\end{lemma}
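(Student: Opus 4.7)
The plan is to reduce all three parts to the corresponding statements for $L_r$ established in Lemma~\ref{lem:E:Var} and Lemma~\ref{lem:L:tail}, by exploiting the distributional identity $L_{t,s}\equald L_r$ with $r:=\sqrt{t^2-s^2}$ recorded in the paper just above the statement. The basic arithmetic is $t-r = s^2/(t+r)$, so under $|s|/t\le \alpha_0<1$ one has
\[
\frac{s^2}{2t}\le t-r \le \frac{s^2}{(1+\sqrt{1-\alpha_0^2})\,t},
\]
and in particular $r\in[t\sqrt{1-\alpha_0^2},\,t]$ is comparable to $t$. All three parts will reduce to this fact plus asymptotics.

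For part (1), I would write
\[
E_t - E_{t,s} \;=\; \sqrt{2}\,(t-r) \;-\; \bigl[(\sqrt{2}\,t - E_t)-(\sqrt{2}\,r - E_r)\bigr].
\]
The leading term is $\Theta(s^2/t)$ by the arithmetic above. By Lemma~\ref{lem:E:Var} the bracket equals $c_1(t^{1/3}-r^{1/3})(1+o(1)) + c_1 t^{1/3}\cdot o(1)$, where $t^{1/3}-r^{1/3} = O((t-r)/t^{2/3}) = O(s^2/t^{5/3})$ is of strictly lower order than $s^2/t$. Choosing $\alpha_0$ small and $t$ large, the stated two-sided bound follows. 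For part (2), the identity $\sigma_{t,s}=\sigma_r$ combined with $\sigma_r=c_0 r^{1/3}(1+o(1))$ and $r\asymp t$ immediately yields the two-sided bound when $|s|/t\le\alpha_0$. The upper bound for all $|s|\le t$ uses a universal estimate $\sigma_r\le C r^{1/3}\le C t^{1/3}$ for $r$ large, combined with the trivial fact that $\sigma_r$ is bounded by a constant when $r$ is bounded (so is still $\le Ct^{1/3}$ once $t$ is large).

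For part (3), the tail bounds transfer immediately: since $(L_{t,s}-E_{t,s})/\sigma_{t,s}\equald(L_r-E_r)/\sigma_r$, Lemma~\ref{lem:L:tail} applies with $t$ replaced by $r$, valid for $1\le T\le r^{2/3}$. Under $|s|/t\le\alpha_0$ one has $r^{2/3}\asymp t^{2/3}$, so the range of $T$ is preserved up to an adjusted multiplicative constant, and the tail constants in the conclusion may be adjusted accordingly. The main delicacy is the lower bound in part (1): the $o(t^{1/3})$ error in the expansion of $E_t$ must not overwhelm the leading $s^2/t$ scale. A robust way to handle this in a wide range of $|s|$ is via super-additivity, namely $E_t - E_r \ge E_{t-r}$, combined with the lower-bound asymptotic $E_{t-r}\ge \sqrt{2}(t-r) - c_1(t-r)^{1/3}(1+o(1))$ whenever $t-r$ is large; this reduces the requirement to $|s|\gg t^{1/2}$, which comfortably covers the regimes in which the lemma is invoked in the rest of the paper.
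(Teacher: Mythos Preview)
Your approach is exactly the paper's, which offers only a one-line proof invoking the distributional identity $L_{t,s}\equald L_{\sqrt{t^2-s^2}}$ together with the expansion $\sqrt{t^2-s^2}=t-\frac{s^2}{2t}\bigl(1+O(s^2/t^2)\bigr)$. Your added care in flagging the small-$|s|$ subtlety for the lower bound in part~(1) actually goes beyond what the paper provides; the paper does not address it, and your super-additivity patch recovering the bound for $|s|\gg t^{1/2}$ indeed covers every invocation of the lemma in the sequel.
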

\begin{proof} The proof is a ready consequence of the distributional identity $L_{t,s}\stackrel{d}{=} L_{\sqrt{t^2-s^2}}$ and the fact that $\sqrt{t^2-s^2} = t- \frac{s^2}{2t}(1+O(\frac{s^2}{t^2}))$ when $|s|\ll t$.
\end{proof}

\subsection{Transversal Fluctuations}
In this section, we shall give a quantitative control on the transversal fluctuations
of the LIP in square domains $[0,n]^2$. For this we shall discretize the space
 to a lattice of points of mesh size $1$. In
order to show that the lengths of LIPs between points of the lattice are good approximations of
the lengths of LIPs between general points we need a control over the number of Poisson
points in small squares. Thus we define the event
\begin{equation*}
  A_0:=\left\{\text{The number of points in each $1\times 1$ square
  contained in $[0,n]^2$ is less than $(\log n)^2$} \right\}.
\end{equation*}
\begin{lemma} \label{lem:grid}
  We have
  \begin{equation*}
    \pr\left(A_0^c\right) \le C\exp\left(-c(\log n)^2 \log \log n\right).
  \end{equation*}
\end{lemma}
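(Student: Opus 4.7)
The plan is a standard Poisson tail bound combined with a discretization argument, so the task reduces to (i) reducing the uncountable family of $1\times 1$ squares in $[0,n]^2$ to a manageable finite collection, and (ii) estimating the upper tail of a Poisson random variable at a value that is super-logarithmically large.

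First I would tile $[0,n]^2$ by the $n^2$ axis-aligned unit cells $C_{i,j}=[i,i+1]\times[j,j+1]$ with $i,j\in\{0,\ldots,n-1\}$, and let $N_{i,j}$ be the number of Poisson points in $C_{i,j}$, each marginally $\mathrm{Poisson}(1)$ and independent across $(i,j)$. The key geometric observation is that every axis-aligned $1\times 1$ square $S\subset [0,n]^2$ is contained in the union of at most four adjacent cells $C_{i,j}$ (namely those indexed by the floors and floor-plus-ones of the coordinates of the lower-left corner of $S$). Consequently, on the event $\{\max_{i,j} N_{i,j} < (\log n)^2/4\}$, every such square $S$ contains strictly fewer than $(\log n)^2$ Poisson points, which is exactly the event $A_0$.

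Next I would invoke the standard Chernoff bound for Poisson: for $X\sim\mathrm{Poisson}(1)$ and an integer $k\ge 2$,
\begin{equation*}
\pr(X\ge k)\le \frac{e^{-1} e^{k}}{k^{k}}=\exp\bigl(-k\log k+k-1\bigr).
\end{equation*}
Plugging $k=\lceil (\log n)^2/4\rceil$ gives $\log k \sim 2\log\log n$, so
\begin{equation*}
\pr\bigl(N_{i,j}\ge k\bigr)\le \exp\bigl(-c\,(\log n)^2\log\log n\bigr)
\end{equation*}
for all $n$ large enough and some absolute constant $c>0$. A union bound over the $n^2$ cells yields
\begin{equation*}
\pr(A_0^c)\le n^2\exp\bigl(-c\,(\log n)^2\log\log n\bigr)\le C\exp\bigl(-c'\,(\log n)^2\log\log n\bigr),
\end{equation*}
where the $n^2=\exp(2\log n)$ prefactor is absorbed since $(\log n)^2\log\log n$ dominates $\log n$ for large $n$.

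There is no serious obstacle here; the only tiny subtlety is the reduction from the uncountable family of $1\times 1$ squares to the finite grid, which the ``covered by four cells'' remark handles cleanly. The quantitative shape of the bound, in particular the $\log\log n$ factor in the exponent, comes entirely from the ``$k\log k$'' behaviour of the Poisson upper tail at $k\asymp(\log n)^2$.
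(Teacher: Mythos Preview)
Your proposal is correct and matches the paper's own proof essentially step for step: the paper also tiles $[0,n]^2$ by $n^2$ unit cells, notes that any $1\times 1$ square meets at most four of them so it suffices to bound each cell's count by $(\log n)^2/4$, applies the Poisson tail bound $\pr(X\ge k)\le \exp(-k\log k+k-1)$, and finishes with the same union bound.
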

\begin{proof}
  It suffices to cover $[0,n]^2$ by $n^2$ squares of side-length
  $1$ and show that no such square contains more than $(\log n)^2 / 4$
  Poisson points with high probability. This is because every square of
  side-length $1$ intersects at most $4$ of the $n^2$ squares. The
  number of Poisson points in a square of side-length $1$ is
  Poisson$(1)$ and hence the probability for it to be larger than
  $k$ is bounded by $\exp(-k\log k+k-1)$, yielding the lemma.
\end{proof}
Let $S$ denote the maximal absolute value of the $s$ coordinate of
  a point on a maximizing path connecting $(0,0)$ to $(n,0)$ (in the
  $t$--$s$ plane). More precisely,
  \bes
  S= \max \left\lbrace |\omega(t)|: 0\le t\le n,\, t \mapsto \omega(t) \text{ is a maximizing path in } \cL\big((0,0),(n,0)\big)\right\rbrace
  \ees
Thus $S$ measures the transversal fluctuations of
  the path.
\begin{lemma}(Transversal fluctuations upper bound)\label{lem:tr_fl}
 For every $0<\delta<\frac{1}{3}$ there exits a positive integer $n_0(\delta)$, such that for $n\ge n_0(\delta)$ we have
  \begin{equation*}
    \pr\left(S > n^{2/3}\left(\log n\right)^{\frac{1}{3}+\delta}\right)\le
    C\exp\big(-c(\log n)^{1 + 3\delta}\big).
  \end{equation*}
\end{lemma}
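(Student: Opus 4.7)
The plan is to combine a union bound over a lattice discretization with the moderate deviation estimates of Lemma~\ref{lem:L:tail} and the expected-length deficit afforded by Lemma~\ref{lem:s:t}. Set $K:=n^{2/3}(\log n)^{1/3+\delta}$. Since every segment of the LIP between consecutive Poisson points is a straight line in the $(t,s)$ plane and $|s|$ on such a segment attains its extreme values at an endpoint, the event $\{S>K\}$ forces the existence of a Poisson vertex $P^{*}=(t^{*},s^{*})$ of the LIP with $|s^{*}|\ge K$.

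Next I would discretize. For each lattice point $Q=(t_{0},s_{0})$ with $|s_{0}|\ge K-1$ inside the diamond (of which there are $O(n^{2})$), let $B_{Q}$ be the event that the LIP has a Poisson vertex in the unit box around $Q$. Fix the deterministic points $Q_{+}:=Q+(\sqrt{2},0)$ and $Q_{-}:=Q-(\sqrt{2},0)$ in the $(t,s)$ plane; these satisfy $Q_{-}\le P\le Q_{+}$ in the $(x,y)$ partial order for any $P$ in that unit box, since $(\pm 1,\pm 1)$-shifts in $(x,y)$ correspond to $(\pm\sqrt{2},0)$-shifts in $(t,s)$. On $B_{Q}$, monotonicity of $L(\cdot,\cdot)$ in its endpoints gives
\[
L_{n}\;\le\;L\big((0,0),Q_{+}\big)+L\big(Q_{-},(n,0)\big)\;=:\;L_{1}+L_{2}.
\]
Using $L_{t,s}\equald L_{\sqrt{t^{2}-s^{2}}}$ and Lemma~\ref{lem:s:t}(1), together with the super-additivity $E_{t_{0}}+E_{n-t_{0}}\le E_{n}$ and $\tfrac{1}{t_{0}}+\tfrac{1}{n-t_{0}}\ge\tfrac{4}{n}$, one obtains
\[
\E L_{1}+\E L_{2}\;\le\;E_{n}-\frac{c\,s_{0}^{2}\,n}{t_{0}(n-t_{0})}+O(1)\;\le\;E_{n}-\frac{4c K^{2}}{n}+O(1),
\]
with $K^{2}/n=n^{1/3}(\log n)^{2/3+2\delta}$.

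With $\xi:=c\,n^{1/3}(\log n)^{2/3+2\delta}$ for a sufficiently small positive constant, I would split
\[
\pr(S>K)\;\le\;\pr(L_{n}<E_{n}-\xi)+\sum_{Q}\pr\big(L_{1}+L_{2}\ge E_{n}-\xi\big).
\]
The first term, via the lower-tail bound of Lemma~\ref{lem:L:tail}(2) with deviation parameter $T=\Theta((\log n)^{2/3+2\delta})$, is at most $C\exp(-c'(\log n)^{2+6\delta})$, comfortably below the target. For each summand, the inequality forces $L_{1}+L_{2}\ge\E L_{1}+\E L_{2}+\Omega(K^{2}/n)$, so at least one of $L_{i}-\E L_{i}$ exceeds $\Omega(K^{2}/n)$; the upper-tail bound of Lemma~\ref{lem:L:tail}(1), transferred to $L_{t,s}$ by Lemma~\ref{lem:s:t}(3), with $T=\Theta((\log n)^{2/3+2\delta})$, yields $C\exp(-c''(\log n)^{1+3\delta})$ per summand, since $(2/3+2\delta)\cdot\tfrac{3}{2}=1+3\delta$. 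As $\log(n^{2})=o((\log n)^{1+3\delta})$ for $\delta>0$, the $O(n^{2})$-union bound is absorbed into the exponent, producing the claimed estimate. Note that independence of $L_1$ and $L_2$ is not needed since only a union bound on marginal probabilities is used.

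The main technical nuisance is the bookkeeping in the discretization: monotonicity of $L$ is with respect to the $(x,y)$ partial order while boxes live in $(t,s)$, necessitating the shift-based choice of $Q_{\pm}$ above. A secondary concern is the regime where $(t_{0},s_{0})$ is close enough to the boundary of the diamond that the hypothesis $|s|/t\le\alpha_{0}$ of Lemma~\ref{lem:s:t}(1) fails. In that case I would argue separately using $\E L_{1}\le\sqrt{2}\sqrt{t_{0}^{2}-s_{0}^{2}}+O(1)$, which produces an expected-length deficit of order $\Omega(K)\gg K^{2}/n$, making the argument strictly easier there.
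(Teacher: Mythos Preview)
Your proof is correct and follows essentially the same approach as the paper: lattice discretization, expectation deficit via Lemma~\ref{lem:s:t}, the tail bounds of Lemma~\ref{lem:L:tail}, and a union bound over $O(n^{2})$ lattice points. The only differences are cosmetic --- the paper handles the discretization error via the event $A_{0}$ of Lemma~\ref{lem:grid} rather than your $Q_{\pm}$ monotonicity trick, and it restricts attention to lattice points at height $|s|\approx K$ (via a crossing argument) rather than all $|s_{0}|\ge K-1$, thereby sidestepping the separate $|s_{0}|/t_{0}>\alpha_{0}$ case you treat in your final paragraph.
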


\begin{proof}
Assume the event $A_0$ occurs as the probability of $A_0^c$ is much
smaller than the probability we are interested in. Put a lattice of
side length $1$ inside the square with corner points $(0,0)$ and
$(n,0)$. If $S > n^{2/3}\left(\log n\right)^{\frac{1}{3}+\delta}$ there
exists a point $(t,s)$ of the lattice with $\big\vert |s|-n^{2/3}\left(\log
n\right)^{\frac{1}{3}+\delta}\big\vert \le 1$ such that
\begin{equation*}
  L((0,0), (n,0)) \le L((0,0), (t,s)) + L((t,s), (n,0)) +
  (\log n)^2.
\end{equation*}
However, for each such $(t,s)$
 \begin{equation*}\begin{split}
  E_n - E_{t,s} - E_{n - t, -s}  & = (E_n-E_t-E_{n-t}) + (E_t-E_{t,s}) + (E_{n-t}-E_{n-t,-s})
  \end{split}\end{equation*}
The first term in parenthesis on the right is of order $n^{1/3}$. The second and third terms in paranthesis are clearly nonnegative. When $|s|/t\le \alpha_0$ (the constant appearing in Lemma \ref{lem:s:t}) we have by \eqref{eq:E:diff} that $E_t - E_{t,s} \ge \frac{cs^2}{t}$. On the other hand when $|s|/t > \alpha_0$ we have $|s|/(n-t) \le\alpha_0$ and therefore $E_{n-t}-E_{n-t,-s} \ge \frac{cs^2}{n-t}$. In either case we obtain
\be
 E_n - E_{t,s} - E_{n - t, -s} \ge cn^{1/3}(\log n)^{2/3+2\delta}
\ee
since $\big\vert |s|-n^{2/3}\left(\log
n\right)^{\frac{1}{3}+\delta}\big\vert \le 1$. The standard deviations of $\sigma_n, \sigma_{t,s}$ and $\sigma_{n-t,-s}$
are all of order at most $n^{1/3}$. Thus, by
\eqref{eq:up:tail}, \eqref{eq:low:tail} and similar bounds in Lemma \ref{lem:s:t} it follows
that
\bes
\begin{split}
& \pr\left(  L((0,0), (n,0)) \le L((0,0), (t,s)) + L((t,s), (n,0)) +
  (\log n)^2\right) \\
& \le \pr\left(| L_{t,s} - E_{t,s} | >c n^{1/3}(\log n)^{2/3+2\delta} \right) +  \pr\left(| L_{n-t,-s} - E_{n-t,-s} | >c n^{1/3}(\log n)^{2/3+2\delta} \right) \\
& \qquad  \qquad \qquad+  \pr\left(| L_n- E_{n} | >c n^{1/3}(\log n)^{2/3+2\delta} \right) \\
 &\le  C\exp\big(-c(\log n)^{1 + 3\delta}\big).
\end{split}
\ees
 Therefore the probability that a maximal path from $(0,0)$ to $(n,0)$
passes at distance $1$ from the lattice point $(t,s)$ is at most $C\exp(-c(\log
n)^{1+3\delta})$. As there are less than $Cn^2$ points in our
lattice, the lemma follows.
\end{proof}
The following definition will be useful for us. It simplifies the exposition considerably.
\begin{definition} We say an event $A$ depending on the points in $\big[0,n]^2$
occurs with ``overwhelming probability" if there is a $\theta>0$ such that
\bes
\pr(A^c) \le \exp\big(-c (\log n)^{1+\theta}\big).
\ees
\end{definition}
In particular the event $\{S\le n^{\frac23}(\log n)^{\frac13+\delta}\}$ occurs with overwhelming probability.
A consequence of the above definition which we shall use several times without
explicitly referring to is
\begin{lemma} Let the event $A$ depending on the points in $\big[0,n\big]^2$ occur with
overwhelming probability. Then for every $\eps>0$ and $k\ge 1$
\bes
\big \| L_{\sqrt 2 n}  \cdot \ind_{A^c} \big\|_k  \le n^\eps
\ees
for $n$ large enough.
\end{lemma}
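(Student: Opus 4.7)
The plan is to apply Cauchy--Schwarz to decouple the moments of $L_{\sqrt 2 n}$ from the probability of the small event $A^c$, then observe that moments of $L_{\sqrt 2 n}$ grow only polynomially in $n$ while $\pr(A^c)$ decays faster than any polynomial.

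More precisely, first I would write, for any fixed $k\ge 1$,
\begin{equation*}
\bigl\| L_{\sqrt 2 n}\cdot \ind_{A^c}\bigr\|_k^k \;=\; \E\bigl[L_{\sqrt 2 n}^{k}\ind_{A^c}\bigr] \;\le\; \bigl\| L_{\sqrt 2 n}\bigr\|_{2k}^{k}\cdot \pr(A^c)^{1/2},
\end{equation*}
by Cauchy--Schwarz. So the task reduces to controlling the two factors separately.

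For the first factor, I would use the centered moment bound \eqref{eq:mom:bd} from Lemma~\ref{lem:E:Var}: since $\bigl\|L_{\sqrt 2 n}-2 n\bigr\|_{2k}\le C n^{1/3}$, the triangle inequality gives $\bigl\|L_{\sqrt 2 n}\bigr\|_{2k}\le 2n+ Cn^{1/3}\le C(k)\,n$ for $n$ large. (Alternatively, one could just bound $L_{\sqrt 2 n}$ by the total Poisson count in $[0,n]^2$, whose $2k$-th moment is $O(n^{2})^k$, which is still sufficient.) For the second factor, the definition of ``overwhelming probability'' directly yields a $\theta>0$ with $\pr(A^c)\le \exp\!\bigl(-c(\log n)^{1+\theta}\bigr)$.

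Combining the two estimates gives
\begin{equation*}
\bigl\| L_{\sqrt 2 n}\cdot \ind_{A^c}\bigr\|_k \;\le\; C(k)\, n\cdot \exp\!\bigl(-c(\log n)^{1+\theta}/(2k)\bigr).
\end{equation*}
Since $(\log n)^{1+\theta}$ grows faster than any constant multiple of $\log n$, the exponential factor beats $n^{1-\eps}$ for $n$ large enough, so the right-hand side is $\le n^\eps$, which is exactly the claim.

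There is no real obstacle here: the only thing to check is that $\theta>0$ makes the exponential decay super-polynomial, which is immediate. The usefulness of the lemma is in later sections where it lets one truncate good events without worrying about $L^p$ costs.
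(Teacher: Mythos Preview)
Your proof is correct. It differs from the paper's argument, which does not invoke the moment bound \eqref{eq:mom:bd} at all: instead the paper bounds $\pr(L_{\sqrt 2 n}\ge m)$ crudely via the total Poisson count $K_n$ in $[0,n]^2$ (using $\pr(K_n\ge m)\le n^2\pr(\text{Poisson}(1)>m/n^2)$), and then splits $\E[L_{\sqrt 2 n}^k\ind_{A^c}]$ according to whether $L_{\sqrt 2 n}\le n^3$ or lies in a shell $[in^3,(i+1)n^3]$, handling the first piece with the overwhelming-probability bound and the tail pieces with the super-exponential Poisson tail. Your Cauchy--Schwarz route is shorter and cleaner, since it reuses the BDJ moment estimate already recorded in Lemma~\ref{lem:E:Var}; the paper's route is more self-contained in that it needs nothing beyond elementary Poisson tail bounds, which is arguably more in keeping with the triviality of the statement. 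Either way the content is the same: polynomial growth of moments of $L_{\sqrt 2 n}$ is beaten by the super-polynomial decay of $\pr(A^c)$.
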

\begin{proof} Let $K_n$ be the number of Poisson points in $[0,n]^2$. The following
follows from a union bound by splitting the square into squares of side length $1$.
\bes \begin{split}
\pr\big(L_{\sqrt 2n} \ge m\big) & \le \pr\big( K_n \ge m\big) \\
& \le n^2 \pr \Big(\text{Poisson}(1)> \frac{m}{n^2}\Big) \le \exp\Big(-\frac{m}{n^2}\cdot\log \left(\frac{m}{n^2}\right)\Big)
\end{split}\ees
Therefore
\bes \begin{split}
\E\big(L_{\sqrt 2n}^k \cdot \ind_{A^c}\big) & \le \sum_{m \le n^3} n^{3k} \E\big(\ind_{A^c}\cdot \ind\{L_{\sqrt 2 n}\le n^3\}\big) \\
&\qquad+ \sum_{i=1}^{\infty} n^{3(i+1)k}  \E\big(\ind_{A^c}\cdot \ind\{in^3\le L_{\sqrt 2 n}\le (i+1)n^3\}\big)
\end{split}\ees
which is smaller than $n^\eps$ when $n$ is large enough, since $A$ occurs with overwhelming probability and we have a fast decay in the tail probabilities of $L_{\sqrt2 n}$ from Lemma \ref{lem:L:tail}. We will leave the details of this
computation to the interested reader.
\end{proof}

\subsection{BK inequality} We now state a generalization of the classical BK inequality in site percolation suitable for our case. Denote by $\Omega$ the collection of all realizations $\omega$ of the Poisson point process on $\bR_+^2$. We denote $\omega \subseteq \omega'$ if all the Poisson points in $\omega$ are also present in $\omega'$ and $\omega' \backslash \omega$ will then be the realization with all the points in $\omega$ removed. For a subset $K \subseteq \bR_+^2$ we denote $\omega_K$ to be the realization containing only the points of $\omega$ in $K$.  A set $A \subseteq \Omega$ is increasing if $\omega \in A$ implies $\omega' \in A$ when $\omega \subseteq \omega'$. For increasing sets $A$ and $B$ define the disjoint occurence
 \bes \begin{split}
A \Box B &=  \{\omega: \text{there exist disjoint regions } K,\, L\subseteq \bR_+^2 \text{ such that } \omega_K \in A \text{ and } \omega_L \in B \} \\
&= \{\omega: \text{there exists } \omega' \subseteq \omega \text{ such that } \omega' \in A \text{ and } \omega \backslash \omega' \in B\}.
\end{split}\ees
We then have
\begin{lemma}\cite{berg} \label{lem:bk}
For increasing events $A, B \subseteq \Omega$,
\bes
\pr(A\Box B) \le \pr(A)\cdot \pr(B).
\ees
\end{lemma}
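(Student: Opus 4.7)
The plan is to reduce the continuum BK inequality to the classical discrete van den Berg--Kesten inequality for independent Bernoulli variables by a standard discretization and limiting argument. First I would reduce to a bounded window: since any increasing event $A \subseteq \Omega$ depending on the Poisson process on $\bR_+^2$ is determined, up to a null event, by the Poisson points in $\bigcup_M [0,M]^2$, continuity of probability shows that it suffices to prove $\pr(A \Box B) \le \pr(A) \pr(B)$ under the assumption that both $A$ and $B$ are measurable with respect to the restriction of $\omega$ to a fixed bounded square $[0,M]^2$.

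Next, I would partition $[0,M]^2$ into $N^2$ congruent subsquares of side $M/N$ and define Bernoulli indicators $Y_i = \ind\{\text{cell } i \text{ is non-empty}\}$, which are independent. Let $G_N$ be the event that every cell contains at most one Poisson point; a union bound using $\pr(\text{Poisson}(M^2/N^2) \ge 2) = O(N^{-4})$ per cell yields $\pr(G_N^c) = O(N^{-2})$. On $G_N$, the Poisson configuration is completely determined (modulo the irrelevant precise position within each cell) by $(Y_1, \ldots, Y_{N^2})$, and I would construct discrete increasing events $A_N, B_N \subseteq \{0,1\}^{N^2}$ --- using the canonical ``place one point at the centre of each occupied cell'' transcription --- that agree with $A, B$ on $G_N$. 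This gives $\pr(A_N) = \pr(A) + o(1)$ and similarly for $B$, and the classical BK inequality for products of independent Bernoulli variables yields $\pr(A_N \Box B_N) \le \pr(A_N) \pr(B_N)$.

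The main obstacle, and the step requiring the most care, is to pass to the limit in the left-hand side, namely to show $\liminf_N \pr(A_N \Box B_N) \ge \pr(A \Box B) - o(1)$. The key observation is that on $G_N$ distinct Poisson points inhabit distinct cells, so any pair of disjoint witnessing regions $(K, L) \subseteq \bR_+^2$ for $A \Box B$ induces a partition of the occupied cells into two disjoint index sets witnessing $A_N \Box B_N$, since both events are increasing and depend only on the occupancy pattern on $G_N$. Conditioning on $G_N$ and using $\pr(G_N) \to 1$ therefore yields $\pr(A \Box B) \le \pr(A_N \Box B_N) + o(1)$. Sending $N \to \infty$ and then $M \to \infty$ completes the proof.
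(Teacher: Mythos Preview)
The paper does not supply its own proof of this lemma; it is stated with a citation to van den Berg's paper and used as a black box. So there is no paper-proof to compare against.

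Your discretization strategy is the standard route to the continuum BK inequality, but there is a genuine gap at the step where you claim to construct discrete events $A_N, B_N$ that ``agree with $A,B$ on $G_N$''. On $G_N$ each occupied cell contains exactly one Poisson point, but the occupancy vector $(Y_1,\dots,Y_{N^2})$ does \emph{not} determine the location of that point within its cell. The centre-point transcription produces a configuration that is different from the actual Poisson configuration, and for a general measurable increasing event $A$ there is no reason that membership of the true configuration in $A$ coincides with membership of the centre-point configuration in $A$. Consequently the assertions $\pr(A_N)=\pr(A)+o(1)$ and ``$A$ depends only on the occupancy pattern on $G_N$'' are unjustified; for pathological increasing events they are false.

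What is missing is an additional approximation layer: one first reduces to increasing events that genuinely depend only on the point counts in the cells of some fixed finite partition (so-called cylinder events), for which your argument goes through verbatim, and then shows that an arbitrary increasing event in the Poisson $\sigma$-algebra can be approximated in probability by such cylinder events in a way compatible with the $\Box$ operation. Alternatively, one can bypass the spatial discretization entirely by writing the unit-intensity Poisson process as a superposition of $N$ independent intensity-$1/N$ processes and applying the discrete BK inequality to that product structure. Either route closes the gap; as written, your sketch does not.
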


\section{Lower Bound of Expectation in Blocks} \label{sec:exp:lbd}
Consider the diagonal rectangle $R$ of length $\ell$
and width $w$. Consider two points $(0,a)$ and $(\ell,b)$
on opposite sides of the rectangle. Recall that $E^R\big[(0,a),(\ell,b)\big]=\E L^R\big((0,a),(\ell,b)\big)$  is the expected value
of the restricted LIP between the points $(0,a)$ and $(\ell,b)$.
The following proposition shows that $E^R\big[(0,a),(\ell,b)\big]$ is not very far
from $E_{\ell} $.
\begin{proposition}\label{prop:ER:lbd}
Fix $0<\delta<\frac13$ and let $ \ell^{\frac23} (\log \ell)^{\frac13+\delta} \le w \le \alpha_0 \ell$ with
$\ell$ large enough and $\alpha_0$ as in Lemma \ref{lem:s:t}. There exists a constant
 $\mathcal{C}=\mathcal{C}(\delta)$ such that for any $0\le a, b\le w$,
\bes
E_{\ell}- \mathcal{C}\frac{w^2}{\ell} \le E^R\big[(0,a),(\ell,b)\big] \le E_{\ell}.
\ees
\end{proposition}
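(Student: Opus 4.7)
Restricting the LIP to $R$ cannot increase its length, so $E^R[(0,a),(\ell,b)] \le E[(0,a),(\ell,b)] = E_{\sqrt{\ell^2-(b-a)^2}} \le E_\ell$, by the distributional identity recalled after Lemma~\ref{lem:E:Var} and the monotonicity of $E_t$ (itself immediate from super-additivity).

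\textbf{Lower bound.} The plan is to exhibit an explicit increasing path in $R$ from $(0,a)$ to $(\ell,b)$ whose expected length is at least $E_\ell - \mathcal{C}(\delta)w^2/\ell$. Fix $\delta'\in(0,\delta)$ and set $F:=\ell^{2/3}(\log\ell)^{1/3+\delta'}$, $h:=2F$; the hypothesis $w \ge \ell^{2/3}(\log\ell)^{1/3+\delta}$ forces $h\le w/2$ for $\ell$ large. First consider the \emph{safe-interior} case $a,b \in [h,w-h]$: by Lemma~\ref{lem:tr_fl}, applied after an affine rescaling that maps the line segment from $(0,a)$ to $(\ell,b)$ to the $t$-axis of a square, the unrestricted LIP between $(0,a)$ and $(\ell,b)$ lies inside $R$ with overwhelming probability. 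Combined with a Cauchy--Schwarz bound on the exceptional event and with Lemma~\ref{lem:s:t}, this yields
\begin{equation*}
E^R[(0,a),(\ell,b)] \ge E_{\ell,b-a} - o(1) \ge E_\ell - C(b-a)^2/\ell \ge E_\ell - Cw^2/\ell,
\end{equation*}
using $|b-a| \le w$.

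For general $a,b \in [0,w]$, reduce to the safe-interior case by routing through interior heights $a',b' \in [h,w-h]$ (the projections of $a,b$ onto this interval). Split $R$ into three disjoint sub-rectangles $R_1=[0,t_1]\times[0,w]$, $R_{\text{mid}}=[t_1,\ell-t_1]\times[0,w]$, $R_3=[\ell-t_1,\ell]\times[0,w]$ for a cutoff $t_1$ to be tuned, and concatenate LIPs within each along $(0,a)\to(t_1,a')\to(\ell-t_1,b')\to(\ell,b)$. Since the sub-rectangles are disjoint, the three expected LIP lengths add, and the middle piece is a safe-interior instance, contributing at least $E_{\ell-2t_1,b'-a'}-o(1) \ge E_\ell - O(w^2/\ell) - O(t_1^{1/3})$.

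The main obstacle will be bounding the two short end-pieces, whose endpoints $(0,a)$ and $(\ell,b)$ may lie arbitrarily close to the top or bottom of $R$. A naive empty-travel construction (follow a slope-$\pm 1$ line from the endpoint to the target height, collecting no Poisson points) costs roughly $\sqrt{2}\,t_1\ge\sqrt{2}\,h$ in expected length, producing a loss of order $h\sim\ell^{2/3}(\log\ell)^{1/3+\delta'}$ which exceeds $w^2/\ell$ in the critical regime $w\sim\ell^{2/3}(\log\ell)^{1/3+\delta}$. The resolution will be to take $t_1$ on the order of $w^{3/2}/(\log w)^{1/2+3\delta/2}$, the largest scale at which the proposition's own hypothesis $w\ge t_1^{2/3}(\log t_1)^{1/3+\delta}$ continues to hold for $R_1,R_3$, and bound each end-piece by applying the proposition recursively, yielding at least $E_{t_1}-\mathcal{C}(\delta)w^2/t_1 = \sqrt{2}\,t_1 - O(w^{1/2}(\log w)^{1/2+3\delta/2})$. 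Summing the three contributions gives the claimed bound because $w^{1/2}(\log w)^{1/2+3\delta/2} \le w^2/\ell$ follows from cubing the hypothesis (which gives $w^{3/2} \ge \ell(\log\ell)^{1/2+3\delta/2}$). Closing the bootstrap with a single constant $\mathcal{C}(\delta)$ requires choosing the buffer constant large enough so that losses accumulate additively and a direct base-case treatment at $t_1\sim w$, where the trivial bound $E^{R_1}\ge 0$ already suffices because $E_{t_1}\le\sqrt{2}\,t_1\le\sqrt{2}\,w\le\sqrt{2}\,w^2/t_1$ in that range.
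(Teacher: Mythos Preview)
Your upper bound and your safe-interior case are both correct, and the overall plan of routing through interior heights $a',b'$ matches the paper's reduction (the paper routes through the single point $(\ell/2,w/2)$, which amounts to the same thing). The difficulty is entirely in the two end-pieces, and here your recursive bootstrap does not close.

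There are two intertwined problems. First, the constant blows up: assuming the proposition for the end-piece with constant $\mathcal{C}$ gives a loss of $\mathcal{C}w^2/t_1$ per end, hence $2\mathcal{C}w^2/t_1$ total, which you correctly bound by $2\mathcal{C}w^2/\ell$; adding the middle-piece and sub-additivity losses you arrive at $(2\mathcal{C}+C)w^2/\ell$, and no choice of $\mathcal{C}$ makes $2\mathcal{C}+C\le\mathcal{C}$. Second, the recursion does not progress: your $t_1\sim w^{3/2}/(\log w)^{1/2+3\delta/2}$ depends only on $w$, not on the current length, so after one step the end-piece has length $\approx t_1$ and a further step gives the same $t_1$ again. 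Worse, at the bottom of the hypothesis range $w=\ell^{2/3}(\log\ell)^{1/3+\delta}$ your formula yields $t_1\approx\ell$, so there is no reduction even at the first step, and when $w$ is large it gives $t_1>\ell$. The proposed base case ``$t_1\sim w$'' is therefore never reached (for large $w$ one has $t_1\gg w$).

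The paper resolves the end-piece by an explicit multi-scale construction rather than recursion. Starting from the corner (say $a=0$), it climbs into the interior along a dyadic staircase $(t_k,s_k)$ with $s_0=t_0=\ell^{1/3}$, $s_{k+1}=2s_k$, and $t_{k+1}-t_k=s_k^{3/2}\ell/(2w^{3/2})$, stopping when $s_{k_0}\approx w/2$ after $k_0=O(\log w)$ steps. The horizontal increment is chosen precisely so that the transversal fluctuation of each sub-LIP, of order $(t_{k+1}-t_k)^{2/3}(\log\cdot)^{1/3+\delta}\approx s_k\cdot \ell^{2/3}(\log\ell)^{1/3+\delta}/w\le s_k$, keeps the path inside $R$. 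One then sums the losses $E_{t_{k+1}-t_k}-E_{t_{k+1}-t_k,\,s_{k+1}-s_k}\le C s_k^2/(t_{k+1}-t_k)=C s_k^{1/2}w^{3/2}/\ell$ and the sub-additivity losses $\sim(t_{k+1}-t_k)^{1/3}$; both are geometric in $k$ and sum to $O(w^2/\ell)$. This direct summation, rather than an inductive appeal to the statement itself, is what produces a fixed constant $\mathcal{C}(\delta)$.
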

\begin{proof} The second inequality is immediate so let us consider the first inequality.
It is sufficient to show that
\bes
E^R\big[(0,a), \big(\ell/2,w/2\big)\big] \ge E_{\frac{\ell}{2}} - C \frac{w^2}{\ell}.
\ees
because by symmetry we have a similar relation for $E^R\big[ \big(\ell/2,w/2\big), \big(\ell,b\big)\big]$.
We shall prove this with $a=0$ and indicate at the end how to modify the argument for general $a$.
We shall consider paths which enter the interior of $R$ via successively
larger rectangles such that the paths in each sub rectangle behave just like
unrestricted paths. We make this precise as follows. Let
\bes
t_0=s_0=\ell^{1/3}.
\ees
We now define recursively two sequences
\bes\label{eq:sk:tk}
\begin{split}
s_{k+1}&= 2s_k, \\
t_{k+1}&= t_k +\frac{s_k^{3/2}\ell }{2w^{3/2}}.
\end{split}
\ees
Choose $k_0=O\big(\log_2 (w/\ell^{1/3})\big)$ so that $s_{k_0}\le \frac{w}{2} <s_{k_0+1}$. For this $k_0$ one can see that  $\frac{\ell}{12} \le t_{k_0} \le \frac{\ell}{3}$  because
\bes
t_{k_0} = \ell^{1/3} +\frac{\ell^{3/2}}{2 w^{3/2}}\left[1+2^{3/2}+2^{2\cdot 3/2}+\cdots+ 2^{(k_0-1)\cdot 3/2}\right]
\ees
\begin{figure}[htbp]
   \centering
   \includegraphics[width=350pt]{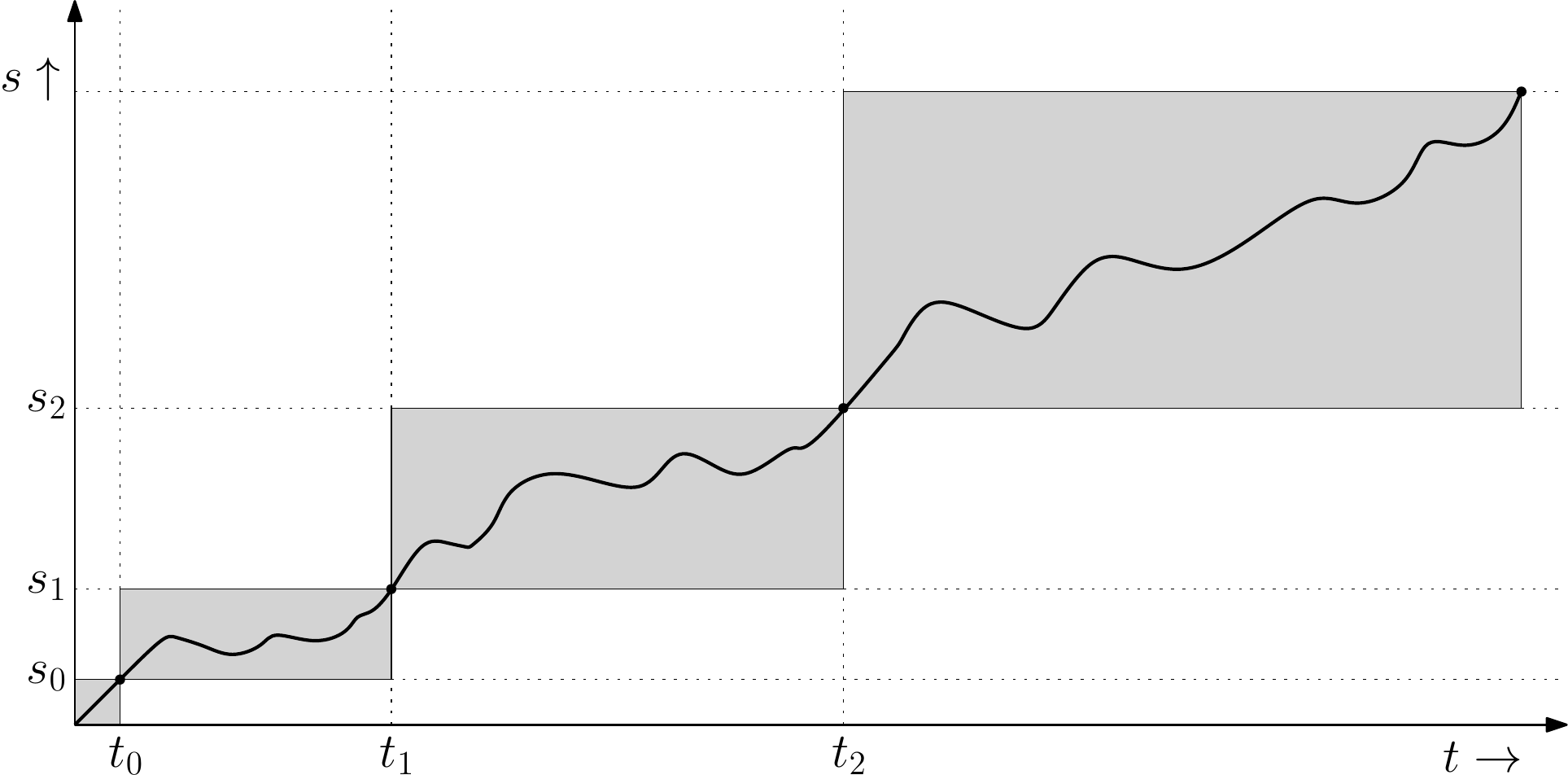}
   \caption{Construction of the restricted path.}
   \label{fig:path}
\end{figure}
We now create an increasing (restricted to $R$) path from $(0,0)$ to $(\ell/2,w/2)$ by first moving directly to $(t_0,s_0)$ (not passing through any Poisson points) and then for each $0\le k < k_0$, taking a LIP in $R$ from $(t_k,s_k)$ to $(t_{k+1},s_{k+1})$. By using the transversal fluctuation Lemma~\ref{lem:tr_fl}, as
    $t_{k+1} - t_k$ is short compared with $s_k$, we may treat each
    such LIP as an unrestricted LIP (not constrained to lie in $R$)
    as the unrestricted and restricted LIP coincide with overwhelming
    probability. In the last step we take a LIP in $R$ from $(t_{k_0},s_{k_0})$
    to $(\ell/2,w/2)$ which again coincides
    with the unrestricted LIP with overwhelming probability. In particular one can show that
    \be \label{eq:first}
    \big\vert E^R\big((t_k,s_k), (t_{k+1},s_{k+1})\big) - E\big((t_k,s_k), (t_{k+1},s_{k+1})\big) \big\vert \le \ell^{1/6}
    \ee
    and
    \be \label{eq:second}
     \Big\vert E^R\big((t_{k_0},s_{k_0}),(\ell/2,w/2)\big) - E\big((t_{k_0},s_{k_0}), (\ell/2,w/2)\big) \Big\vert \le \ell^{1/6}.
    \ee
 Let us show the first inequality \eqref{eq:first} above and leave the second \eqref{eq:second} to the reader.
 \bes
 \begin{split}
 &\big\vert E^R\big((t_k,s_k), (t_{k+1},s_{k+1})\big) - E\big((t_k,s_k), (t_{k+1},s_{k+1})\big) \big\vert  \\
 & \le 2 \E \left[L\big((t_k, s_k),\, (t_{k+1}, s_{k+1})\big) \cdot \ind \{\text{path goes out of the rectangle with endpoints } (t_k, s_k),\, (t_{k+1}, s_{k+1})\} \right] \\
 & \le 2\ell \cdot \pr\left(\{\text{path goes out of the rectangle with endpoints } (t_k, s_k),\, (t_{k+1}, s_{k+1})\}\right) \\
 &\qquad +\sum_{i=0}^\infty (2\ell + i \ell^{1/3})\cdot \pr\left(2\ell +i\ell^{1/3}\le L\big((t_k, s_k),\, (t_{k+1}, s_{k+1})\big)\le 2\ell +(i+1)\ell^{1/3}\right).
 \end{split}
 \ees
 Thanks to the tail bounds in Lemma \ref{lem:L:tail} and the bound on the probability of large transversal fluctuations in Lemma \ref{lem:tr_fl}, the inequality \eqref{eq:first} follows for all large $\ell$.

Since the maximal restricted path in $R$ has larger length than the length of the path obtained from our construction
we get, using Lemma \ref{lem:s:t}
\bes
\begin{split}
& E_{\frac{\ell}{2}} - E^R\big[(0,a), (\ell/2,w/2)\big] \\
&\quad  \le E_{\frac{\ell}{2}} -\sum_{k=0}^{k_0-1}E\big((t_k,s_k), (t_{k+1},s_{k+1})\big) - E\big((t_{k_0},s_{k_0}), (\ell/2,w/2)\big) + C\ell^{1/6} k_0\\
&\quad \le E_{\frac{\ell}{2}} -\sum_{k=0}^{k_0-1}\Big[E_{t_{k+1}-t_k} - C\frac{(s_{k+1}-s_k)^2}{t_{k+1}-t_k}\Big] - \Big[E_{\frac{\ell}{2}-t_{k_0}} - C\frac{(w/2)^2}{\frac{\ell}{2}-t_{k_0}}\Big]+ C\ell^{1/6} k_0 \\
&\quad \le E_{\frac{\ell}{2}} -\sum_{k=0}^{k_0-1} E_{t_{k+1}-t_k} -E_{\frac{\ell}{2}-t_{k_0}} + C\Big[\sum_{k=0}^{k_0-1}\frac{(s_{k+1}-s_k)^2}{t_{k+1}-t_k} + \frac{(w/2)^2}{\frac{\ell}{2}-t_{k_0}}\Big] + C\ell^{1/6} k_0 \\
&\quad \le C \ell^{1/3} + C \sum_{k=0}^{k_0-1} \Big\lbrace \frac{s_k^{3/2}\ell}{2 w^{3/2}}\Big\rbrace^{1/3} + C \Big[\sum_{k=0}^{k_0-1} \frac{s_k^2}{s_k^{3/2}\ell/2w^{3/2}}+ C\frac{w^2}{\ell}\Big]  + C \ell^{1/6}k_0\\
& \quad \le C \ell^{1/3}+ \frac{C\ell^{1/3}}{2^{1/3}w^{1/2}} \sum_{k=0}^{k_0-1} 2^{k/2} \ell^{1/6} + C \sum_{k=0}^{k_0-1} \frac{2^{k/2}\ell^{1/6}w^{3/2}}{\ell} + C \frac{w^2}{\ell} + C\ell^{1/6} k_0 \\
& \quad \le C \frac{w^2}{\ell}.
\end{split}
\ees
We have used the bound to $2^{k_0/2} \le w^{1/2}/ \ell^{1/6}$ to arrive at the final step.

Now we explain how the argument can be extended for any $a$.
For $a\le \ell^{1/3}$, we follow the same rectangles. For $\ell^{1/3}<a\le  w/2$ and $s_i < a\le s_{i+1}$, we take the
maximal path to $(t_{i+1}, s_{i+1})$ (which will remain in $R$ with overwhelming probability) and then follow the remaining
rectangles. The argument for $a\ge w/2$ follows along the same lines by symmetry.
\end{proof}

\section{Upper bounds in blocks} \label{sec:up:bd}
Consider a diagonal rectangle $R$ of width $w$ and length $\ell$. Let
\begin{align}\label{eq:Delta}
\Delta(R) := \max_{\vx\in B_l, \, \vy\in B_r} L^R(\vx,\vy) - \min_{\vx\in B_l, \, \vy\in B_r} L^R(\vx,\vy).
\end{align}
Here $B_l,\, B_r$ are the left and right boundaries of $R$.
\begin{proposition} \label{prop:Delta:bd} Fix $k\ge 1$ and $0<\delta<\frac13$ and let $\ell^{\frac23}(\log \ell)^{\frac13+\delta}\le w\le \alpha_0\ell$ with $\alpha_0$ as in Lemma \ref{lem:s:t}. There exists a constant $C=C(\delta,k)$ such that
\bes
\left \| \Delta(R) \right\|_k \le  C\frac{w^2}{\ell}
\ees
for large enough $\ell$.
\end{proposition}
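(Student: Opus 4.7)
I reduce $\Delta(R)$ to a centered maximum and bound that maximum by combining pointwise moderate-deviation estimates for $L^R(\vx,\vy)$ with a union bound over discretized endpoints. Elementary manipulation gives
\begin{equation*}
\Delta(R) \le 2\max_{\vx\in B_l,\,\vy\in B_r}\bigl|L^R(\vx,\vy)-E_\ell\bigr| \le 2\max_{\vx,\vy}\bigl|L^R(\vx,\vy)-E^R(\vx,\vy)\bigr| + 2\mathcal{C}\,\frac{w^2}{\ell},
\end{equation*}
where the second inequality uses Proposition~\ref{prop:ER:lbd}. It suffices, therefore, to show that the centered maximum on the right has $L^k$-norm of order $w^2/\ell$. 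The hypothesis $w\ge \ell^{2/3}(\log\ell)^{1/3+\delta}$ is decisive: the target scale $w^2/\ell$ exceeds the typical single-LIP fluctuation $\ell^{1/3}$ by a polylogarithmic factor, leaving enough room for an exponential moderate-deviation estimate to absorb a union bound over the $O(w^2)$ boundary endpoint pairs.

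For each fixed boundary pair $(\vx,\vy)$ I will establish two-sided tail bounds for $L^R(\vx,\vy) - E^R(\vx,\vy)$. The upper tail follows from $L^R(\vx,\vy)\le L(\vx,\vy)=L_{\ell,\,b-a}$ combined with the upper-tail bound of Lemma~\ref{lem:L:tail} (transported by Lemma~\ref{lem:s:t}(3)), giving $\pr(L^R-E^R > T\ell^{1/3})\le C\exp(-cT^{3/2})$. For the lower tail I will adapt the explicit path $\pi_0$ constructed in the proof of Proposition~\ref{prop:ER:lbd}: its length is a sum of \emph{independent} unrestricted LIPs in the nested sub-rectangles whose cumulative variance is of order $\ell^{2/3}$, and each summand inherits the stronger sub-Gaussian-type lower tail of \eqref{eq:low:tail}. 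A Bernstein-style concentration for this sum then produces $\pr(L^R-E^R < -T\ell^{1/3}) \le C\exp(-cT^{3})$.

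The conclusion is by discretization and moment transfer. I discretize $B_l$ and $B_r$ to unit lattices containing $O(w)$ points each; on the event $A_0$ of Lemma~\ref{lem:grid}, snapping continuous endpoints to the nearest lattice point perturbs $L^R$ by at most $(\log\ell)^2$, which is $o(w^2/\ell)$ under the hypothesis on $w$. Choosing $T$ of order $w^2/\ell^{4/3}$ makes $T\ell^{1/3}$ of order $w^2/\ell$, and the hypothesis yields $T^{3/2}\ge (\log\ell)^{1+3\delta}$ and $T^3\ge (\log\ell)^{2+6\delta}$; a union bound over the $O(w^2)\le O(\ell^2)$ lattice pairs then shows $\max_{\vx,\vy}|L^R(\vx,\vy)-E^R(\vx,\vy)|\le Cw^2/\ell$ with overwhelming probability. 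On the complementary bad event, $L^R$ is dominated by the total Poisson count $K_R$ of $R$ (whose $L^k$-norm is polynomial in $\ell w$), while the bad-event probability has super-polynomial decay, so together they make the bad-event contribution to $\|\Delta(R)\|_k$ negligible, completing the proof.

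The main obstacle is supplying the lower-tail estimate for $L^R(\vx,\vy)$ without circularly invoking the concentration results proved later in the paper (notably Theorem~\ref{thm:mom:ubd}); my plan circumvents this by using only the explicit telescoping independent-sums structure already present in the proof of Proposition~\ref{prop:ER:lbd}, together with Lemma~\ref{lem:L:tail}'s lower-tail estimate for each summand.
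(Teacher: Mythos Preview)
Your overall strategy coincides with the paper's: both control the upper deviation via $L^R\le L$ together with Lemma~\ref{lem:L:tail}, control the lower deviation via the explicit telescoping path $\tilde L^R$ built in the proof of Proposition~\ref{prop:ER:lbd}, and then discretize the boundary and take a union bound. The paper packages the reduction slightly differently, writing
\[
\Delta(R)\le \max_{\vx,\vy}\bigl[L(\vx,\vy)-\E L(\vx,\vy)\bigr] \;+\; \bigl[\max_{\vx,\vy}\E L(\vx,\vy)-\min_{\vx,\vy}\E L(\vx,\vy)\bigr] \;-\; \min_{\vx,\vy}\bigl[\tilde L^R(\vx,\vy)-\E L(\vx,\vy)\bigr],
\]
but the content is the same as yours.

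There is, however, a genuine gap in your discretization step. You assert that on the event $A_0$, snapping an endpoint to a \emph{unit} lattice perturbs $L^R$ by at most $(\log\ell)^2$. This is false. In the original $(x,y)$ coordinates the reachability cone from $(0,a)$ is $\{y\ge a/\sqrt2,\ x\ge -a/\sqrt2\}$, so moving the left endpoint from $(0,a)$ to $(0,a')$ with $0<a'-a\le 1$ removes from the reachable set the horizontal strip $\{a/\sqrt2\le y< a'/\sqrt2\}$. The portion of this strip lying in $R$ has $x$-extent of order $w$ and hence area of order $w$, not $O(1)$. An LIP from $(0,a)$ can collect order $w$ Poisson points inside this strip, none of which are reachable from $(0,a')$; thus the perturbation can be of order $w$, which dominates $w^2/\ell$ throughout the range $w\le\alpha_0\ell$. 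The fix is exactly what the paper does: take mesh $\ell^{-1}$ rather than $1$, giving $O(w\ell)$ lattice points on each side. Then the corresponding strip has area $O(w/\ell)\le O(1)$, so on $A_0$ the perturbation is genuinely at most $(\log\ell)^2$, and the union bound over $O(w^2\ell^2)\le O(\ell^4)$ pairs is still absorbed by the tail decay $\exp\bigl(-c(\log\ell)^{1+3\delta}\bigr)$.

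A secondary point: your ``Bernstein-style'' $\exp(-cT^3)$ lower-tail bound for the sum is stronger than needed and not justified as stated. It is cleaner (and this is what the paper does) to union-bound over the $k_0+1=O(\log\ell)$ independent pieces of $\tilde L^R$: for each, $\pr\bigl(|X_k-\E X_k|>(t_{k+1}-t_k)^{1/3}(B\log\ell)^{2/3}\bigr)\le\ell^{-D}$ for $B$ large, and on the complementary event $\sum_k|X_k-\E X_k|=O\bigl(\ell^{1/3}(\log\ell)^{2/3}\bigr)=o(w^2/\ell)$, which is all you need.
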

\begin{proof}
We first bound $\Delta(R)$ as follows.
\begin{align}\label{eq:Del:bd}\begin{split}
\Delta(R) &\le \max_{\vx\in B_l, \, \vy\in B_r} \big[ L(\vx,\vy) -\E L(\vx,\vy)\big]  + \left[\max_{\vx\in B_l, \, \vy\in B_r} \E L(\vx,\vy) -\min_{\vx\in B_l, \, \vy\in B_r} \E L(\vx,\vy) \right] \\
&\qquad  - \min_{\vx\in B_l, \, \vy\in B_r} \big[ L^R(\vx,\vy) -\E L(\vx,\vy)\big] \\
&\le \max_{\vx\in B_l, \, \vy\in B_r} \big[ L(\vx,\vy) -\E L(\vx,\vy)\big]  + \left[\max_{\vx\in B_l, \, \vy\in B_r} \E L(\vx,\vy) -\min_{\vx\in B_l, \, \vy\in B_r} \E L(\vx,\vy) \right] \\
&\qquad  - \min_{\vx\in B_l, \, \vy\in B_r} \big[ \tilde{L}^R(\vx,\vy) -\E L(\vx,\vy)\big]
\end{split}
\end{align}
where $\tilde{L}^R(\vx,\vy)$ are restricted paths from $\vx$ to $\vy$ passing through the sub-rectangles constructed as in Proposition \ref{prop:ER:lbd}.
By Lemma \ref{lem:s:t},
\bes
 \left[\max_{\vx\in B_l, \, \vy\in B_r} \E L(\vx,\vy) -\min_{\vx\in B_l, \, \vy\in B_r} \E L(\vx,\vy) \right] \le C\frac{ w^2}{\ell}.
\ees
Let us next look at the first term on the right hand side of \eqref{eq:Del:bd}. We divide both $B_l$ and $B_r$
into intervals of length $\ell^{-1}$. Let $\va_0,\,\va_1,\cdots, \va_{[w\ell]}$ denote the partition points
on the left boundary and let $\vb_0,\,\vb_1,\cdots, \vb_{[w\ell]}$ denote the partition points on the right boundary. We claim
that
\bes
\Big\|\max_{\vx\in B_l, \, \vy\in B_r} \big[ L(\vx,\vy) -\E L(\vx,\vy)\big]\Big\|_k   \le \Big\|\max_{i,j} \big[ L(\va_i,\vb_j) -\E L(\va_i,\vb_j)\big]\Big\|_k  +  C(k) \frac{w^2}{\ell}.
\ees
This is because with overwhelming probability the number of points in each rectangle
with vertices $\va_i,\va_{i+1},\vb_{j},\vb_{j+1}$ is at most $(\log \ell)^2$.
 An application of the tail bounds in Lemma \ref{lem:s:t}
along with a union bound gives
\begin{align}\label{eq:tail:ab}
\pr\left(\left | L(\va_i,\vb_j) -\E L(\va_i, \vb_j)\right | > \ell^{\frac13} (\log \ell)^{\frac23+2\delta}  \text{ for some } i,j \right) \le \exp\big(-c(\log \ell)^{1+3\delta}\big).
\end{align}
From this it follows that
\bes
\Big\|\max_{i,j} \big[ L(\va_i,\vb_j) -\E L(\va_i,\vb_j)\big]\Big\|_k  \le C(\delta,k) \cdot \ell^{\frac13}(\log \ell)^{\frac23+2\delta} \le C(\delta,k)\frac{w^2}{\ell}
\ees
which bounds the first term in \eqref{eq:Del:bd}.
Now we proceed to the last term in \eqref{eq:Del:bd}. The $k$th
norm is bounded by
\bes\begin{split}
& \le \Big\|\max_{\vx,\vy} \big \vert \tilde{L}^R(\vx,\vy) -\E \tilde{L}^R(\vx,\vy)\big\vert \Big\|_k +\max_{\vx,\vy} \big\vert \E \tilde{L}^R(\vx,\vy) -\E L(\vx,\vy)\big\vert \\
&= \Big\|\max_{i,j} \big \vert \tilde{L}^R(\va_i,\vb_j) -\E \tilde{L}^R(\va_i,\vb_j)\big\vert \Big\|_k +\max_{\vx,\vy} \big\vert \E \tilde{L}^R(\vx,\vy) -\E L(\vx,\vy)\big\vert  \\
&\le \Big\|\max_{i,j} \big \vert \tilde{L}^R(\va_i,\vb_j) -\E \tilde{L}^R(\va_i,\vb_j)\big\vert \Big\|_k + C(\delta,k)\frac{w^2}{\ell}
\le C(\delta,k)\frac{w^2}{\ell}.
\end{split}\ees
The second last inequality follows by an application of Proposition \ref{prop:ER:lbd}. Since the number
of rectangles in the construction of Proposition \ref{prop:ER:lbd} is $O(\log (w/\ell^{1/3}))$ and the behavior of the
path in each sub-rectangle is unrestricted, we can use a bound similar to \eqref{eq:tail:ab} to conclude
the last inequality.
\end{proof}
We next give a bound on the $k$th centered moments of
\bes
L^R := \max_{\vx\in B_l,\, \vy\in B_r} L^R(\vx,\vy).
\ees
\begin{lemma}  \label{lem:xns:bd} Fix $k\ge 1$ and $0<\delta<\frac13$ and let $\ell^{\frac23}(\log \ell)^{\frac13+\delta}\le w\le \alpha_0\ell$ with $\alpha_0$ as in Lemma \ref{lem:s:t}. There exists a constant $C=C(\delta,k)$ such that
\bes
\big\| L^R -\E L^R\big\|_k \le C \frac{w^2}{\ell}.
\ees
for large enough $\ell$.
\end{lemma}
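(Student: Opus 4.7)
The plan is to reduce to the restricted LIP between a single fixed pair of boundary points and then to apply Proposition~\ref{prop:Delta:bd} together with the moment bounds already in hand. I would fix $\vx_0\in B_l$, $\vy_0\in B_r$ (say the midpoints of the two boundaries) and use the identity
\begin{equation*}
L^R-\E L^R = \bigl(L^R-L^R(\vx_0,\vy_0)\bigr) + \bigl(L^R(\vx_0,\vy_0)-\E L^R(\vx_0,\vy_0)\bigr) + \bigl(\E L^R(\vx_0,\vy_0)-\E L^R\bigr).
\end{equation*}
By the definition \eqref{eq:Delta} of $\Delta(R)$ one has $0\le L^R-L^R(\vx_0,\vy_0)\le \Delta(R)$, so the first summand has $L^k$-norm at most $\|\Delta(R)\|_k\le Cw^2/\ell$ by Proposition~\ref{prop:Delta:bd}; taking expectations in the same sandwich shows the deterministic third summand is also at most $\E\Delta(R)\le \|\Delta(R)\|_k\le Cw^2/\ell$.

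To bound the middle term I would sandwich
\begin{equation*}
\tilde L^R(\vx_0,\vy_0)\le L^R(\vx_0,\vy_0)\le L(\vx_0,\vy_0),
\end{equation*}
where $\tilde L^R(\vx_0,\vy_0)$ is the concatenated path through the nested sub-rectangles built in the proof of Proposition~\ref{prop:ER:lbd}. Translation invariance yields $L(\vx_0,\vy_0)\equald L_\ell$, so $\|L(\vx_0,\vy_0)-E_\ell\|_k\le C\ell^{1/3}$ by \eqref{eq:mom:bd}. The variable $\tilde L^R(\vx_0,\vy_0)$ is a sum of $O(\log\ell)$ independent unrestricted LIPs, dominated in length by the last piece of order $\ell$, so the tail bounds of Lemma~\ref{lem:L:tail} give $\|\tilde L^R(\vx_0,\vy_0)-\E\tilde L^R(\vx_0,\vy_0)\|_k\le C\ell^{1/3}$ up to logarithmic factors absorbed in the $\delta$-slack of the hypothesis. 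Proposition~\ref{prop:ER:lbd} further supplies the two-sided estimate $E_\ell-Cw^2/\ell\le \E\tilde L^R(\vx_0,\vy_0)\le \E L^R(\vx_0,\vy_0)\le E_\ell$, which permits swapping centerings among $\E L^R(\vx_0,\vy_0)$, $\E\tilde L^R(\vx_0,\vy_0)$, and $E_\ell$ at cost $Cw^2/\ell$. Combining these,
\begin{equation*}
\bigl|L^R(\vx_0,\vy_0)-\E L^R(\vx_0,\vy_0)\bigr|\le \bigl|L(\vx_0,\vy_0)-E_\ell\bigr| + \bigl|\tilde L^R(\vx_0,\vy_0)-\E\tilde L^R(\vx_0,\vy_0)\bigr| + C\frac{w^2}{\ell},
\end{equation*}
so that $\|L^R(\vx_0,\vy_0)-\E L^R(\vx_0,\vy_0)\|_k\le C\ell^{1/3}+Cw^2/\ell\le Cw^2/\ell$, using the hypothesis $w\ge \ell^{2/3}(\log\ell)^{1/3+\delta}$ to absorb the first term. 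Summing the three contributions via the triangle inequality yields the claimed bound.

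The main technical point is the one already encountered in Propositions~\ref{prop:ER:lbd} and~\ref{prop:Delta:bd}: the unrestricted LIPs used to build $\tilde L^R(\vx_0,\vy_0)$ must actually lie inside $R$ with overwhelming probability, and the contribution of the exceptional event to the $L^k$-norm must be negligible. This is handled by the transversal fluctuation bound in Lemma~\ref{lem:tr_fl} together with the overwhelming-probability moment estimate on $L_{\sqrt 2 \ell}$ collected in Section~\ref{sec:prelim}. All remaining steps are routine bookkeeping with the moment and tail estimates there gathered.
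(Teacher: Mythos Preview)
Your proposal is correct and follows the same three-term decomposition as the paper, with the first and third terms controlled via $\Delta(R)$ and Proposition~\ref{prop:Delta:bd} exactly as the paper does. The only difference lies in the middle term: the paper observes directly that, because $\vx_0,\vy_0$ are the midpoints, Lemma~\ref{lem:tr_fl} forces the unrestricted LIP from $\vx_0$ to $\vy_0$ to stay inside $R$ with overwhelming probability, so $L^R(\vx_0,\vy_0)=L(\vx_0,\vy_0)$ on that event and \eqref{eq:mom:bd} gives $\|L^R(\vx_0,\vy_0)-\E L^R(\vx_0,\vy_0)\|_k\le C\ell^{1/3}$ immediately. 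Your sandwich $\tilde L^R\le L^R\le L$ is valid but unnecessary here: for the midpoint pair the cascade of Proposition~\ref{prop:ER:lbd} degenerates (you start already at height $w/2$), and in any case you end up invoking Lemma~\ref{lem:tr_fl} to justify that the pieces of $\tilde L^R$ stay in $R$. So your argument is a correct but slightly roundabout version of the paper's; recognizing that the midpoint choice makes the restricted and unrestricted LIPs coincide would let you drop the lower sandwich entirely.
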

\begin{proof}
We write
\be \begin{split} \label{eq:lr-elr}
L^R -\E L^R & =\big[ L^R - L^R\big( (0,w/2), (\ell,w/2)\big) \big]  +  \big[ L^R\big( (0,w/2), (\ell,w/2)\big) - \E  L^R\big( (0,w/2), (\ell,w/2)\big) \big] \\
&\hspace{3cm}  +\big[ \E  L^R\big( (0,w/2), (\ell,w/2)\big)-\E L^R \big]
\end{split} \ee
An application of Proposition \ref{prop:Delta:bd} gives us that
\bes \begin{split}
 \big\| L^R - L^R\big( (0,w/2), (\ell,w/2)\big) \big\|_k &\le \| \Delta (R) \|_k \le C(\delta,k)\frac{w^2}{\ell} \\
 \big |\E L^R - \E L^R\big( (0,w/2), (\ell,w/2)\big) \big|  & \le \| \Delta (R) \|_k \le C(\delta,k)\frac{w^2}{\ell}
\end{split}\ees
As for the second term on the right hand side of \eqref{eq:lr-elr} we observe by Lemma \ref{lem:tr_fl}
 that with overwhelming probability
the maximal path from $(0,w/2)$ to $(\ell,w/2)$ lies within the rectangle $R$. Hence
\bes
\big\| L^R\big( (0,w/2), (\ell,w/2)\big) - \E  L^R\big( (0,w/2), (\ell,w/2)\big) \big \|_k \le C(\delta,k)\frac{w^2}{\ell}.
\ees
since the unrestricted LIP from $(0,w/2)$ to $(\ell,w/2)$ has moments of order $\ell^{1/3}$ by \eqref{eq:mom:bd}.
This completes the proof of the lemma.\end{proof}

\section{Proof of Theorem~\ref{thm:exp}} \label{sec:exp}
Fix $0<\delta<\frac16$. Let us consider the lower bound first. Choose $n$ so that $w=n^{\frac23}(\log n)^{\frac13+\delta}$.
 We divide $R$ into $m=\lfloor \ell/n \rfloor$ blocks of length $n$
and the final block of length smaller than $n$. For each of the blocks $R_i$,
let $L^R_i$ denote the length of the restricted LIP from the midpoint of the left boundary to the midpoint
of the right boundary. Note that by Lemma \ref{lem:tr_fl} the restricted LIP between the midpoints
coincides with the unrestricted LIP with overwhelming probability. Thus
\bes
\E L^R_i = \sqrt 2 n - c_1n^{1/3}\big(1+o(1)\big), \; 1\le i \le m
\ees
and similarly for the final block. One gets immediately
\bes
\E L^R \ge \sum_{i=1}^{m+1} \E L^R_i \ge \sqrt 2 \ell - C \cdot m n^{1/3} \ge \sqrt 2 \ell - C(\delta) \cdot (\log w)^{\frac13+\delta}\cdot\frac{\ell}{w}.\ees

Let us now turn to the upper bound in \eqref{eq:e:lr}. For this we choose $n$ such that $n^{\frac23}(\log n)^{\frac13+\delta} \le w \le 2n^{\frac23}(\log n)^{\frac13+\delta}$ such that $\frac{\ell}{n(\log n)^3}$ is a positive
integer. We provide some details on why we can choose such an $n$. It is easy to see that for any such $n$ we have that $n(\log n)^3 \le c(\delta) w^{\frac32}(\log w)^{\frac52-\frac{3\delta}{2}} \ll l$. For the exact divisibility first choose $n_1$ so that $w= 2n_1^{\frac23}(\log n_1)^{\frac13+\delta}$. When dividing $\ell$ by $n_1$ the remainder is at most $n_1(\log n_1)^3$. It then follows that we can choose $n_2$ appropriately so that $n_1\le n_2\le 2n_1$ and $n_2(\log n_2)^3$ exactly divides $\ell$. It is clear that $w\le 2n_2^{\frac23}(\log n_2)^{\frac13+\delta}$ and $ w\ge 2 (n_2/2)^{\frac23}\big(\log(n_2/2)\big)^{\frac13+\delta} \ge n_2^{\frac23}(\log n_2)^{\frac13+\delta}$ when $w$ is large enough.

Let us therefore now assume that we have an $n$ such that $n^{\frac23}(\log n)^{\frac13+\delta} \le w \le 2n^{\frac23}(\log n)^{\frac13+\delta}$ and $n'=n(\log n)^3$ exactly divides $\ell$. We
split $R$ into $m'=\ell/n'$ blocks $R_i$ of length
 $n'$.  For each $1\le i\le m'$ let $X_i$ denote
the length of the restricted LIP in $R_i$ and let $L^R_i$ as before denote the length of the restricted (midpoint to midpoint)
LIP. Divide $R_i$ further into three sub-rectangles $S_i^{(1)}, \, S_i^{(2)}, \, S_i^{(3)}$
where the $S$-rectangles on either side have length $n$ and $S_i^{(2)}$ has the
remaining length. Denote by $Y_i$ the length of restricted
LIP in $S_i^{(2)}$.
\begin{figure}[htbp]
   \centering
   \includegraphics[width=400pt]{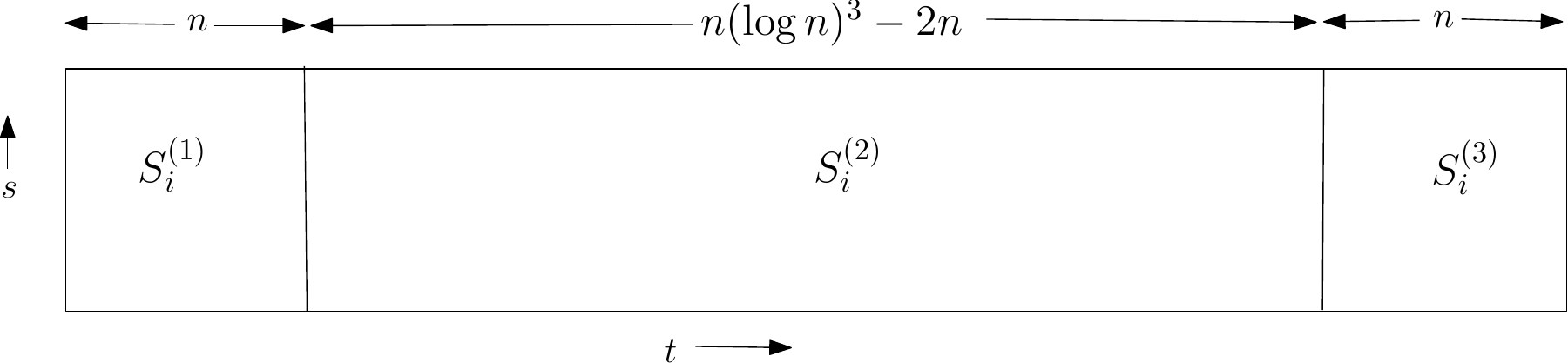}
      \caption{Division of block $R_i$}
\end{figure}
It is clear that
\bes
\begin{split}
X_i & \le \max_{\vx_1, \vy_1} L^R(\vx_1,\vy_1) + Y_i + \max_{\vx_3, \vy_3} L^R(\vx_3,\vy_3) \\
L^R_i & \ge \min_{\vx_1, \vy_1} L^R(\vx_1,\vy_1) + Y_i + \min_{\vx_3,\vy_3} L^R(\vx_3,\vy_3)
\end{split}
\ees
where $\vx_1,\vy_1$ are generic points on the left and right boundaries of $S_i^{(1)}$, and
 $\vx_3,\vy_3$ are generic points on the left and right boundaries of $S_i^{(3)}$ . Therefore
\bes
\E \vert X_i - L^R_i\vert \le 2 \|\Delta\big(S_i^{(1)}\big)\|_1 \le C(\delta)\frac{w^2}{n}
\ees
by Proposition \ref{prop:Delta:bd}. This would then give
\bes
\E L^R \le  \sum_{i=1}^{m'} \E X_i \le \sum_{i=1}^{m'} \big(\E L^R_i + C(\delta)\frac{w^2}{n}\big).
\ees
We can bound $\E L^R_i$ by the expected value of the length of the unrestricted (midpoint to midpoint) LIP to get an
upper bound of $ \sqrt 2 n'- c_1(n')^{1/3}\big(1+o(1)\big) $ and thus
\bes
\E L^R \le \sqrt 2\ell - c m' (n')^{\frac13} + C(\delta)\cdot m'\frac{w^2}{n} \le \sqrt 2 \ell -  \frac{C(\delta)}{(\log n)^{\frac53-\delta}}\cdot\frac{\ell}{w}.
\ees
This gives the required upper bound because $\log n$ is of the same order as $\log w$.
\qed

\section{Meeting of Paths} \label{sec:merge}
Given $\delta, \delta'>0$ and $n$ consider a diagonal rectangle $R$ of width $\wb=\wb(n)$ and length $\ellb=\ellb(n)$
with
\begin{equation}  \label{eq:s:bar}
  \wb = n^{\frac23}(\log n)^{\frac{1}{3} + \delta}.
\end{equation}
and
\begin{equation}\label{eq:tau:n1}
 \ellb= n\big[\sqrt{\log n}\big]+2\tau \quad \text{where}\quad   \tau:=\frac{n}{(\log n)^{\delta'}}.
\end{equation}
Rectangles of this type will form the {\it basic} blocks in Section~\ref{sec:var:lbd}.
In this section we give a lower bound on the probability that LIPs between arbitrary starting
and ending points within the blocks meet. This would help us get the necessary independence
structure needed for getting a lower bound on the variance done in Section~\ref{sec:var:lbd}.

Denote by $\va$ a generic point on the left boundary of $R$ and by
$\vb$ a generic point on the right boundary of $R$. Define the event
\begin{equation}\label{eq:Om}
  \Omega  :=\{\text{There exists $\vp\in R$ such that for all $\va,\, \vb$ there is a path in $\cL^R(\va,\vb)$ passing through $\vp$}\}.
\end{equation}
By geometric considerations one observes that
\begin{equation*}
  \Omega = \{\text{A path in $\cL^R\big((0,0),(\ellb,0)\big)$ intersects a path in
  $\cL^R\big((0,\wb),(\ellb, \wb)\big)$}\}.
\end{equation*}
In fact any point in the intersection of a path $\tilde p$ in $\cL^R\big((0,0),(\ellb,0)\big)$ and a path $\tilde q$ in $\cL^R\big((0,\wb),(\ellb, \wb)\big)$ would be such a point $\mathbf p$. This is because from any path $\tilde r$ in $\cL^R(\va,\vb)$ which intersects either of these paths, one could construct another path in $\cL^R(\va,\vb)$ passing through the point $\mathbf p$. Note first that the path $\tilde r$ would either intersect $\tilde p$ or $\tilde q$ at least two points. Without loss of generality assume that it intersects $\tilde p$ at points $p_1$ (the first intersecting point) and $p_2$ (the last intersecting point). Construct another path following $\tilde r$ till $p_1$, then following $\tilde p$ from $p_1$ to $p_2$, and finally following $\tilde r$ from $p_2$ to $\vb$. One can argue that the length of this new path between $p_1$ and $p_2$ is the same as the length of $\tilde r$ between $p_1$ and $p_2$. Having a greater length would be a contradiction to the maximality of $\tilde r$ while having a lower length would be a contradiction to the maximality of $\tilde p$.\begin{center}
\begin{figure}[htbp]
   \centering
   \includegraphics[width=400pt]{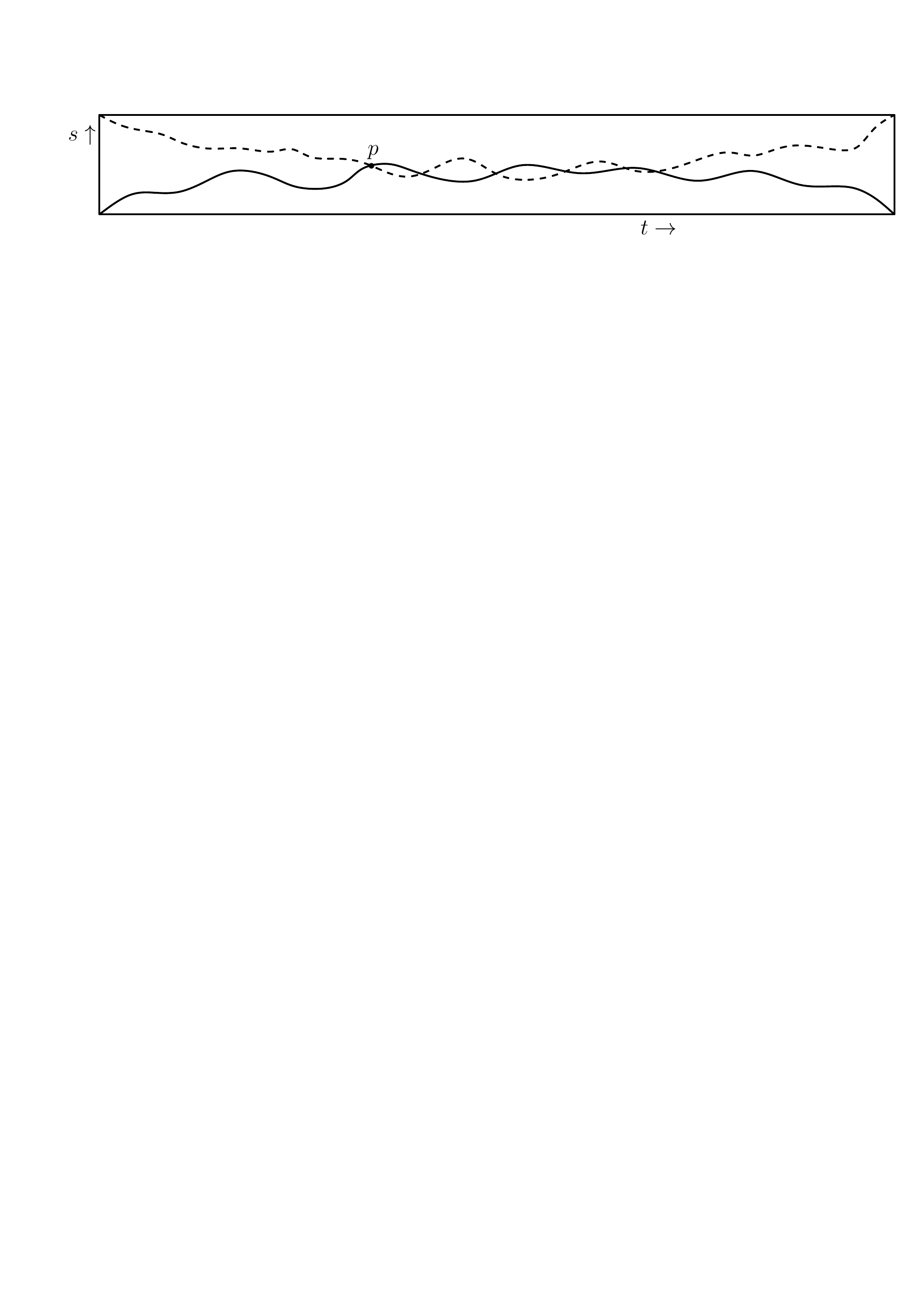}
   \caption{Picture depicting $\Omega \text{ and } \vp $}
   \label{fig:2paths}
\end{figure}
\end{center}
The following is the main result of the section.
\begin{theorem}\label{thm:Om} Fix $\delta,\delta'$ satisfying
\begin{equation}\label{eq:del:eps}
  \quad 0<\delta <\frac{1}{12}\quad  \text{and}\quad
  0<\delta'<\frac{1}{6}-2\delta.
\end{equation}
Let $R$ be a diagonal rectangle with length $\ellb=\ellb(n)$ and width $\wb=\wb(n)$ satisfying
\eqref{eq:s:bar} and \eqref{eq:tau:n1}.
Then for any $0<\epsilon<1$
we have
\begin{equation} \label{eq:prob:Om}
  \pr(\Omega)\ge n^{-\epsilon}
\end{equation}
for all $n$ large enough.
\end{theorem}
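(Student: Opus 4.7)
My plan is to recast $\Omega$ as an equality between sums of four restricted LIP lengths in $R$ and then apply a Gaussian-type anti-concentration estimate. Set
\begin{align*}
L_B &= L^R((0,0),(\ellb,0)), & L_T &= L^R((0,\wb),(\ellb,\wb)), \\
L_D &= L^R((0,0),(\ellb,\wb)), & L_A &= L^R((0,\wb),(\ellb,0)).
\end{align*}
I would first show $L_B + L_T \ge L_D + L_A$ always, and $\Omega = \{L_B + L_T = L_D + L_A\}$. The inequality follows from a swap argument: any pair of realizers of $L_D, L_A$ must cross at some point $\vq \in R$ by planar topology, and swapping tails at $\vq$ produces increasing paths between the horizontal corner pairs with length-sum $L_D + L_A$, bounded above by $L_B + L_T$. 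For the equality: $\Omega$ implies that optimizers of $L_B, L_T$ share a common point $\vp$, where the symmetric swap gives $L_D + L_A \ge L_B + L_T$; conversely, at equality the swapped paths above must themselves be optimizers of $L_B, L_T$ passing through $\vq$, witnessing $\Omega$.

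For the moments of $D := L_B + L_T - L_D - L_A \ge 0$, I would argue $\E D \lesssim \wb^2/\ellb$ and $\sqrt{\var D} \lesssim \ellb^{1/3}$. The mean bound is motivated by Lemma~\ref{lem:s:t}, which gives $E_\ellb - E_{\ellb,\wb} = \wb^2/(\sqrt 2 \, \ellb) + \text{lower order}$; this is the leading contribution to $\E D$ since the unrestricted LIPs between the four corners approximate the restricted ones on the (overwhelming) event that they do not leave $R$, and it can be made rigorous via a block decomposition in which Proposition~\ref{prop:ER:lbd} applies block-wise at scale $n$ (its width hypothesis holds for each length-$n$ sub-block but not for the whole of $R$). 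The variance bound follows from Lemma~\ref{lem:E:Var} applied to each of the four restricted LIPs together with the triangle inequality. Together these give the crucial ratio
\[
\E D/\sqrt{\var D} \lesssim (\log n)^{2\delta}.
\]

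Anti-concentration of $D$ on scale $\sqrt{\var D}$ would then yield
\[
\pr(\Omega) = \pr(D \le 0) \gtrsim \exp\bigl(-C(\log n)^{4\delta}\bigr) \gg n^{-\eps}
\]
for any fixed $\eps>0$, since $4\delta<1/3$ when $\delta<1/12$. To establish this, I would decompose $D=\sum_{j=1}^{M}D_j+\text{boundary terms}$ where $M=[\sqrt{\log n}]$ and $D_j$ is the contribution from the $j$-th sub-block, and apply a Lindeberg--Feller or Berry--Esseen type CLT. The buffers of length $\tau = n/(\log n)^{\delta'}$ at each end of $R$ serve to decouple the random entry/exit $s$-coordinates at block boundaries.

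The main obstacle is justifying the Gaussian behavior quantitatively. The block contributions $D_j$ are only approximately independent, being coupled through the random $s$-coordinates at block boundaries; the calibration $\delta' < \tfrac{1}{6} - 2\delta$ of the buffer length should make the resulting boundary corrections strictly smaller than $\sqrt{\var D}$, and they would be controlled using the tail bounds of Lemmas~\ref{lem:L:tail} and~\ref{lem:tr_fl}. Moreover, a matching lower bound on $\var D$ is needed to prevent $D$ from being overconcentrated around its positive mean; this is plausible because independent Poisson randomness in the bulk of $R$ affects the slanted paths $L_D, L_A$ differently from the horizontal paths $L_B, L_T$, but establishing it rigorously from the block decomposition is where most of the work lies.
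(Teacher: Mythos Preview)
Your combinatorial reformulation $\Omega = \{L_B + L_T = L_D + L_A\}$ is correct and elegant, but the probabilistic half of the argument has real gaps. First, the block decomposition $D = \sum_j D_j$ is not well-defined: the four corner-to-corner optimizers enter and exit each sub-block at random $s$-coordinates, so there is no canonical ``contribution of block $j$ to $D$'' that is simultaneously measurable with respect to the Poisson points in block $j$ and sums to $D$ up to controllable boundary terms. Second, even granting such a decomposition into $M = [\sqrt{\log n}]$ approximately independent summands, a Berry--Esseen bound would only give $|\pr(D\le 0) - \Phi(-\E D/\sqrt{\var D})| \le C M^{-1/2} = C(\log n)^{-1/4}$; since $\Phi\bigl(-(\log n)^{2\delta}\bigr) \le \exp\bigl(-c(\log n)^{4\delta}\bigr) \ll (\log n)^{-1/4}$, the error term swallows the signal. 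You would need a genuine moderate-deviation \emph{lower} bound for the left tail of $D$, not a CLT, and nothing in your outline supplies one. Third, as you concede, the lower bound on $\var D$ is itself hard---the four terms are highly correlated and could in principle nearly cancel---and it is not derivable from the tools at hand.

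The paper takes a different, constructive route that avoids all three issues. It works directly with the event $A_1$ that $L_B$ exceeds $E_{\ellb}$ by order $\eps^{2/3} n^{1/3}(\log n)^{5/6}$, and shows $\pr(A_1) \ge n^{-C\eps}$ by forcing each of the $[\sqrt{\log n}]$ length-$n$ sub-block LIPs to be long via the upper-tail \emph{lower} bound in Lemma~\ref{lem:L:tail}. The key step you are missing is the BK inequality (Lemma~\ref{lem:bk}): it shows that, conditionally on $A_1$, no top path of comparable length can exist \emph{disjointly} from the bottom optimizer. The buffer events $A_3, A_{4,s'}$ on the strips of length $\tau$ then certify that a long top path can nevertheless be built by detouring through the bottom optimizer over the middle stretch $[\tau,\ellb-\tau]$, forcing an intersection. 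The BK step thus replaces any need for a variance lower bound or anti-concentration estimate for $D$.
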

Fix $0<\epsilon<1$ and $\delta, \delta'$ satisfying \eqref{eq:del:eps}. Define the following events.
\begin{equation} \label{eq:a12}
\begin{split}
  A_1&:=\left\{L^R((0,0),(\ellb,0))\ge E_{\ellb} + 10 \eps^{\frac23}n^{\frac13}( \log
  n)^{\frac56}\right\},\\
  A_1'&:=\left\{L^R((0,\wb),(\ellb,\wb))\ge E_{\ellb} + \eps^{\frac23}n^{\frac13}( \log
  n)^{\frac56}\right\},\\
  A_2&:=\left\{A_1'\text{ does not occur disjointly from $A_1$}\right\}= \left(A_1\Box A_1'\right)^c.
\end{split}
\end{equation}
Our approach to lower bound $\pr(\Omega)$ proceeds
by showing that $A_1$ has probability $n^{-c\eps}$ and $A_2$ is
likely given $A_1$. Under the event $A_1$, the LIP between the
point $(0,0)$ and $(\ellb,0)$ is very long and the only potential way for
$(0,\wb)$ to be connected to $(\ellb,\wb)$ via a path with
similarly large length is by intersecting the first path. To make
this more precise, we need also control the loss incurred in
connecting a corner point of $R$ to a point near the central line of
$R$ (with $s$ coordinate roughly $\wb/2$).
To this end, define the events
\begin{equation} \label{eq:a34}
  \begin{split}
    A_3 & :=\left\{\forall\, 0\le s\le \wb,\;\,L^R\big((0,0),(\tau, s)\big)\le E_\tau + n^{\frac13}(\log
    n)^{\frac23}\right\},\\
    A_{4,s'}& :=\left\{L^R\big((0,0),(\tau, s')\big)\ge E_\tau -2\mathcal{C} n^{\frac13}(
    \log n)^{\frac23+2\delta +\delta'}\right\},\qquad 0\le s'\le \wb
  \end{split}
\end{equation}
and the symmetric events
\be \label{eq:a'34} \begin{split}
 A_3'  &:=\left\{\forall\, 0\le s\le \wb,\;\,L^R\big((\ellb-\tau,s),(\ellb,0)\big)\le E_\tau + n^{\frac13}(\log
    n)^{\frac23}\right\} \\
A_{4,s'}'  &:=\left\{L^R\big((\ellb-\tau,s'),(\ellb, 0)\big)\ge E_\tau -2\mathcal{C} n^{\frac13}(
    \log n)^{\frac23 + 2\delta +\delta'}\right\},\qquad 0\le s'\le \wb.
 \end{split}\ee
where $\mathcal{C}=\mathcal{C}(\delta)$ is the constant appearing in Proposition \ref{prop:ER:lbd}. The following lemmas gather bounds which will be combined to prove Theorem \ref{thm:Om}.

\begin{lemma}
 There exists a constant $c>0$ independent of $\epsilon$ and $n_0= n_0(\delta, \delta', \epsilon)$ such that for
  all $n\ge n_0$ we have
  \begin{equation}\label{eq:a2}
    \pr(A_2^c | A_1) \le n^{-c\eps}.
   \end{equation}
\end{lemma}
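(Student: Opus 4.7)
The plan is to combine the BK inequality (Lemma \ref{lem:bk}) with the upper tail bound of Lemma \ref{lem:L:tail}. First observe that both $A_1$ and $A_1'$ are increasing events with respect to the Poisson configuration: adding Poisson points can only increase the length of the longest increasing path, whether restricted to $R$ or not. Since $A_2^c = A_1\Box A_1'$ by definition, the BK inequality gives
\[
\pr(A_2^c)=\pr(A_1\Box A_1')\le \pr(A_1)\cdot\pr(A_1'),
\]
so dividing by $\pr(A_1)$ yields
\[
\pr(A_2^c\mid A_1)\le \pr(A_1').
\]
Thus the problem reduces to an unconditional upper bound $\pr(A_1')\le n^{-c\eps}$.

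To bound $\pr(A_1')$, note that any restricted path from $(0,\wb)$ to $(\ellb,\wb)$ is also an unrestricted path, so $L^R((0,\wb),(\ellb,\wb))\le L((0,\wb),(\ellb,\wb))$, and by the translation invariance of the Poisson process together with the displacement $(\ellb,0)$ between the two endpoints,
\[
L((0,\wb),(\ellb,\wb))\stackrel{d}{=}L_{\ellb}.
\]
Consequently,
\[
\pr(A_1')\le \pr\bigl(L_{\ellb}-E_{\ellb}\ge \eps^{2/3}n^{1/3}(\log n)^{5/6}\bigr).
\]

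Now I apply \eqref{eq:up:tail}. By \eqref{eq:sig:asy} and \eqref{eq:tau:n1}, $\sigma_{\ellb}=c_0\ellb^{1/3}(1+o(1))\asymp n^{1/3}(\log n)^{1/6}$. Setting
\[
T:=\frac{\eps^{2/3}n^{1/3}(\log n)^{5/6}}{\sigma_{\ellb}}\asymp \eps^{2/3}(\log n)^{2/3},
\]
we have $T^{3/2}\asymp \eps\log n$, while the constraint $1\le T\le \ellb^{2/3}\asymp n^{2/3}(\log n)^{1/3}$ is comfortably satisfied for $n$ large. The upper tail bound therefore yields
\[
\pr(A_1')\le C\exp(-cT^{3/2})\le Cn^{-c\eps},
\]
for a constant $c>0$ independent of $\eps$, which is the desired estimate after absorbing the constant $C$ into $c\eps$ for $n\ge n_0(\delta,\delta',\eps)$.

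The argument is largely mechanical once the right reductions are made; the only mild subtlety is checking that the BK inequality of Lemma \ref{lem:bk} applies, which reduces to the monotonicity of $L^R$ in the Poisson configuration, and that the tail bound regime $T\le \ellb^{2/3}$ is respected by the chosen deviation scale $\eps^{2/3}n^{1/3}(\log n)^{5/6}$, which is easy to verify from the size of $\wb$ and $\ellb$.
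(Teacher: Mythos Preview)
Your proof is correct and follows essentially the same approach as the paper: apply the BK inequality to bound $\pr(A_2^c\mid A_1)\le \pr(A_1')$, then compare $L^R$ with the unrestricted $L_{\ellb}$ and invoke the upper tail bound \eqref{eq:up:tail} together with $\sigma_{\ellb}\asymp n^{1/3}(\log n)^{1/6}$. The one implicit step you use when ``dividing by $\pr(A_1)$'' is that $A_2^c=A_1\Box A_1'\subseteq A_1$ (disjoint occurrence of increasing events implies occurrence of each), so $\pr(A_2^c\mid A_1)=\pr(A_2^c)/\pr(A_1)$; this is immediate but worth stating explicitly.
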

\begin{proof}
 The estimate for $\pr(A_2^c | A_1)$ follows by an application of Lemma \ref{lem:bk} and comparison with LIP which are not constrained to lie in $R$.
  \begin{equation*}
    \pr(A_2^c | A_1)\le \pr(A_1')\le \pr\big(L((0,\wb),(\ellb,\wb))\ge E_{\ellb} + \eps^{\frac23}n^{\frac13}( \log
  n)^{\frac56}\big)\le Cn^{-c\eps},
  \end{equation*}
the last inequality following from \eqref{eq:sig:asy} and \eqref{eq:up:tail}.
\end{proof}

\begin{lemma}
 There exist constants $c, C>0$ and $n_0= n_0(\delta, \delta')$ such that for
  all $n\ge n_0$ we have
  \begin{equation}
 \pr(A_3^c)\le C\exp\big(-c(\log n)^{1 + \frac{\delta'}{2}}\big), \label{eq:a3}
   \end{equation}
A similar bound as \eqref{eq:a3} holds for $\pr\big((A'_3)^c\big)$.
\end{lemma}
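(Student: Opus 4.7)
The plan is a discretize-and-union-bound argument resting on the stretched-exponential tail estimates of Lemma~\ref{lem:L:tail} together with Lemma~\ref{lem:s:t}. First, I would condition on the grid event $A_0$ of Lemma~\ref{lem:grid}, whose complement has probability much smaller than the target bound. On $A_0$, every unit box contains at most $(\log n)^2$ Poisson points; a standard comparison then shows that if $|s-s'|\le 1$ one has $|L^R((0,0),(\tau,s))-L^R((0,0),(\tau,s'))|\le C(\log n)^2$, which is negligible compared with the threshold $n^{1/3}(\log n)^{2/3}$. It thus suffices to bound the maximum over the at most $\lfloor\wb\rfloor+1 = O(n)$ integer points $s_i\in[0,\wb]$.

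For each fixed $s_i$, I would drop the domain constraint via $L^R((0,0),(\tau,s_i))\le L_{\tau,s_i}$ in distribution, and then apply the upper tail bound. The parameters satisfy $\wb/\tau = (\log n)^{1/3+\delta+\delta'}/n^{1/3}\to 0$, which comfortably puts us in the near-diagonal regime where Lemma~\ref{lem:s:t} applies; uniformly over $s_i\in[0,\wb]$ this gives $\sigma_{\tau,s_i}\asymp\tau^{1/3}\asymp n^{1/3}(\log n)^{-\delta'/3}$ and $E_{\tau,s_i}\le E_\tau$. Writing the threshold as $n^{1/3}(\log n)^{2/3} = T\,\sigma_{\tau,s_i}$ yields $T\asymp(\log n)^{2/3+\delta'/3}$, hence $T^{3/2}\asymp(\log n)^{1+\delta'/2}$. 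Invoking part~(3) of Lemma~\ref{lem:s:t} (which transports \eqref{eq:up:tail} to the $(t,s)$ setting), together with the monotonicity $E_\tau\ge E_{\tau,s_i}$ (which only shifts the event in the favorable direction), produces a per-point probability bound of $C\exp\bigl(-c(\log n)^{1+\delta'/2}\bigr)$.

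A union bound over the $O(n)$ grid points, combined with the negligible $\pr(A_0^c)$ from Lemma~\ref{lem:grid}, then gives the claim: the polynomial prefactor is absorbed by the stretched exponential because $(\log n)^{1+\delta'/2}\gg\log n$ whenever $\delta'>0$. The symmetric estimate for $A_3'$ follows from the reflection $(t,s)\mapsto(\ellb-t,\wb-s)$, under which the Poisson process is invariant in law and which sends the two events into each other. The only mildly delicate point is checking that the tail-bound constants in Lemma~\ref{lem:L:tail} survive the passage to a moving endpoint $(\tau,s_i)$ uniformly in $s_i\in[0,\wb]$; this is precisely the content of part~(3) of Lemma~\ref{lem:s:t}. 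The remaining work is bookkeeping of logarithmic factors, which the hypothesis $\delta'>0$ handles with ample margin.
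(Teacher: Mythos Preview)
Your proposal is correct and follows essentially the same approach as the paper: discretize the endpoint $s\in[0,\wb]$, drop the restriction to $R$ via $L^R\le L$, apply the upper-tail bound from Lemma~\ref{lem:s:t} (noting $\sigma_{\tau,s}\asymp\tau^{1/3}=n^{1/3}(\log n)^{-\delta'/3}$ so the deviation corresponds to $T^{3/2}\asymp(\log n)^{1+\delta'/2}$), and take a union bound. The only cosmetic difference is that the paper discretizes at mesh $\tau^{-1}$ (yielding about $\tau\wb$ grid points) whereas you use mesh~$1$; both polynomial prefactors are swallowed by the stretched exponential.
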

\begin{proof}
As in Proposition \ref{prop:Delta:bd} we might as well discretize the possible
  values of $s$ to only about $\tau\wb$ possible values at distance $\tau^{-1}$ apart;
 see the argument leading up to \eqref{eq:tail:ab}. For each value of $s$, we compare with the LIP which are not constrained to lie in $R$ to obtain
   \begin{equation*}\begin{split}
    \pr\big[L^R\big((0,0),(\tau, s)\big)> E_\tau + n^{\frac13}(\log
    n)^{\frac23}\big]
    &  \le \pr\big[L\big((0,0),(\tau, s)\big)> E_\tau + n^{\frac13}(\log
    n)^{\frac23}\big] \\
& = \pr\big[L\big((0,0),(\tau, s)\big)> E_\tau + \tau^{\frac13}(\log n)^{\frac{\delta'}{3}}(\log
    n)^{\frac23}\big] \\
& \le C\exp\big(-c(\log n)^{1 + \frac{\delta'}{2}}\big),
  \end{split}\end{equation*}
the last inequality following from Lemma~\ref{lem:s:t}. The lemma follows from a union bound.
\end{proof}

\begin{lemma} For any $D>0$ and
  $0\le s'\le \wb$ we have
\begin{equation}
\pr(A_{4,s'}^c)= o(n^{-D}) . \label{eq:a4}
\end{equation}
A similar bound as \eqref{eq:a4} holds for $\pr\big((A'_{4,s'})^c\big)$.
\end{lemma}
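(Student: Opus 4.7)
The plan is to lower bound $L^R\big((0,0),(\tau,s')\big)$ by the length of an explicit increasing path inside $R$ from $(0,0)$ to $(\tau,s')$, and then establish separately an expectation bound (via Proposition~\ref{prop:ER:lbd}) and a concentration bound (via Lemma~\ref{lem:L:tail}). First I would invoke Proposition~\ref{prop:ER:lbd} on the sub-rectangle $R_0\subset R$ of length $\tau$ and width $\wb$ sharing the left edge of $R$. Since $\wb=n^{2/3}(\log n)^{1/3+\delta}$ and $\tau=n/(\log n)^{\delta'}$ satisfy $\tau^{2/3}(\log\tau)^{1/3+\delta}\le\wb\le\alpha_0\tau$ for $n$ large, the hypotheses of Proposition~\ref{prop:ER:lbd} hold and give
\[
\E L^R\big((0,0),(\tau,s')\big)\ge E_\tau-\mathcal{C}\,\frac{\wb^{2}}{\tau}=E_\tau-\mathcal{C}\,n^{1/3}(\log n)^{2/3+2\delta+\delta'},
\]
which is exactly the inequality defining $A_{4,s'}$ but with a prefactor $\mathcal{C}$ rather than $2\mathcal{C}$.

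Next I would replay the dyadic construction inside the proof of Proposition~\ref{prop:ER:lbd}, but with the chain of corners $(t_k,s_k)_{k=0}^{k_0}$ terminating at $(\tau,s')$, using the end-of-proof extension there to accommodate an arbitrary $s'\in[0,\wb]$. The length of the resulting restricted path is $M=\sum_{k=0}^{k_0}M_k$, a sum of $k_0=O(\log n)$ independent LIP lengths in disjoint sub-rectangles, and by Lemma~\ref{lem:tr_fl} each $M_k$ coincides with its unrestricted counterpart with overwhelming probability. Lemma~\ref{lem:L:tail} and Lemma~\ref{lem:s:t}(3) then give
\[
\pr\bigl(|M_k-\E M_k|\ge T\sigma_k\bigr)\le C\exp(-cT^{3/2}),\qquad\sigma_k\le C(t_{k+1}-t_k)^{1/3}.
\]
Using $t_{k+1}-t_k=2^{3k/2}\tau^{3/2}/(2\wb^{3/2})$ and $s_{k_0}\asymp\wb$, the geometric sum gives $\sum_k\sigma_k\le C\tau^{1/3}=Cn^{1/3}(\log n)^{-\delta'/3}$. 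Choosing $T=(\log n)^{2/3+2\delta+\delta'}$ bounds the total allowed deviation by $Cn^{1/3}(\log n)^{2/3+2\delta+2\delta'/3}$, which for $n$ large is strictly smaller than $\mathcal{C}\,n^{1/3}(\log n)^{2/3+2\delta+\delta'}$. A union bound over the $k_0$ sub-rectangles then yields $|M-\E M|\le\mathcal{C}\,n^{1/3}(\log n)^{2/3+2\delta+\delta'}$ outside an event of probability at most $C(\log n)\exp\!\bigl(-c(\log n)^{1+3\delta+3\delta'/2}\bigr)$, which is $o(n^{-D})$ for every $D>0$.

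Combining this concentration statement with the expectation bound from the first step, and using $L^R\big((0,0),(\tau,s')\big)\ge M$, yields the defining inequality of $A_{4,s'}$ outside a set of probability $o(n^{-D})$; the symmetric bound for $A'_{4,s'}$ follows by reflecting the construction about $t=\ellb/2$. The main obstacle is the logarithmic bookkeeping: one must verify that the concentration slack fits precisely inside the jump from $\mathcal{C}$ in the expectation bound to $2\mathcal{C}$ in the definition of $A_{4,s'}$. The geometric-sum estimate $\sum_k\sigma_k\le C\tau^{1/3}$ is what makes this possible, since the cruder bound $k_0$ times the largest $\sigma_k$ would inflate the deviation by an extra factor of $\log n$ and could break the target.
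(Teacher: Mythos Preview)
Your proposal is correct and follows essentially the same argument as the paper: replay the dyadic construction from the proof of Proposition~\ref{prop:ER:lbd} inside the strip of length $\tau$, use the expectation estimate established there, and then control the deviation of each of the $O(\log n)$ independent pieces via Lemma~\ref{lem:L:tail}, exploiting the geometric-sum bound $\sum_k\sigma_k\le C\tau^{1/3}$.

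Two small remarks. First, the ``expectation bound from the first step'' must be for $\E M$ (the constructed path), not merely for $\E L^R\big((0,0),(\tau,s')\big)$; the statement of Proposition~\ref{prop:ER:lbd} gives only the latter, while its \emph{proof} gives the former, which is what you actually combine with the concentration of $M$. Since you explicitly replay that proof in Step~2, the logic is fine, but the sentence linking the two steps should point to $\E M$. Second, the paper makes the slightly simpler choice $T=(B\log n)^{2/3}$, which already yields probability $n^{-cB}$ per piece and the same deviation bound $O\bigl(n^{1/3}(\log n)^{2/3-\delta'/3}\bigr)$; your larger $T$ gives a super-polynomial probability bound and a larger (but still admissible) deviation, so both choices work.
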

\begin{proof} The argument is based on the proof of Proposition \ref{prop:ER:lbd}
 to which we shall refer. Fix $D>0$. It is enough to prove
\bes \pr \left\{L^R\big(\va,(\tau/2, \wb/2)\big)\le E_{\frac{\tau}{2}} -\mathcal{C} n^{\frac13}(
    \log n)^{\frac23+2\delta +\delta'}\right\} \le \frac{1}{n^{D}}
 \ees
where $\va$ is a point on the left boundary. This is because we have a similar bound for
$L^R\big(\big(\frac{\tau}{2}, \frac{\wb}{2}\big), \vb\big)$ where $\vb=(\tau,s')$. Let us assume without
loss of generality that $\va=0$ as in the proof of Proposition \ref{prop:ER:lbd}.
  We have shown there that
\bes
\sum_{k=0}^{k_0-1}E\big((t_k,s_k), (t_{k+1},s_{k+1})\big) + E\big((t_{k_0},s_{k_0}), (\tau/2, \wb/2)\big) \ge E_{\frac{\tau}{2}} -\frac{\mathcal{C}}{2}n^{\frac13} (\log n)^{\frac{2}{3} +2\delta + \delta'}.
\ees
By superadditivity it follows that
\bes \begin{split}
L^R\big((0,0),(\tau/2, \wb/2)\big) &\ge \sum_{k=0}^{k_0-1}L^R\big((t_k,s_k), (t_{k+1},s_{k+1})\big) + L^R\big((t_{k_0},s_{k_0}), (\tau/2, \wb/2)\big) \\
& =   \sum_{k=0}^{k_0-1}L\big((t_k,s_k), (t_{k+1},s_{k+1})\big) + L\big((t_{k_0},s_{k_0}), (\tau/2, \wb/2)\big),
\end{split}
\ees
the equality holding outside a set of probability less than $C\exp\big(-c(\log n)^{1+3\delta}\big)$ by
Lemma \ref{lem:tr_fl}, and hence negligible compared to the bound we are trying to prove. Thus it is enough to show that
\begin{align}\begin{split} \label{eq:l-e}
 & \sum_{k=0}^{k_0-1}\Big\vert L\big((t_k,s_k), (t_{k+1},s_{k+1})\big) -E\big((t_k,s_k), (t_{k+1},s_{k+1})\big)\Big\vert \\
 &\hspace{2cm}+ \Big\vert L\big((t_{k_0},s_{k_0}), (\tau/2, \wb/2)\big) - E\big((t_{k_0},s_{k_0}), (\tau/2, \wb/2)\big)\Big \vert\le \frac{\mathcal{C}}{2}n^{\frac13} (\log n)^{\frac23 +2\delta + \delta'}
\end{split}
\end{align}
with sufficiently large probability. This follows from the tail bounds in Lemma \ref{lem:s:t}.
Indeed for sufficiently large $B$ one gets
\bes
\pr\Big(\Big\vert L\big((t_k,s_k), (t_{k+1},s_{k+1})\big) -E\big((t_k,s_k), (t_{k+1},s_{k+1})\big)\Big\vert \ge (t_{k+1}-t_k)^{\frac13} (B\log n)^{\frac23}\Big) \le \frac{1}{n^{ D}},
\ees
the above holding for each $k$. A similar bound holds for the last difference in \eqref{eq:l-e}. Since $k_0= O(\log n)$,
\bes \begin{split}
&\sum_{k=0}^{k_0-1}\Big\vert L\big((t_k,s_k), (t_{k+1},s_{k+1})\big) -E\big((t_k,s_k), (t_{k+1},s_{k+1})\big)\Big\vert \\
&\hspace{2cm}+ \Big\vert L\big((t_{k_0},s_{k_0}), (\tau/2, \wb/2)\big) - E\big((t_{k_0},s_{k_0}), (\tau/2, \wb/2)\big)\Big \vert \\
 &= O\Big(\sum_{k=0}^{k_0-1} (t_{k+1}-t_k)^{1/3} (B\log n)^{\frac23} + \big(\frac{\tau}{2}-t_{k_0}\big)^{\frac13} (B\log n)^{\frac23} \Big) \\
 &= O\Big(n^{\frac13} (\log n)^{\frac23 -\frac{\delta'}{3}} \Big)
\end{split}
\ees
outside of a set of probability $\frac{1}{n^{D}}$. This gives the bound \eqref{eq:a4} as required. \end{proof}

\begin{lemma} There exist constants $C>0$ independent of $\epsilon$ such that the following holds.  There exists $n_0= n_0(\delta, \delta', \epsilon)$ such that for
  all $n\ge n_0$ we have
\begin{equation}
\pr(A_1)\ge n^{-C\eps}. \label{eq:a1}
\end{equation}
\end{lemma}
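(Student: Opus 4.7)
The plan is to adapt the construction used to establish the lower bound on the upper tail in Lemma~\ref{lem:L:tail} -- namely, concatenating many independent ``slightly long'' segment-LIPs -- and to additionally arrange that each segment-LIP has small enough transversal fluctuations to stay inside~$R$. Since $\sigma_{\ellb}\asymp \ellb^{1/3}\asymp n^{1/3}(\log n)^{1/6}$, the threshold $10\eps^{2/3}n^{1/3}(\log n)^{5/6}$ in the definition of $A_1$ is of the form $T\sigma_{\ellb}$ with $T$ of order $\eps^{2/3}(\log n)^{2/3}$.

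Fix such a $T$ and set $N=\lceil T^{3/2}\rceil\asymp \eps\log n$ and $r=\ellb/N\asymp n/(\eps\sqrt{\log n})$. Partition the central segment $\{(t,0):0\le t\le \ellb\}$ of $R$ into $N$ consecutive sub-segments of length $r$, and for each $i=1,\dots,N$ introduce the events
\begin{align*}
C_i&:=\bigl\{L\bigl(((i-1)r,0),(ir,0)\bigr)\ge E_r+k\sigma_r\bigr\},\\
D_i&:=\bigl\{\text{every LIP between these two endpoints has transversal displacement}\le \wb/2\bigr\},
\end{align*}
where $k$ is the absolute constant chosen as in the proof of the lower bound of \eqref{eq:up:tail} in Lemma~\ref{lem:L:tail}, so that $\pr(C_i)\ge e^{-\lambda}$ for some absolute $\lambda>0$ and so that on $\bigcap_iC_i$ the concatenation satisfies $\sum_{i=1}^N(E_r+k\sigma_r)\ge E_{\ellb}+T\sigma_{\ellb}$. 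The events $\{C_i\cap D_i\}_{i=1}^N$ are mutually independent because each is measurable with respect to the Poisson process in a disjoint rectangular region of length $r$ and width $\wb$.

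On the event $\bigcap_i(C_i\cap D_i)$, each unrestricted segment-wise maximizer lies inside $R$, so their concatenation is an increasing path in $R$ from $(0,0)$ to $(\ellb,0)$ of length at least $E_{\ellb}+T\sigma_{\ellb}\ge E_{\ellb}+10\eps^{2/3}n^{1/3}(\log n)^{5/6}$, which is exactly the event $A_1$. Therefore
\begin{equation*}
\pr(A_1)\ge \prod_{i=1}^{N}\pr(C_i\cap D_i)\ge \bigl(e^{-\lambda}-\pr(D_1^c)\bigr)^{N}.
\end{equation*}

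The main technical step -- and the one where the particular scaling in~\eqref{eq:s:bar} becomes essential -- is checking that $\pr(D_1^c)\to 0$. By Lemma~\ref{lem:tr_fl}, applied after the trivial rescaling sending the sub-segment to the canonical square, every maximizer over a sub-segment of length $r$ has transversal displacement at most $r^{2/3}(\log r)^{1/3+\delta}$ outside an event of overwhelming probability. With the chosen $r$, this bound is of order $\eps^{-2/3}n^{2/3}(\log n)^\delta$, which is dominated by $\wb/2=\tfrac12 n^{2/3}(\log n)^{1/3+\delta}$ by the factor $\eps^{2/3}(\log n)^{1/3}\to\infty$. Hence $\pr(D_1^c)=o(1)$, and the display above gives $\pr(A_1)\ge \tfrac12 e^{-\lambda N}\ge n^{-C\eps}$ for an absolute constant $C$, which is~\eqref{eq:a1}.
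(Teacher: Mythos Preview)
Your overall strategy --- split the segment into many pieces, ask each piece to be slightly long with constant probability, and use independence --- is exactly the idea behind the paper's proof. However, there is a genuine gap in how you handle the constraint that the concatenated path must lie in $R$.

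The rectangle $R$ is $\{(t,s):0\le t\le\ellb,\ 0\le s\le\wb\}$, so the points $(0,0)$ and $(\ellb,0)$ are the \emph{bottom} corners of $R$, and your ``central segment'' $\{(t,0):0\le t\le\ellb\}$ is the bottom boundary. Your event $D_i$ asserts that the unrestricted LIP from $((i-1)r,0)$ to $(ir,0)$ has transversal displacement at most $\wb/2$, i.e.\ $|s|\le\wb/2$ along the path. But this only gives $-\wb/2\le s\le\wb/2$, which does not prevent the path from dipping below $s=0$ and leaving $R$. Lemma~\ref{lem:tr_fl} is a two-sided bound and gives no control on the sign of~$s$; there is no symmetry or reflection argument that yields a uniform lower bound on the probability that all maximizers from a boundary point stay on one side. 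Consequently your claim ``each unrestricted segment-wise maximizer lies inside $R$'' is unjustified, and the inequality $\pr(A_1)\ge\prod_i\pr(C_i\cap D_i)$ does not follow.

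The paper repairs exactly this issue by first spending the two buffer intervals of length~$\tau$ to move from the corners $(0,0)$ and $(\ellb,0)$ up to the central line $s=\wb/2$, via the events $A_{4,\wb/2}$ and $A_{4,\wb/2}'$ (whose cost is controlled by Proposition~\ref{prop:ER:lbd}). Only then does it perform the segment decomposition, along $s=\wb/2$, where a transversal fluctuation of size at most $\wb/2$ genuinely keeps each piece inside $R$. Incidentally, the paper uses $[\sqrt{\log n}]$ segments of length~$n$ (each required to exceed its mean by $\asymp\eps^{2/3}(\log n)^{1/3}$ standard deviations), whereas you use $\asymp\eps\log n$ segments each required to exceed its mean by a constant number of standard deviations; both bookkeeping schemes lead to $n^{-C\eps}$, but neither dispenses with the need to first route the path onto the central line.
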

\begin{proof} By superadditivity it follows that
  \begin{equation*}
    L^R\big((0,0),(\ellb,0)\big) \ge L^R\big((0,0),(\tau, \wb/2)\big)
    + L^R\big((\tau, w/2),
    (\ellb-\tau, \wb/2)\big) + L^R\big((\ellb-\tau,
    \wb/2), (\ell,0)\big).
  \end{equation*}
Call $ A_4=A_{4,\frac{\wb}{2}}$ and $ A_4'=A_{4,\frac{\wb}{2}}'$.  Note that on the event $A_{4}\cap A_{4}' $,
  \begin{equation*}
    L^R\big((0,0),(\ellb,0)\big) \ge 2E_\tau - 4\mathcal{C} n^{\frac13}(
    \log n)^{\frac23+2\delta+\delta'} + L^R\big((\tau, \wb/2),
    (\ellb-\tau, \wb/2)\big).
  \end{equation*}
  It is then easy to deduce that
  \begin{equation*}\begin{split}
    &\pr(A_1)  \ge \pr\Big[L^R\big((\tau, \wb/2),
    (\ellb-\tau,\wb/2)\big)\ge E_{\ellb} + 10\eps^{\frac23}n^{\frac13}( \log
  n)^{\frac56} - 2E_\tau + 4\mathcal{C}n^{\frac13}(
    \log n)^{\frac23+2\delta +\delta'}\Big]  \\
    &\hspace{4cm}- 2\pr\big(A_{4}^c\big).
  \end{split}\end{equation*}
  The term $\pr(A^c_{4})$
  decays very fast in $n$ as we showed in \eqref{eq:a4} and we therefore focus on the first term
  on the right hand side. We note
  that by \eqref{eq:E:asy} we have
  \begin{equation*}
    |E_{\ellb} - 2E_\tau - E_{\ellb-2\tau}|\le Cn^{1/3}(\log n)^{1/6}.
  \end{equation*}
  Thus for large enough $n$,
  \begin{equation}\label{eq:tau:n_1}\begin{split}
   & \pr\left[L^R\big((\tau, \wb/2),
    (\ellb-\tau, \wb/2)\big)\ge E_{\ellb} + 10\eps^{\frac23}n^{\frac13}( \log
  n)^{\frac56} - 2E_\tau + 4\mathcal{C}n^{\frac13}(
    \log n)^{\frac23+2\delta +\delta'}\right] \\
    &\qquad\ge \pr\left[L^R\big((\tau, \wb/2),
    (\ell^{(b)}-\tau, w^{(b)}/2)\big)\ge E_{\ellb-2\tau} + 13 \eps^{\frac23}n^{\frac13}( \log
  n)^{\frac56}\right]
 \end{split} \end{equation}
 and we focus our attention on the right hand side. We split the $t$-interval
 $(\tau,\ellb-\tau)$ into $\big[\sqrt{\log n}\big]$ intervals of length $n$ each and observe
 \bes
 L^R\big((\tau, \wb/2), (\ellb-\tau, \wb/2)\big) \ge \sum_{i=0}^{[\sqrt{\log n}]-1}L^R\big((\tau + in, \wb/2), (\tau +(i+1)n, \wb/2)\big).
 \ees
 \begin{center}
\begin{figure}[htbp]
   \centering
   \includegraphics[width=400pt]{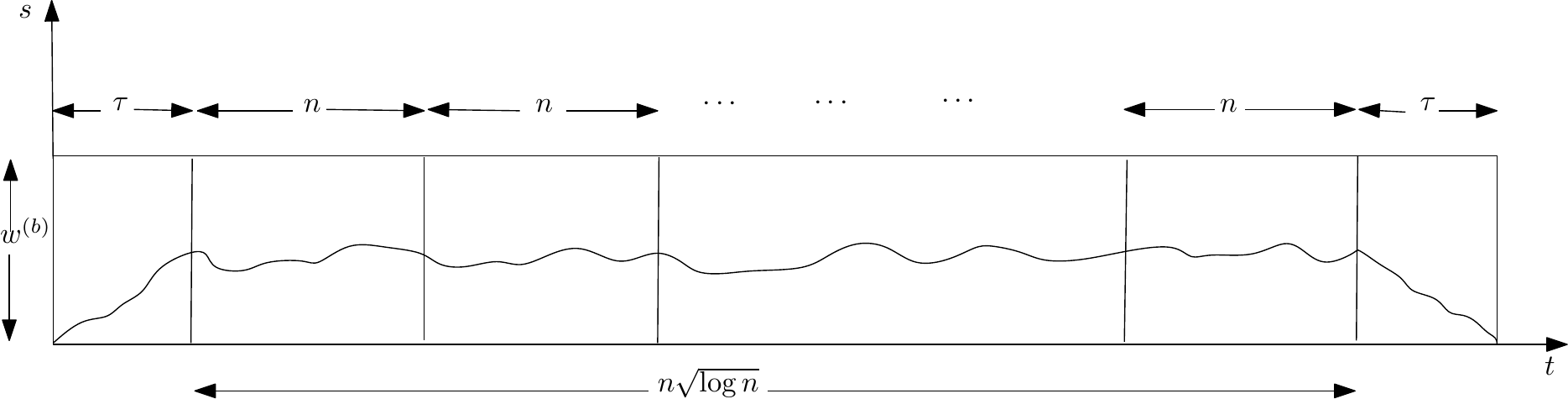}
   \caption{Construction of a long path in $A_1$}
\end{figure}
\end{center}
 Using independence between different blocks,
 the right hand side of \eqref{eq:tau:n_1} is bounded below by
 \bes
 \left(\pr \left[L^R\big((\tau , \wb/2), (\tau +n, \wb/2)\big) \ge \frac{E_{\ellb-2\tau}}{\big[\sqrt{\log n}\big]} + 14\eps^{\frac23} n^{\frac13} (\log n)^{\frac13}\right]\right)^{[\sqrt{\log n}]}.
 \ees
It is an easy computation to check that $E_{\ellb-2\tau} -\big[\sqrt{\log n}\big] E_n= O\big(n^{\frac13}\sqrt{\log n}\big)$
and thus for large $n$ we have a lower bound of
 \bes
 \left(\pr \left[L^R\big((\tau , \wb/2), (\tau +n, \wb/2)\big) \ge E_n + 15\eps^{\frac23} n^{\frac13} (\log n)^{\frac13}\right]\right)^{[\sqrt{\log n}]}.
 \ees
 By the transversal fluctuations Lemma~\ref{lem:tr_fl}, we may lift the restriction
  of staying in $R$ from the last probability to get a lower bound of
  \begin{equation*} \begin{split}
    &\Big\lbrace\pr \left[L\big((\tau , \wb/2), (\tau +n, \wb/2)\big) \ge E_n + 15\eps^{\frac23} n^{\frac13} (\log n)^{\frac13}\right] -\exp\big(-c(\log n)^{1+3\delta}\big)\Big\rbrace^{[\sqrt{\log n}]} \\
&\ge \left\lbrace\exp\left(-C\eps (\log n)^{\frac12}\right)-\exp\big(-c(\log n)^{1+3\delta}\big)\right\rbrace^{[\sqrt{\log n}]} \ge \frac{1}{n^{C\eps}}
  \end{split}\end{equation*}
 for large $n$, where we used \eqref{eq:up:tail} in the last line.\end{proof}
We can now combine all the lemmas above to prove Theorem~\ref{thm:Om}.
\begin{proof}[Proof of Theorem~\ref{thm:Om}] Recall the events in \eqref{eq:a12}, \eqref{eq:a34} and \eqref{eq:a'34}. We have that
\begin{equation} \label{eq:contain}
  \Omega\supseteq B_0\cap A_1\cap A_2\cap \tilde{A}_3\cap
  \tilde{A}_4.
\end{equation}
Here $\tilde{A}_3$ is the event $A_3\cap A_{3}'$ and the event $\tilde{A}_4 =\bigcap_{s'}\big(A_{4,s'}' \cap A_{4,s'}'\big)$, the intersection being over a discretization of $0\le s'\le \wb$ spaced distance $[\ell^{(b)}]^{-1}$ apart. The event $B_0$ is that each of the
strips of width $[\ell^{(b)}]^{-1}$ and length $\ell^{(b)}$ has at most $[\log \ell^{(b)}]^2$ points. One can bound the probability of this event just as in Lemma \ref{lem:grid}.
 Using \eqref{eq:a4} one obtains $\pr\big(\tilde{A}_4^c\big) \le \frac{1}{n^{D}}$
for some large $D$ since there are only $w^{(b)} \ell^{(b)}$ points in the discretization.

The reason for the
containment  \eqref{eq:contain} is that under $A_1$ there is a long (point to point, restricted) LIP from $(0,0)$ to
$(\ellb,0)$. Note here that since $B_0$ occurs, one can assume that this path passes through one of the discretization points $s'$, since the loss in length is comparitively small. As $\tilde{A}_3$ occurs, the path does not have a long subpath
from $(0,0)$ to $(\tau, s)$ for any $s$ or from $(\ellb-\tau, s)$ to
$(\ellb,0)$ for any $s$. Then, as $A_2$ occurs, there is no similarly
long LIP from $(0,\wb)$ to $(\ellb,\wb)$ which is disjoint from
the first LIP. However, as $\tilde{A_4}$ occurs, it is possible to
start from $(0,\wb)$, take an LIP to $(\tau, s')$ for some
suitable $s'$, continue along the portion of the LIP from $(0,0)$ to
$(\ellb,0)$ until some $(\ellb-\tau, s')$ for another suitable $s'$, and then take an
LIP to $(\ellb,\wb)$ to altogether obtain a very long LIP from
$(0,w)$ to $(\ellb,\wb)$. Thus $\Omega$ must occur.

 Using the probability estimates \eqref{eq:a3}, \eqref{eq:a4} and a similar bound as Lemma \ref{lem:grid} to bound $\pr(B_0)$ we get
\[\pr(B_0\cap\tilde A_3 \cap \tilde A_4) \ge 1-n^{-D}.\]
for all large enough $D$. On the other hand \eqref{eq:a2} and \eqref{eq:a1} give
\[\pr(A_2\cap A_1)\ge \pr(A_1)\cdot \left(1-n^{-c\epsilon}\right).\]
for all large $n$. Thus
\begin{equation*} \begin{split}
  \pr(\Omega) &= \pr(A_1\cap A_2)  + \pr(B_0\cap\tilde A_3 \cap \tilde A_4) -\pr\left((A_1\cap A_2)\cup (B_0\cap\tilde A_3 \cap \tilde A_4)\right) \\
  &\ge  \pr(A_1\cap A_2) +\pr(B_0\cap\tilde A_3 \cap \tilde A_4) -1\\
  &\ge n^{-c\epsilon}
\end{split}\end{equation*}
for some $c>0$ and all large $n$, as we wanted to prove.
\end{proof}

\section{Proof of Theorem~\ref{thm:var:lbd}} \label{sec:var:lbd}

 Fix $\delta, \delta'$ satisfying the conditions of Theorem~\ref{thm:Om} and
find $n$ so that $w=w^{(b)}= n^{\frac23}(\log n)^{\frac{1}{3}+\delta}$. Now
split the rectangle $R$ into basic blocks of length $\ell^{(b)}= n\big[\sqrt{\log n}\big] +2\tau$
where $\tau = \frac{n}{(\log n)^{\delta'}}$ as defined in \eqref{eq:tau:n1}, with the last block of (possibly) smaller length. We showed in
Section \ref{sec:merge} that in each block with probability at least $n^{-c\eps}$ there exists
a point $\mathbf{p}$ ({\it regeneration point}) such that the following holds: For all points
 $\mathbf{a}, \mathbf{b}$
on the left and right boundaries of the block one can find a path in $\cL^R(\mathbf{a},\mathbf{b})$
 connecting
$\mathbf{a}$ and $\mathbf{b}$ and passing through $\mathbf{p}$. The regeneration points $\mathbf{p}$ give us the required independence
in order to establish the lower bound on the variance. It is clear that
\bes
L^R  = \text{Sum of restricted LIP between regeneration points}.
\ees
We condition on $\mathcal{G}$, the $\sigma$-algebra generated by the Poisson points
 in alternate blocks $1,3,\cdots$. Choose one regeneration point with respect to each odd block ({\it odd regeneration point}), if there are any present. List the odd regeneration points as $\mathbf{p}_1, \mathbf{p}_2, \cdots$ so that $\mathbf{p}_1$ is closest to the left boundary of $R$, $\mathbf{p}_2$ is the second closest and so on.
  By the conditional
variance formula
\bes \begin{split}
\var(L^R) & = \E\big(\var(L^R | \mathcal{G}) \big) +\var\big(\E(L^R |\mathcal{G})\big)   \\
&\ge \E\Big[ \sum_i \var( L^R(i)| \mathcal{G}) \Big]
\end{split}
\ees
where $L^R(i)$ is the length of LIP between the $\mathbf{p}_i$ and $\mathbf{p}_{i+1}$. This is further
greater than
\bes
\E\Big[ \sum_{i \in \mathcal{I}} \var( L^R(i)| \mathcal{G}) \Big]
\ees
where $\mathcal{I}$ is the collection of all odd indices $i$ such that the $\mathbf{p}_i$ and $\mathbf{p}_{i+1}$
are in consecutive odd blocks. Since the probability of having a regeneration
point in a block is at least $n^{-\eps}$, we get that
\bes
\var(L^R) \ge \frac{\ell n^{-2\eps}}{5\ellb} \E\big[\var( L^R(i)| \mathcal{G})\big],
\ees
where $i$ is any odd index such that $p_i$ and $p_{i+1}$ are in consecutive odd blocks. The rest of the argument finds a lower bound for $\var( L^R(i)| \mathcal{G})$.

It is clear that
\bes
L^R(i) \le L^R_1(i)+L^R_2(i)+L^R_3(i)
\ees
where $L^R_1(i)$ is the restricted LIP from $\mathbf{p}_i $ to the right side of the block
containing $\vp_i$, $L^R_2(i)$ is the restricted LIP in the even block separating the odd blocks
and $L^R_3(i)$ is the restricted LIP between the left side of the block containing $\vp_{i+1}$
and $\vp_{i+1}$.
\begin{figure}[htbp]
   \centering
   \includegraphics[width=400pt]{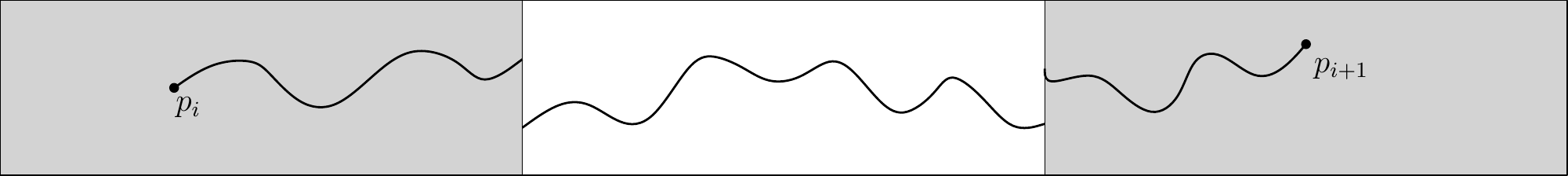}
   \caption{Picture illustrating $L^R_1, L^R_2, L^R_3$}
   \label{fig:L3}
\end{figure}

We have from the upper bound in \eqref{eq:e:lr} (with the rectangle
of length $\ell^{(b)}$ and width $w^{(b)}$) that
\begin{align}\label{eq:med:up}
\E L^R_2(i) \le \sqrt 2 \ellb-\frac{\ellb}{\wb(\log \wb)^{\frac53-\delta}}  \le E_{\ellb}+ 2 c_1 (\ellb)^{\frac13} = E_{\ellb}+ 3c_1 n^{\frac13}(\log n)^{\frac16}
\end{align}
from which it follows that
\begin{align} \label{eq:lri:up} \begin{split}
\E \big(L^R(i)\vert \mathcal{G}\big) &= L^R_1(i)+\E\big(L^R_2(i)\big) +L^R_3(i)\\
&\le L^R_1(i)+E_{\ellb}+ 3c_1 n^{\frac13}(\log n)^{\frac16} +L^R_3(i).
\end{split}
\end{align}
The argument used to prove \eqref{eq:a1} also shows that
\bes\label{eq:med:low}
\pr\left(L^R(\mathbf{c},\mathbf{d})\ge E_{\ellb}+\eps^{\frac23}n^{\frac13} (\log n)^{\frac56} \right) \ge n^{-C\eps}
\ees
where $\mathbf{c}$ and $\mathbf{d}$ are arbitrary points on the opposite sides of the even rectangle.
By taking $\mathbf{c}$ (resp. $\mathbf{d}$) to be the points at which the LIP from $\vp_i$ (resp. $\vp_{i+1}$)
hit the left (resp. right) sides of the even block, one gets
\begin{align}\label{eq:lri:low}
\pr\left(L^R(i) \ge L^R_1(i)+ E_{\ellb}+\eps^{\frac23}n^{\frac13} (\log n)^{\frac56}+L^R_3(i) \,\big\vert\, \mathcal{G} \right) \ge n^{-C\eps}
\end{align}
Combining \eqref{eq:lri:up} and \eqref{eq:lri:low} we get
\bes\begin{split}
\var(L^R) &\ge \frac{\ell n^{-2\eps}}{5\ellb} \E\big[\var( L^R(i)| \mathcal{G})\big]  \\
& \ge \frac{\ell n^{-2\eps}}{10\ellb}\cdot  n^{-C\eps}\cdot \eps^{\frac43}n^{\frac23} (\log n)^{\frac53} \ge \left(\frac{\ell}{w^{\frac12}}\right)^{1-C\eps}.
\end{split}\ees
This completes the proof of the theorem.
\qed
\section{Proof of Theorem~\ref{thm:mom:ubd}} \label{sec:mom:ubd}

  Fix $0<\delta<\frac16$ and find $n$ so that $w=n^{\frac23}(\log n)^{\frac13+\delta}$.
The proof is based on a recursive splitting of the rectangle $R$. We start with the rectangle $R$ of length $\ell_0=\ell$
and divide $R$ into three sub-rectangles, the second of which has length $n$, the first and last of length $\ell_1$
where $\ell_0=2\ell_1+n$. Next split both of the rectangles of size $\ell_1$ into three rectangles, the middle one of length
$n$ and the ones on the sides of the same length.  Perform this process recursively so that the lengths of the {\it side}
sub-rectangles are given by
\[\ell_i = 2\ell_{i+1}+n. \]
For a scale $\ell_i$, let $L^{R}(i)$ denote the length of the (restricted) maximal path in the left-most rectangle of length $\ell_i$, and let $L^{R}_1(i+1)$
and $L^{R}_2(i+1)$ denote the lengths of the (restricted) maximal paths of the sub-rectangles of size $\ell_{i+1}$ inside it.  Let $S$ denote
the middle rectangle of length $n$ and let $\vx$ (resp. $\vy$) denote an arbitrary point on the left (resp. right) boundary
of $S$.
It is clear that
\begin{align}\label{eq:l:ineq}
 L^R_1(i+1) +\min_{\vx,\vy} L^R(\vx,\vy) + L^R_2(i+1)   \le L^R(i) \le L^R_1(i+1) +\max_{\vx,\vy} L^R(\vx,\vy) + L^R_2(i+1)
\end{align}
Subtracting the means and moving terms we get from the above
\bes \begin{split}
&\min_{\vx,\vy}L^R(\vx,\vy) -\E \max_{\vx,\vy} L^R(\vx,\vy) \\
&\qquad  \le L^R(i)-\E L^R(i) -\big\{\big(L_1^R(i+1)- \E L_1^R(i+1)\big) +\big(L_2^R(i+1) - \E L_2^R(i+1)\big)\big\} \\
& \qquad \qquad  \le \max_{\vx,\vy}L^R(\vx,\vy) -\E \min_{\vx,\vy} L^R(\vx,\vy).
\end{split} \ees
An application of the triangle inequality then gives us
\begin{align}\label{eq:li:bd} \begin{split}
& \big\|L^R(i)-\E L^R(i) \big\|_{2k'} \\
& \quad \le  \big\|\big(L_1^R(i+1) - \E L_1^R(i+1)\big) +\big(L_2^R(i+1) - \E L_2^R(i+1)\big) \big\|_{2k'}  \\
&\qquad \quad+ 2 \big \| \max_{\vx,\vy}L^R(\vx,\vy) - \min_{\vx,\vy} L^R(\vx,\vy)\big\|_{2k'} + 2 \big\| \max_{\vx,\vy}L^R(\vx,\vy) - \E\max_{\vx,\vy} L^R(\vx,\vy)\big\|_{2k'} \\
&  \quad \le  \big\|\big(L_1^R(i+1) - \E L_1^R(i+1)\big) +\big(L_2^R(i+1) - \E L_2^R(i+1)\big) \big\|_{2k'}  + C(\delta, k') \cdot n^{\frac13} (\log n)^{\frac23+2\delta}
\end{split}
\end{align}
where we used Proposition \ref{prop:Delta:bd} and Lemma \ref{lem:xns:bd}
in the last step.

We
note now that $L_1^R(i+1)$ and $L_2^R(i+1)$ are independent and this will be useful
for us in decomposing the first term.
The above inequality is the main ingredient in the proof of the theorem. We shall
use it recursively until the rectangles in the last step are of size close to $n$.
The number of recursion steps will be
\[m = \left\lfloor \log_2\left(\frac{\ell}{n}\right) \right\rfloor - 1\]
so that
\[ \ell_m = \frac{\ell}{2^m}- n \left[\frac12+\frac{1}{2^2}+\cdots +\frac{1}{2^m}\right].\]
It is easy to see that $n \le \ell_m \le 4n$. We now claim

\begin{lemma} Fix $k\ge 1$. For $n$ large enough
\begin{align}\label{eq:claim}
\big\| L^R(i)- \E L^R(i)\big\|_{2k'} \le C(k) \cdot 2^{\frac{m-i}{2}} n^{\frac13} (\log n)^{\frac23+2\delta+k'}\quad  \text{ for } 1\le k'\le k,\, 0\le i\le m.
\end{align}
\end{lemma}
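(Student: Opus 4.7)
\emph{Plan.} Proceed by downward induction on $i$ from $i=m$ to $i=0$, simultaneously for all $1 \le k' \le k$. The factor $2^{(m-i)/2}$ arises naturally as $(\sqrt 2)^{m-i}$, matching a gain of $\sqrt 2$ per recursive step.

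\emph{Base case.} At $i=m$, since $n \le \ell_m \le 4n$, Lemma~\ref{lem:xns:bd} gives
\bes
\|L^R(m) - \E L^R(m)\|_{2k'} \le C(k,\delta)\,\frac{w^2}{\ell_m} \le C(k,\delta)\,n^{1/3}(\log n)^{2/3+2\delta},
\ees
which is absorbed by the target at $i=m$ thanks to the extra $(\log n)^{k'}$ factor, provided $C(k)$ is taken sufficiently large.

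\emph{Inductive step.} Assuming the bound at level $i+1$ for all $1\le k''\le k$, apply \eqref{eq:li:bd}. The centered variables $X_j := L^R_j(i+1)-\E L^R_j(i+1)$, $j=1,2$, are i.i.d.~mean-zero: independent, since they are determined by the Poisson process on disjoint sub-rectangles, and identically distributed by translation invariance. Expanding,
\bes
\E(X_1+X_2)^{2k'} = 2\E X_1^{2k'} + \sum_{j=2}^{2k'-2} \binom{2k'}{j}\,\E X_1^j\,\E X_1^{2k'-j}.
\ees
For $k'=1$ the sum is empty and $\|X_1+X_2\|_2 = \sqrt 2\,\|X_1\|_2$ exactly. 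For $k'\ge 2$, bound each $\E X_1^j$ (for $2 \le j \le 2k'-2$) using the IH at moment level $\lceil j/2\rceil$, with log-convexity of $L^p$-norms handling odd $j$. A direct arithmetic check shows that the $\log n$-exponent of each middle term $\E X_1^j\,\E X_1^{2k'-j}$ is smaller than that of $\E X_1^{2k'}$ by at least $4(k'-1)$ (worst case at $j\in\{2,2k'-2\}$), so the middle contributions are $o(1)$ relative to the leading term. Hence
\bes
\|X_1+X_2\|_{2k'} \le 2^{1/(2k')}(1+o(1))\,\|X_1\|_{2k'} \le \sqrt 2\,(1+o(1))\,\|X_1\|_{2k'},
\ees
using $2^{1/(2k')} \le \sqrt 2$ for all $k'\ge 1$.

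\emph{Closing and main obstacle.} Combining with the IH on $\|X_1\|_{2k'}$ and adding back the additive error from \eqref{eq:li:bd} yields the inductive bound at level $i$, provided the multiplicative $(1+o(1))$ factor per step and the accumulated additive error can both be absorbed into $C(k)$. Since $m = O(\log(\ell/n)) = O(\log n)$ and $o(1) = O((\log n)^{-1})$, we have $(1+o(1))^m = O(1)$; the additive errors form a geometric series contributing only an $O(1/(C(k)\log n))$ fraction of the main term. Both can be absorbed by choosing $C(k)$ large enough initially. The main obstacle is keeping the per-step multiplicative factor at exactly $\sqrt 2\,(1+o(1))$; plain moment inequalities such as Rosenthal or Marcinkiewicz--Zygmund would produce a strictly larger constant for $k'\ge 2$, which compounds to $\mathrm{const}^{\,m}$ over $m=O(\log n)$ iterations and destroys the bound. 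Exploiting the IH at \emph{all} moment levels $k''\le k'$ simultaneously, via the $(\log n)^{k''}$-gaps between the IH bounds, is precisely what allows the multiplicative factor to be reduced to $\sqrt 2\,(1+o(1))$ at each step.
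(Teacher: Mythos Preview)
Your approach matches the paper's: downward recursion via \eqref{eq:li:bd}, base case from Lemma~\ref{lem:xns:bd}, binomial expansion of $\E(X_1+X_2)^{2k'}$ with the hypothesis applied at all lower moment levels, and the log-exponent gap of $4(k'-1)$ for the extreme middle terms --- all of this is correct and is what the paper does.

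The closing argument, however, has a genuine gap. You assert $m = O(\log(\ell/n)) = O(\log n)$, but Theorem~\ref{thm:mom:ubd} (and hence this lemma) must hold uniformly for all $\ell$ satisfying only $w/(\log w)^{1/2}\le\ell^{2/3}$; there is no upper bound on $\ell$, so $m=\lfloor\log_2(\ell/n)\rfloor-1$ can be arbitrarily large relative to any power of $\log n$. The product $(1+o(1))^m$ then need not be bounded, and your plan of absorbing the compounded $(1+o(1))$ factor into $C(k)$ at the end fails.

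The paper avoids this by never carrying a per-step multiplier above $\sqrt 2$. For $k'=1$ it iterates \eqref{eq:li:bd} directly (per-step factor exactly $\sqrt 2$) and sums the geometric series of additive errors; the extra $(\log n)^{1}$ in the target absorbs the constant from that series, uniformly in $m$. For $k'\ge 2$ it bounds the full expansion by $2^{3/2}$ times the leading IH term, so that after the $(2k')$-th root the per-step multiplier is $2^{3/(4k')}<\sqrt 2$ \emph{strictly}; the fixed gap $\sqrt 2 - 2^{3/(4k')}>0$ (depending only on $k'$) absorbs the additive error at every single step, and a genuine induction with fixed $C(k)$ closes uniformly in $m$. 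Your own intermediate bound $2^{1/(2k')}(1+o(1))$ already gives this for $k'\ge 2$ and $n$ large --- you simply discarded the strict inequality by weakening to $\sqrt 2\,(1+o(1))$.
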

\begin{proof}
Let us first consider the case $k'=1$. In this case \eqref{eq:li:bd} becomes
\begin{align} \label{eq:li:bd:2}
\big\|L^R(i)-\E L^R(i) \big\|_{2} \le \sqrt 2 \big\|L^R(i+1)-\E L^R(i+1) \big\|_{2}+ C(\delta)\cdot n^{\frac13} (\log n)^{\frac23+2\delta}.
\end{align}
Proposition \ref{prop:Delta:bd} and Lemma \ref{lem:xns:bd} continue to be valid with length of rectangle $\ell_m$ and width $w=n^{\frac23}(\log n)^{\frac13+\delta}$ (we leave this as an exercise for the reader) and so
\bes
\big\|L^R(m)-\E L^R(m) \big\|_{2} \le C(\delta)\cdot n^{\frac13} (\log n)^{\frac23+2\delta}.
\ees
Using \eqref{eq:li:bd:2} recursively we get
\bes
\big\|L^R(i)-\E L^R(i) \big\|_{2} \le  C(\delta)\cdot n^{\frac13} (\log n)^{\frac23+2\delta} \left[1+ \sqrt 2+ \cdots + \left(\sqrt 2\right)^{m-i}\right]
\ees
which implies \eqref{eq:claim} for $k'=1$.

We next consider $k'\ge 2$.
We prove this by backwards induction on $i$. As above the claim is true for $i=m$ by
an application of Proposition \ref{prop:Delta:bd} and Lemma \ref{lem:xns:bd}.
So now suppose it is true for $i+1$. Call
\bes
\begin{split}
X(i+1)&= L_1^R(i+1) - \E L_1^R(i+1),\\
Y(i+1) &= L_2^R(i+1) - \E L_2^R(i+1).
\end{split}
\ees
By the induction hypothesis and the independence of $X(i+1)$ and $Y(i+1)$
 we have for large $n$
\bes\begin{split}
&\E \left[\left\{X(i+1)+Y(i+1)\right\}^{2k'}\right] \\
&= \E \left[X(i+1)^{2k'}\right] + \frac{2k' (2k'-1)}{2} \E \left[X(i+1)^{2k'-2}\right]\cdot \E \left[Y(i+1)^{2}\right]  + \cdots + \E \left[Y(i+1)^{2k'}\right]  \\
& \le \left\{ 2^{\frac{m-(i+1)}{2}} n^{\frac13} (\log n)^{\frac23+2\delta+k'} \right\}^{2k'} \\
&\qquad+ \frac{2k'(2k'-1)}{2}\left\{ 2^{\frac{m-(i+1)}{2}} n^{\frac13} (\log n)^{\frac23+2\delta+k'-1} \right\}^{2k'-2} \left\{ 2^{\frac{m-(i+1)}{2}} n^{\frac13} (\log n)^{\frac23+2\delta +1} \right\}^{2} \\
&\hspace{4cm} + \cdots + \left\{ 2^{\frac{m-(i+1)}{2}} n^{\frac13} (\log n)^{\frac23+2\delta+k'} \right\}^{2k'} \\
& \le 2^{\frac32} \cdot \left\{ 2^{\frac{m-(i+1)}{2}} n^{\frac13} (\log n)^{\frac23+2\delta+k'} \right\}^{2k'}.
\end{split}\ees
Plugging
this in \eqref{eq:li:bd} completes the induction step, proving our claim \eqref{eq:claim}.
\end{proof}

The proof of the theorem follows
by putting $i=0$ and substituting for $\ell$ and $w$ in \eqref{eq:claim}.
\qed

\section{Proofs of Theorem~\ref{thm:clt} and Theorem~\ref{thm:strip}} \label{sec:CLT}
 Fix $0<\delta < 1/12$ and choose $n$ so that $w = w(\ell) = n^{\frac23}(\log n)^{\frac13+\delta}$.
We divide the rectangle into alternating {\it long} and {\it short} blocks, starting with a long block, where the long blocks
 have length $\ell^a$ and the short
blocks have length $n$. The last block would have the remaining length. Here $a$ is chosen so that
\bes
\frac{2+3\gamma}{4} < a< 1.
\ees
The number of long and short blocks is $m=\lfloor \ell/(\ell^a+n)\rfloor$. Since $w\le \ell^\gamma$ we have
\bes
c\ell^{1-a} \le m \le  C\ell^{1-a}
\ees
for some positive constants $c, C$. Let $X_1, X_2,\cdots, X_m$ denote the maximal lengths of the
(restricted) paths
in the consecutive long boxes.
Let $\vx_i, \vy_i$ be arbitrary left and right endpoints in the $i$th short box.  By an argument similar to \eqref{eq:l:ineq}
one can conclude
\bes
\sum_{i=1}^m  \min_{\vx_i, \vy_i} L^R(\vx_i, \vy_i) \le L^R - \sum_{i=1}^m X_i \le \sum_{i=1}^m \max_{\vx_i,\vy_i} L^R(\vx_i, \vy_i).
\ees

It follows from this that for any fixed $k\ge 1$,
\begin{align}\label{eq:l-x}
\begin{split}
\left\| \left[L^R- \E L^R \right] -\sum_{i=1}^m \left[ X_i - \E X_i\right]\right\|_k
& \le \sum_{i=1}^m \left\|  \max_{\vx_i,\vy_i} L^R(\vx_i, \vy_i) - \E  \max_{\vx_i,\vy_i} L^R(\vx_i, \vy_i) \right\|_k \\
& \qquad + \sum_{i=1}^m \left\| \max_{\vx_i,\vy_i} L^R(\vx_i, \vy_i) -  \min_{\vx_i,\vy_i} L^R(\vx_i, \vy_i) \right\|_k \\
& \le C(\delta,k)\cdot m  n^{\frac13} (\log n)^{\frac23+2\delta}
\end{split}
\end{align}
for $n$ large enough, by an application of Proposition \ref{prop:Delta:bd} and Lemma \ref{lem:xns:bd}.
It also follows by an application of the triangle inequality that
\begin{equation}\label{eq:std:lnr}
\left|  \, \sqrt{\var (L_N^R)} - \sqrt{\sum_{i=1}^m\var ( X_i)}  \,\right| \le C(\delta)\cdot m  n^{\frac13} (\log n)^{\frac23+2\delta}.
\end{equation}
We need the following lemma to complete the proof of Theorem \ref{thm:clt}. The lemma shows that the sum of the $X_i$'s satisfy a Gaussian limit theorem.
\begin{lemma} \label{lem:clt:x} With the notation as above we have as $\ell \to \infty$:
\begin{align}
\frac{\sum_{i=1}^m \left[ X_i - \E X_i \right]}{\sqrt{\sum_{i=1}^m \var(X_i)}} \Rightarrow N(0,1).
\end{align}
\end{lemma}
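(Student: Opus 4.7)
The plan is to apply the Lyapunov central limit theorem for triangular arrays. First I would observe that, because the $m$ long blocks are pairwise disjoint (they are separated by the short blocks), the random variables $X_1,\ldots,X_m$ are measurable with respect to the Poisson points in disjoint regions of the plane, hence independent; as all long blocks are congruent diagonal rectangles of length $\ell^a$ and width $w$, the $X_i$ are in fact i.i.d.\ for each fixed $\ell$. The CLT for the normalised sum will then follow once I verify the Lyapunov condition for some $\delta>0$:
\[
\frac{\E\abs{X_1-\E X_1}^{2+\delta}}{m^{\delta/2}\,(\var X_1)^{(2+\delta)/2}}\longrightarrow 0\qquad \text{as }\ell\to\infty.
\]

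I would work with $\delta=2$ and feed in the moment bounds already proved. Before doing so I must check that the hypotheses of Theorem~\ref{thm:var:lbd} and Theorem~\ref{thm:mom:ubd} are both available for a rectangle of length $\ell^a$ and width $w$: up to logarithmic factors both demand $w\le \ell^{2a/3}$, which in view of $w\le \ell^\gamma$ reduces to $\gamma\le 2a/3$, i.e.\ $a\ge 3\gamma/2$. The choice $a>(2+3\gamma)/4$ implies $a>3\gamma/2$ precisely because $\gamma<2/3$ (an easy rearrangement: $(2+3\gamma)/4 - 3\gamma/2=(2-3\gamma)/4>0$), so both theorems apply. They then yield, for arbitrarily small $\epsilon'>0$ and all sufficiently large $\ell$,
\[
\var X_1\ge \Bigl(\frac{\ell^a}{w^{1/2}}\Bigr)^{1-\epsilon'},\qquad
\norm{X_1-\E X_1}_4\le C\cdot\frac{\ell^{a/2}}{w^{1/4}}\cdot (\log w)^3.
\]

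Substituting these bounds into the Lyapunov ratio and using $m\asymp \ell^{1-a}$, a direct computation gives
\[
\frac{\E\abs{X_1-\E X_1}^4}{m\,(\var X_1)^{2}}\le \frac{C\,\ell^{2a}\,w^{-1}\,(\log w)^{12}}{\ell^{1-a}\cdot\ell^{2a(1-\epsilon')}\,w^{-(1-\epsilon')}}
= C\,\ell^{-(1-a)+2a\epsilon'}\,w^{-\epsilon'}\,(\log w)^{12}.
\]
Choosing $\epsilon'$ small enough that $2a\epsilon'<(1-a)/2$, and noting that $w^{-\epsilon'}\le 1$ for $w\ge 1$, the exponent of $\ell$ becomes strictly negative, so the ratio tends to $0$. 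This is exactly the Lyapunov condition and yields the asserted Gaussian limit.

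The main obstacle I foresee is not the CLT itself, which is standard once the moment estimates are in hand, but rather verifying that the hypotheses of both Theorem~\ref{thm:var:lbd} and Theorem~\ref{thm:mom:ubd} are simultaneously satisfied on the long blocks; this is the role of the constraint $a>(2+3\gamma)/4$ imposed at the outset of the proof (together with the strictness $\gamma<2/3$, which keeps the two-sided dimensional inequality $3\gamma/2<a<1$ nonempty).
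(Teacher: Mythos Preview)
Your proof is correct and follows essentially the same approach as the paper: both verify the Lyapunov/Lindeberg condition with fourth moments, feeding in the variance lower bound from Theorem~\ref{thm:var:lbd} and the fourth-moment upper bound from Theorem~\ref{thm:mom:ubd} for the long blocks, and then showing the resulting ratio is $\ell^{-(1-a)+O(\epsilon')}$ times polylogarithmic factors, which tends to zero since $a<1$. Your additional remark that the constraint $a>(2+3\gamma)/4$ together with $\gamma<2/3$ ensures $a>3\gamma/2$, so that the hypotheses of both theorems are met on the long blocks, is a useful detail that the paper leaves implicit.
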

\begin{proof} We check Lindeberg's condition for proving the central limit theorem. It is sufficient to check that
$\sum_{i=1}^m \E \left[T_i^4 \right] \rightarrow 0$ where
\bes
T_i = \frac{X_i - \E X_i }{\sqrt{\sum_{i=1}^m \var(X_i)}}.
\ees
Using Theorem~\ref{thm:mom:ubd} for the fourth moment and
 Theorem~\ref{thm:var:lbd} for the variance for $X_i$  one has
\bes
\left \|X_i -\E X_i \right\|_4  \le \frac{\big(\ell^a\big)^{\frac12}}{w^{\frac14}}(\log w)^{4}  \quad \text{ and }\quad
 \var(X_i)  \ge \left(\frac{\ell^a}{w^{\frac12}}\right)^{1-\eps}.
\ees
Therefore
\bes
\sum_{i=1}^m \E\left[T_i^4 \right] = \frac{1}{m} \cdot\frac{ \|X_1-\E X_1 \|_4^4}{ \var(X_i)^2} \le   \frac{\ell^{2ac\eps}(\log w)^{16}}{\ell^{1-a} w^{c\eps}}
\ees
which tends to $0$ because $a<1$ and $\eps$ is arbitrary.
\end{proof}
It is now not difficult to prove \eqref{eq:clt}.
Indeed \eqref{eq:l-x} and Theorem \ref{thm:var:lbd} gives
\bes
\left\| \frac{\left[L^R- \E L^R \right] -\sum_{i=1}^m \left[ X_i - \E X_i\right]}{\sqrt{\sum_{i=1}^m \var(X_i)}}\right\|_k
\le \frac{C(\delta,k) m n^{\frac13} (\log n)^{\frac23+2\delta}}{\sqrt{m \cdot \big(\ell^a / w^{\frac12} \big)^{1-\eps}}}
 \longrightarrow 0
\ees
since $a>\frac{(2+3\gamma)}{4}$ and $\eps>0$ is arbitrary. Also by \eqref{eq:std:lnr} and Theorem \ref{thm:var:lbd}
\bes
\sqrt{\frac{\var (L^R)}{\sum_{i=1}^m \var(X_i)} } \to 1.
\ees
This is more than enough to prove Theorem \ref{thm:clt}. \qed

Finally we present the proof of Theorem \ref{thm:strip} stated in the introduction.
\begin{proof}[Proof of Theorem~\ref{thm:strip}] For this proof, we switch back to the usual $(x,y)$ coordinate system.  Let $0<\eps<\frac{3\gamma}{2}$ so that $(1-\frac{\gamma}{2}) \cdot \frac{1-\epsilon}{2}> \epsilon +\frac{\gamma}{2}$. Consider the square $[0,n]^2$ and the three regions
$S_1$, $R$ and $S_2$ in the extended strip as shown below. The region $S_1$ is the rectangular region from the origin
to the first anti-diagonal line at a distance $\sqrt 2 n^{\frac{3\gamma}{2}-\eps}$. The region $S_2$
is the corresponding region on the top right. Thus the length of the middle diagonal rectangle is
$\sqrt 2 n - 2\sqrt 2 n^{\frac{3\gamma}{2}-\eps}$.
\begin{figure}[htbp]
   \centering
   \includegraphics[width=300pt]{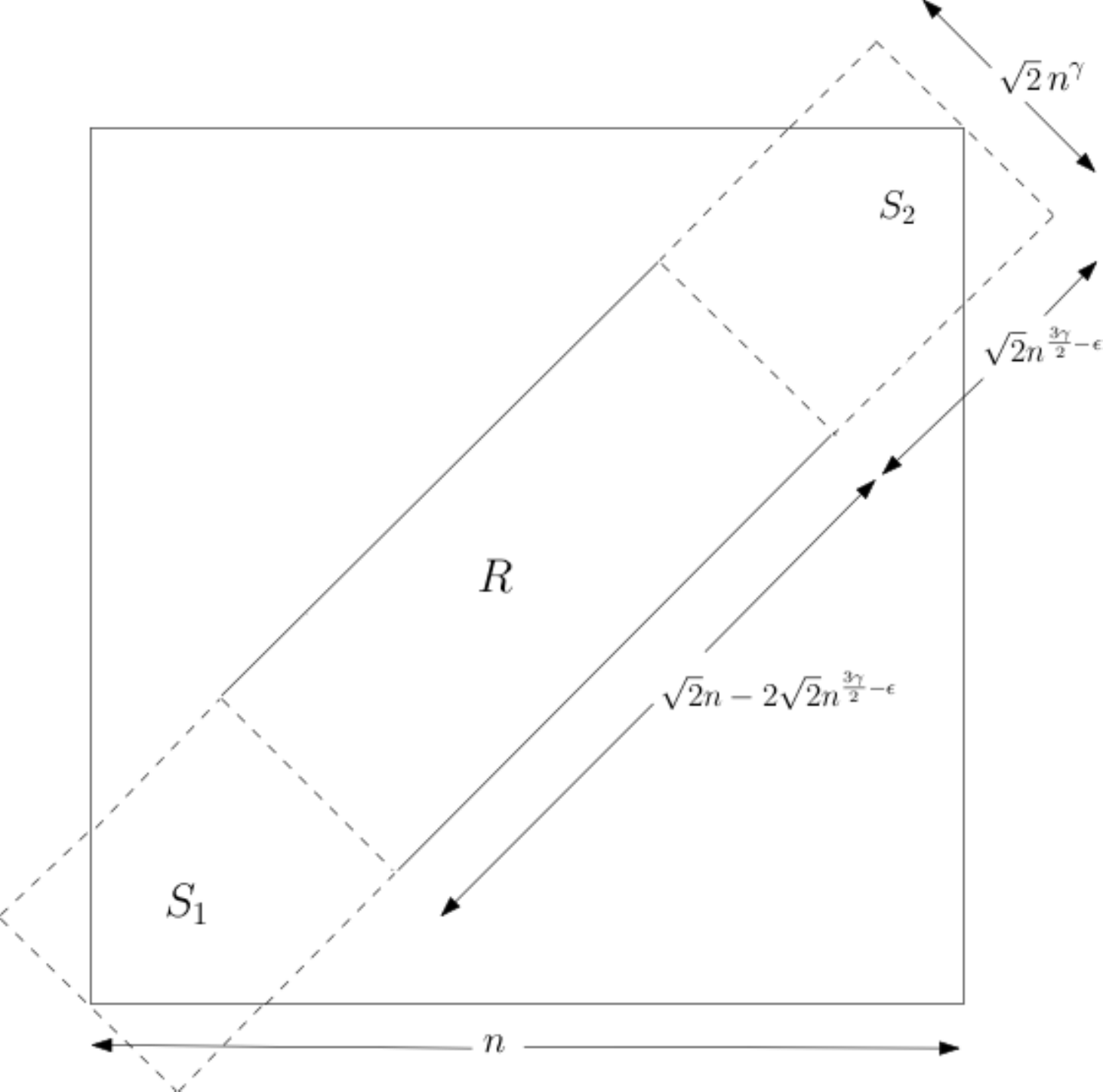}
   \caption{The regions $S_1, R$ and $S_2$.}\label{fig:partition of strip for main theorem}
  \end{figure}
Clearly we have
\be \label{eq:lng}
\min_{\vx_1,\vy_1} L^{S_1} \big(\vx_1, \vy_1\big) + L^R + \min_{\vx_2, \vy_2} L^{S_2} \big(\vx_2, \vy_2\big) \le L_n^{(\gamma)} \le  \max_{\vx_1, \vy_1} L^{S_1} \big( \vx_1, \vy_1\big) + L^R + \max_{\vx_2, \vy_2} L^{S_2} \big(\vx_2,  \vy_2\big).
\ee
The points $\vx_1, \vy_1$ are arbitrary points on the left and right boundaries of $S_1$ and similarly $\vx_2,\vy_2$ are arbitrary boundary points for $S_2$. In particular this gives us by an application of Proposition \ref{prop:Delta:bd}
\bes
 \left \vert \E L_n^{(\gamma)} - \E L^R - \E L^{S_1} \big(\mathbf{0}, (n^{\frac{3\gamma}{2}-\eps},n^{\frac{3\gamma}{2}-\eps}) \big)- \E L^{S_2}\big( (n-n^{\frac{3\gamma}{2}-\eps}, n-n^{\frac{3\gamma}{2}-\eps}), (n,n)\big)\right \vert \le  C n^{\eps +\frac{\gamma}{2}}.
\ees
The first assertion in \eqref{eq:e:var} follows from this, \eqref{eq:e:lr} and \eqref{eq:E:asy} . Using \eqref{eq:lng} and applying Proposition \ref{prop:Delta:bd}
and Lemma \ref{lem:xns:bd} we get
\bes
\left \vert \sqrt{\var L_n^{(\gamma)}} -\sqrt{\var L^R}\right\vert  \le C n^{\eps+\frac{\gamma}{2}}.
\ees
The second assertion in \eqref{eq:e:var} follows from this. This is because of Theorem \ref{thm:mom:ubd} and the bound $\sqrt{\var L^R}\gg n^{\eps+\frac{\gamma}{2}}$, which follows from Theorem \ref{thm:var:lbd} and our choice of $\eps$. For the final statement \eqref{eq:gauss:lim} note that
\bes
\frac{L_n^{(\gamma)} -\E L_n^{(\gamma)}}{\sqrt{\var L_n^{(\gamma)}}} = \sqrt{\frac{\var L^R}{\var L_n^{(\gamma)}}} \cdot  \left[\frac{L^R-\E L^R}{\sqrt{\var L^R}} +\frac{\mathcal{E}}{\sqrt{\var L^R}}\right].
\ees
Here the error term $\mathcal{E}$ is of order $n^{\eps+\frac{\gamma}{2}}$ by \eqref{eq:lng}, Proposition \ref{prop:Delta:bd} and Lemma \ref{lem:xns:bd}, and hence small with respect
to $\sqrt{\var L^R}$. The above argument also shows that $\var L^R /\var L_n^{(\gamma)} \to 1$
as $n\to \infty$. This gives the central limit theorem for $L_n^{(\gamma)}$.
\end{proof}

\section{Open problems} \label{sec:open}
In this section, we collect a few questions which are open.
\begin{enumerate}
\item \label{var:qn} Can one improve the results in Theorem \ref{thm:var:lbd} and Theorem \ref{thm:mom:ubd} and get
a more precise result for  $\var(L^R)$ in rectangles $R$ of length $\ell$ and width $1\ll w\ll \ell^{\frac23}$ ?
In particular is it true that
\bes
\var(L^R) = c \frac{\ell}{w^{\frac12}} \big(1+o(1)\big)
\ees
as $\ell \to \infty$ for an appropriate constant $c>0$?
\item Considering Theorem \ref{thm:exp}, can one get sharper results for $\E L^R$ for a diagonal rectangle $R$ of
length $\ell$ and width $1\ll w \ll \ell^{\frac23}$?  Do there exist positive constants $c_1, c_2$ such that
\bes
\E(L^R) = \sqrt 2 \ell - c_1 \frac{\ell}{w} + c_2 \frac{\ell^{\frac12}}{w^{\frac14}} \big( 1+ o(1) \big)
\ees
as $\ell \to \infty$? Note that $\ell^{\frac12} / w^{\frac14}$ is our prediction in \ref{var:qn} for the standard deviation.
\item Theorem \ref{thm:strip} gives a Gaussian limit for $L_n^{(\gamma)}$ when $\gamma<\frac23$ whereas the results of
Baik, Deift and Johansson \cite{baik-deif-joha-99} imply a Tracy-Widom limit for $L_n^{(\gamma)}$ when $\gamma>\frac23$. What is the limiting distribution
when we take the width of the strip to be $\alpha\cdot n^{\frac23}$ for fixed $\alpha >0$?
\item As pointed out in the introduction, a generic $k$-point configuration in the square $[0,n]^2$ corresponds naturally
to a permutation $\pi$ of $\{1, 2, \cdots, k\}$. Our results then fit within the framework of studying random permutations
with a band structure, i.e. permutations where $| \pi(i)- i  | $ is typically much smaller than $n$. Other models of this type
include the interchange (or stirring) process on a finite segment (introduced in~\cite{toth}), the Mallows model (introduced in~\cite{mall}), the one-dimensional case of displacement-biased random permutations on the lattice~\cite{betz2014random, fyodorov2018band} and the model of $k$-min permutations~\cite{travers2015inversions}.

The longest increasing subsequence was analyzed in two of the above examples: the Mallows model~\cite{bhat-pele, basu2017limit} (drawing on~\cite{muel-star}) and the $k$-min permutation~\cite{travers2015inversions}. The results of these studies, however, are not as detailed as the ones obtained here in the sense that the second-order correction to the expectation, the order of magnitude of the variance and the limit law have not been determined (for band widths growing with the size of the permutation). It is natural to expect that the longest increasing subsequence in many random permutation models with a band structure exhibits similar behavior to the one obtained here and it is of great interest to obtain such results for a general class of models.

\end{enumerate}
\textbf{Acknowledgements:} We thank Lucas Journel for a careful reading of an earlier draft and for suggesting several improvements. We thank Eitan Bachmat for interesting discussions of related problems and application areas. Most of this work was completed while M.J. was at the University of Sheffield, and he thanks the School of Mathematics and Statistics for a supportive environment. The work of R.P. was supported in part by Israel Science Foundation grant 861/15 and the European Research Council starting grant 678520 (LocalOrder).

\bibliographystyle{plain}
\bibliography{lis}

\end{document}